\documentclass[10pt]{article}
\usepackage[english]{babel}
\usepackage{natbib}
\usepackage{url}
\usepackage{inputenc}
\usepackage{amsfonts}
\usepackage{amsmath}
\usepackage{empheq}
\usepackage{graphicx}
\usepackage{subcaption}
\graphicspath{{images/}}
\usepackage{parskip}
\usepackage{fancyhdr}
\usepackage{bbold}
\usepackage{amssymb}
\usepackage{vmargin}
\usepackage{array}
\usepackage{amsthm}
\usepackage[ruled,vlined]{algorithm2e}
\usepackage{hyperref}
\usepackage{caption}
\usepackage{float}
\usepackage{fourier-orns}
\usepackage{listings}
\usepackage{epsfig}
\usepackage{amsmath}             \usepackage[scr=boondoxo,scrscaled=1.05]{mathalfa}

\usepackage[shortlabels]{enumitem}
\mathtoolsset{showonlyrefs=true}

\usepackage{wasysym}
\usepackage{tikz}
\newcommand{\parallelogram}{%
  \tikz[scale=0.2, baseline=-0.5ex]{%
    \draw (0,0) -- (1,0) -- (1.5,1) -- (0.5,1) -- cycle;
  }%
}

\newtheorem{theorem}{Theorem}[section]
\newtheorem{corollary}{Corollary}[section]
\newtheorem{lemma}[theorem]{Lemma}
\newtheorem{remark}{Remark}[section]
\theoremstyle{definition}

\usepackage{listings}

\newcommand{\bydef}{\stackrel{\mbox{\tiny\textnormal{\raisebox{0ex}[0ex][0ex]{def}}}}{=}}

\usepackage[dvipsnames]{xcolor}

\makeatletter
\newcommand\subsubsubsection{\@startsection{paragraph}{4}{\z@}{-2.5ex\@plus -1ex \@minus -.25ex}{1.25ex \@plus .25ex}{\normalfont\normalsize\bfseries}}
\newcommand\subsubsubsubsection{\@startsection{subparagraph}{5}{\z@}{-2.5ex\@plus -1ex \@minus -.25ex}{1.25ex \@plus .25ex}{\normalfont\normalsize\bfseries}}
\makeatother
\title{Proving periodic solutions and branches in the 2D Swift Hohenberg PDE with hexagonal and triangular symmetry}
\author{
Dominic Blanco
\footnote{McGill University, Department of Mathematics and Statistics, 805 Sherbrooke Street West, Montreal, QC, H3A 0B9, Canada. {\tt dominic.blanco@mail.mcgill.ca}}}
\begin{document}

\maketitle
\begin{abstract}
    In this article, we enforce space group symmetries in Fourier series to rigorously prove the existence of smooth, periodic solutions in partial differential equations (PDEs) with hexagonal and triangular symmetries. In particular, we provide the necessary analytical and numerical tools to construct Fourier series of functions on the hexagonal lattice. This allows one to build approximate solutions that are periodic. Moreover, to generate the periodic tiling, we can use one symmetric hexagon for $D_6$ symmetry and two symmetric triangles for $D_3$ symmetry. We derive a Newton-Kantorovich approach based on the construction of an approximate inverse around an approximate solution, $\overline{u}$. More specifically, we verify a condition based on the computation of explicit bounds. The strategy for constructing $\overline{u}$, the approximate inverse, and the computation of these bounds will be presented. We demonstrate our approach on the 2D Swift-Hohenberg PDE by proving the existence of $D_3$ and $D_6$ periodic solutions. We then perform proofs of branches of solutions by using Chebyshev series. The algorithmic details to perform the proof can be found on Github.
    \end{abstract}

\begin{center}
{\bf \small Key words.} 
{ \small Periodic Solutions, Branches of Solutions, Swift Hohenberg,  Hexagonal Lattice Symmetries, Dihedral symmetry, Computer-Assisted Proofs}
\end{center}
\section{Introduction}
In this paper, we develop a methodology for constructively proving the existence (and local uniqueness) of solutions to partial differential equations (PDEs) with symmetries representable on the hexagonal lattice. More specifically, we will focus on $D_3$ and $D_6$-symmetric solutions. By $D_j$, we mean the dihedral group of order $2j$ which is the symmetry group of the $j$-gon. These groups have presentation
\begin{align}
    D_j = < r,s ~ | ~ r^{j} = s^2 = 1, ~ rs = sr^{-1} >.\label{def : Dj presentation}
\end{align}
We will illustrate such an approach in the case of the 2D Swift-Hohenberg (SH) PDE
\begin{align}\label{original SH}
    -u_t &= (I_d + \nabla^2)^2 u + \mu u - \gamma u^2 + u^3, ~~ u = u(x,t).
\end{align}
We wish to establish steady states of \eqref{original SH} with dihedral symmetries, namely
\begin{align}\label{eq : swift_hohenberg}
     &0 = (I_d + \nabla^2)^2 u + \mu u - \gamma u^2 + u^3, ~~ u = u(x),~ u ~ \text{is such that} ~ u(x) = u(g \cdot x) ~ \text{for} ~ \text{all} ~ g \in D_j.
\end{align}
\begin{figure}[t]
\centering
 \begin{minipage}{.33\linewidth}
  \centering\epsfig{figure=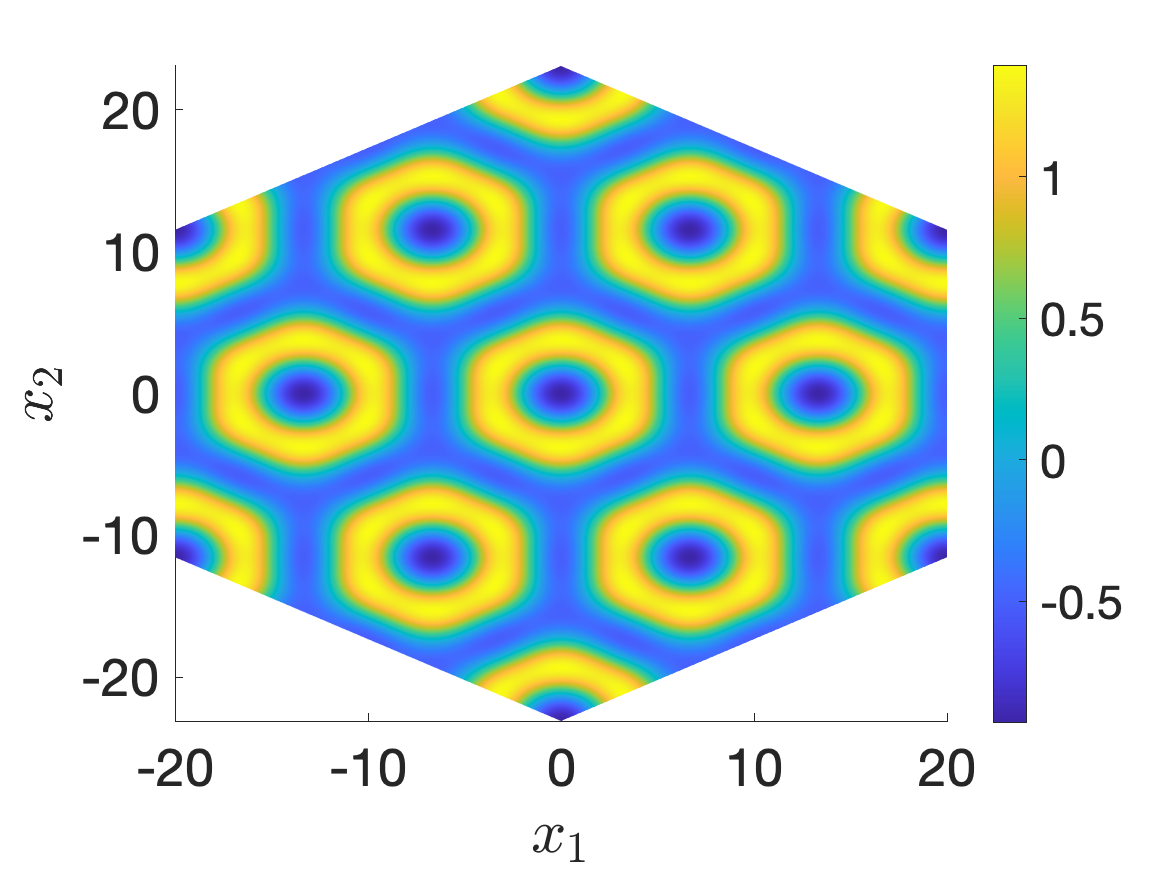,width=\linewidth}
  \end{minipage}%
 \begin{minipage}{.33\linewidth}
  \centering\epsfig{figure=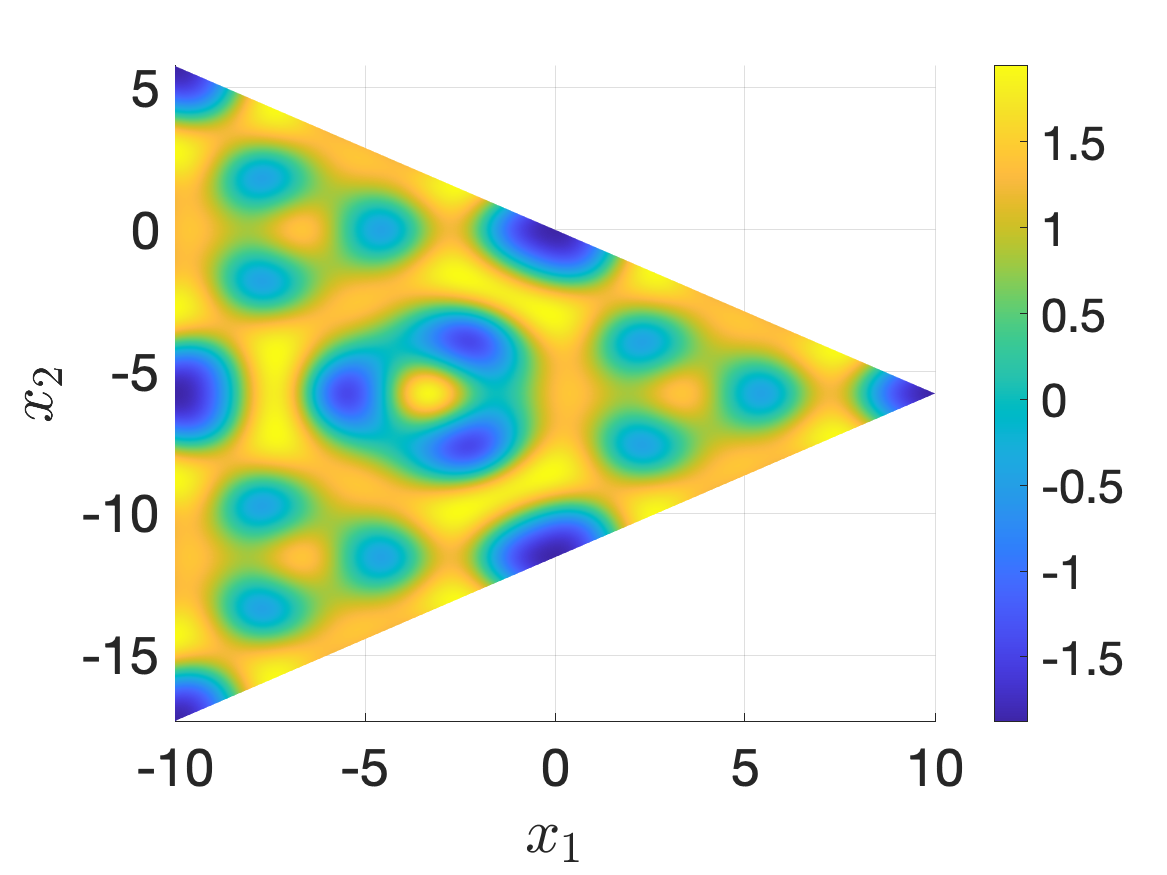,width=\linewidth}
 \end{minipage} 
 \begin{minipage}{.33\linewidth}
  \centering\epsfig{figure=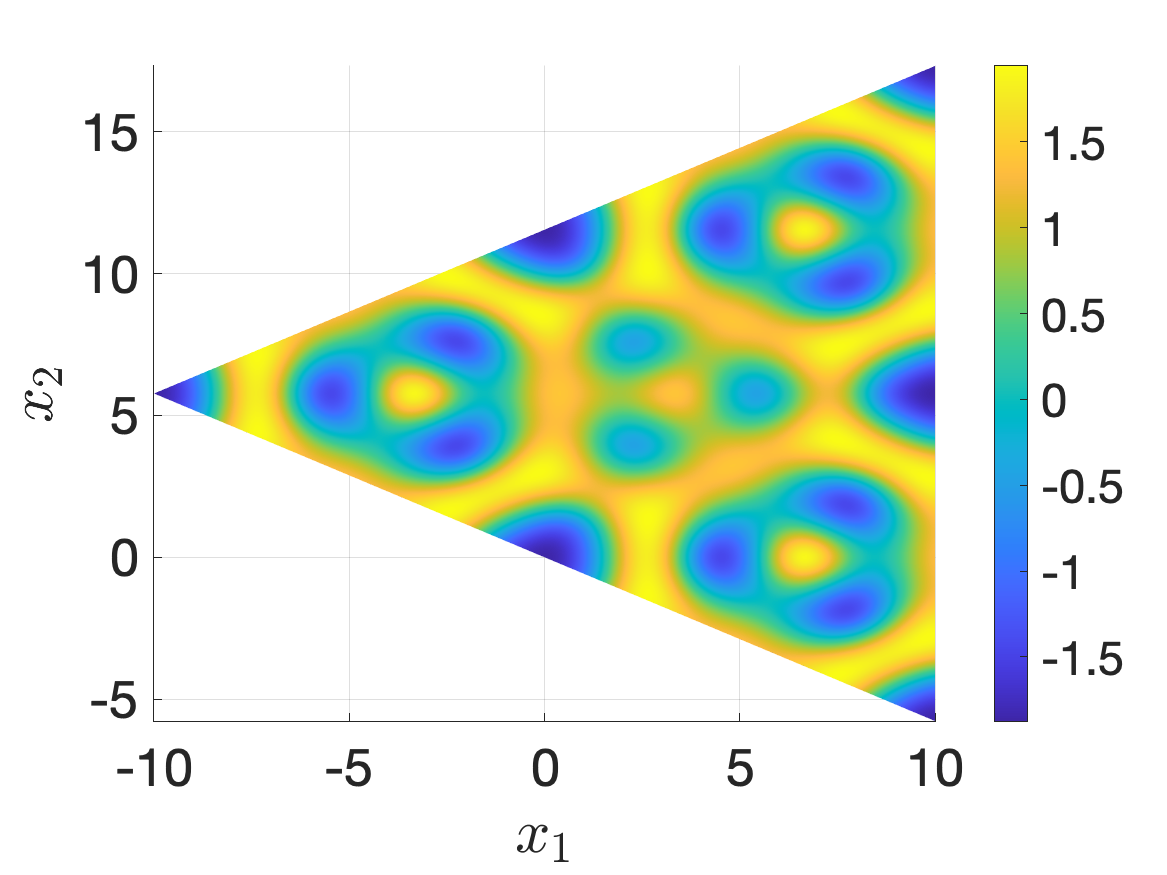,width=\linewidth}
 \end{minipage} 
 \caption{An approximate solution on the branch presented in Figure \ref{fig : th4_1} (green point) when $\mu \approx 0.01$ (left). An approximate solution on the branch presented in Figure \ref{fig : th3_1} (green point) when $\mu \approx -0.3$ on $\Delta_1$ (center) and $\Delta_2$ (right). See Theorem \ref{th : triangle periodic} for a definition.}\label{fig : intro}
 \end{figure}%
Note that we have set $u_t = 0$ as we are looking for steady states.
We wish to prove the existence, local uniqueness, and symmetry of solutions to \eqref{eq : swift_hohenberg}. The Swift-Hohenberg PDE has been investigated in a wide variety of studies. In particular, \eqref{eq : swift_hohenberg} is known to exhibit a rich variety of well documented dihedral solutions. Various numerical and analytical studies of SH have led to a deeper understanding of dihedral patterns such as hexagonal \cite{hexagon2021lloyd,hexagon2008} and square \cite{squareSakaguchi_1997} patterns. More generally, the works of \cite{jason_review_paper}, \cite{jason_spot_paper}, and \cite{jason_ring_paper} demonstrate an approach to numerically compute dihedral solutions. By using radial coordinates, the authors provide a method to construct approximate solutions using a Galerkin projection starting with $\mu$ small. By then performing continuation, one can obtain localized planar patterns of various dihedral symmetries. These can form both spot patterns (see \cite{jason_spot_paper}) and ring patterns (see \cite{jason_ring_paper}). Computer assisted proofs have been used in 2D and 3D to obtain existence results on rectangular domains in \cite{lessard_2,lessard_1}.

\par In this paper, we demonstrate how we can use symmetries on the hexagonal lattice for proving the existence of periodic solutions to \eqref{eq : swift_hohenberg}. As there is still a heavy interest in periodic solutions to \eqref{eq : swift_hohenberg}, we would like to provide a general approach to prove the solution and its symmetry for any value of $\mu$. Note that we will be able to prove solutions when $\mu < 0$ as we are interested in periodic solutions, and our approach will not require the linear part to be invertible. To begin, we will need a way to represent symmetries on the hexagonal lattice in Fourier series. We are interested in using approaches which reduce the computational complexity of the problem at hand. This is especially important as our problems will be posed in 2D, so performing proofs without any reduction would be difficult if not impossible. To see the benefit, consider even symmetry in 1D. When a periodic solution has even symmetry, it can be written as a Cosine Fourier series. Such a series only has positive Fourier coefficients, meaning the number of coefficients we need to store  in memory is nearly halved. For space groups in higher dimensions with more symmetries, the reduction becomes even greater. Even when memory is not a concern, enforcing the symmetry is of interest when one wishes to find a solution with a specific symmetry. Without enforcing it, there is no way to guarantee that the solution obtained has the desired symmetry. Such techniques on symmetric Fourier coefficients have been deeply developed in \cite{JB_symmetries_1} and \cite{JB_symmetries_2}. More specifically, the authors describe a way to build a Fourier series that possesses the desired symmetry.  The methods developed in \cite{JB_symmetries_1} and \cite{JB_symmetries_2} are applicable to any space group in any dimension. The authors use an algorithm to compute a reduced set of Fourier indices under the symmetry of a space group. Furthermore, the method allows one to determine the specific relations amongst the coefficients. In particular, it relies on computing a set which contains one element from each orbit (see \cite{gallianalgebra} for a definition) of the given space group. Such a set is called a fundamental domain, denoted $\mathcal{Z}_{\mathrm{dom}}(\mathcal{G})$. To demonstrate the use of $\mathcal{Z}_{\mathrm{dom}}(\mathcal{G})$, suppose $u$ is a Fourier series with Fourier coefficients $(u_n)_{n \in \mathbb{Z}^m}$. If an index, $n$, is in the fundamental domain, then the other elements in its orbit (in the sense of group action) will we related to one another. That is, for any $g \in \mathcal{G}$, we can find a relation between $u_n$ and  $u_{g \cdot n}$ where $\cdot$ denotes the group action. This relation already reduces the number of Fourier indices one must store. This is especially important when performing computer assisted proofs in higher dimensions, such as in \cite{van2021spontaneous}. Additionally, the authors of \cite{JB_symmetries_1} and \cite{JB_symmetries_2} define two additional sets. The first set, $\mathcal{Z}_{\mathrm{triv}}(\mathcal{G})$, is called the trivial set. This set contains all Fourier indices which have values of $0$ by symmetry. We emphasize that these are $0$ due to the symmetry itself. In other words, it is possible that the are other indices that are $0$ by the nature of the solution. Such $0$ indices are not included in the trivial set. The indices which are $0$ depend on the symmetry being enforced.  The second is defined as
\begin{align}
    \mathcal{Z}_{\mathrm{sym}}(\mathcal{G}) \bydef \mathbb{Z}^m \setminus \mathcal{Z}_{\mathrm{triv}}(\mathcal{G}).
\end{align}
Since the trivial set contains all indices corresponding to coefficients which are $0$, the second set, $\mathcal{Z}_{\mathrm{sym}}(\mathcal{G})$, contains all indices corresponding to coefficients which are non-zero. Hence, we would like to disregard the indices in $\mathcal{Z}_{\mathrm{triv}}$ numerically and only consider those in $\mathcal{Z}_{\mathrm{sym}}$. Once $\mathcal{Z}_{\mathrm{dom}}(\mathcal{G})$ and $\mathcal{Z}_{\mathrm{sym}}(\mathcal{G})$ are obtained, one can define the reduced set of Fourier coefficients
 \begin{align}
     \mathcal{Z}_{\mathrm{red}}(\mathcal{G}) \bydef \mathcal{Z}_{\mathrm{dom}}(\mathcal{G}) \cap \mathcal{Z}_{\mathrm{sym}}(\mathcal{G}).\label{def : reduced set}
 \end{align}
 Indeed, the relations determined by $\mathcal{Z}_{\mathrm{dom}}(\mathcal{G})$ and $\mathcal{Z}_{\mathrm{triv}}(\mathcal{G})$ allow one to write a Fourier series only featuring indices in $\mathcal{Z}_{\mathrm{red}}(\mathcal{G})$. Such a Fourier series will enforce the needed relations numerically. This allows one to build an approximate solution that is necessarily of the symmetry enforced. Using this approach in \cite{JB_symmetries_1}, the authors obtain rigorous proofs of existence and local uniqueness of periodic solutions to 3D Ohta-Kawasaki with a SG229 and SG230 symmetry. In \cite{JB_symmetries_2}, they generalize the approach and allow for one to attempt rigorous proofs for any space group symmetry. Since the rigorous approach used relies on a contraction argument, it can be shown that the true solution also has the symmetry of the approximate solution. The method was also applied in \cite{van2021spontaneous}, where symmetries on the range of the function were also considered. The same algorithm can be applied and the computational details are outlined in Sections 5 and 6 of the aforementioned paper. 
 \par In this paper, we will focus on $D_3$ and $D_6$ symmetry, which are both expressible on the hexagonal lattice. We wish to do so using computer-assisted proofs (CAPs). The use of validated numerics and CAPs to study problems in differential equations and dynamical systems has become increasingly popular. While there are many approaches for CAPs, we will focus on one which relies on a Newton-Kantorovich type contraction argument. More specifically, one constructs a fixed point operator whose fixed points correspond to the zeros of the problem being studied. The goal is to then show that this operator is a contraction on some ball around a well-chosen numerical approximation of the solution. This makes the approach constructive in nature as it is based on building such an approximate solution, which can provide some desirable properties of the solution (its shape, location in the function space, among others). In order to show that the fixed point operator is contracting on such a ball, we will need to estimate certain bounds partially by hand and partially on the computer using interval arithmetic (cf. \cite{julia_interval}). As we are interested in hexagonal lattice symmetries, we will need to provide the necessary estimates to perform a computer-assisted proof using Fourier series posed on the hexagonal lattice. As the methods developed by the authors of \cite{JB_symmetries_1,JB_symmetries_2} are done generally for any space group, their techniques readily apply. However, we would like to use RadiiPolynomial.jl in Julia \cite{julia_olivier} to perform our rigorous computations. This means we need to develop the Julia code to enforce $D_3$ and $D_6$ symmetries. Indeed, \cite{julia_olivier} only has the ability to enforce $D_2$-symmetry. One can also enforce $D_4$-symmetry by using the code provided at \cite{dominic_D_4_julia}. We wish to generalize the code of \cite{dominic_D_4_julia} to provide a complete package of all dihedral space groups. The implementation of such a package is nontrivial in a variety of aspects. Firstly, one must compute the reduced set of Fourier indices and store it in a vector. This means we need to find a mapping between the entries of a vector and the reduced set of indices which are tuples since we are in 2D. Following this, we need to develop a way to produce the necessary objects for our CAP. This includes addition, convolution, ways to build multiplication operators, ways to evaluate the sequence with this symmetry, norms, and more. We describe these in more detail in Section \ref{sec : dihedral.jl}. Moreover, since we have $D_3$ and $D_6$-symmetry, we would like to demonstrate how we can generate the periodic tiling on $\mathbb{R}^2$ using triangles and hexagons respectively. This is of interest as Fourier series on the hexagonal lattice will be defined on a parallelogram; however, under the symmetry of $D_3$ and $D_6$, we will show that one can use two $D_3$-symmetric triangles to create the $D_3$ tiling and one $D_6$-symmetric hexagon for the $D_6$ tiling. This is possible since triangles and hexagons form a tiling of the plane. 
 \par Along with proving the solution, we would also like to develop the theory to prove branches with its symmetry and periodic properties. Such approaches are well studied. For some of the first examples, we refer the interested reader to \cite{arioli_sug2,continue3,continue2,continue1}. Recently, a new approach was developed by the author of \cite{polynomial_chaos}. Using a constructive approach, the author first identifies an approximate branch of periodic solutions. This approximate branch can be represented using Chebyshev series. This Chebyshev representation is then used for the Newton-Kantorovich argument. A CAP can then be performed to obtain a rigorous enclosure of the true branch of periodic solutions. The approach has been applied in a variety of studies, such as \cite{continuation_1,continuation_2,continuation_3,cadiot_witham}. In \cite{cadiot_witham}, the author wrote a branch of solitons in the Whitham equation as a Chebyshev series where the dependency is in the wave speed parameter. Fourier transforms can then be used to compute the bounds for the CAP. When there are saddle-node (fold) bifurcations present on the branch, one can write the Chebyshev series depending on its pseudo-arclength. This requires one to use an augmented system and solve for a parameter along with the solution itself. This has been done in \cite{maxime_general,maxime_paper_continuation,marschal,olivier_kevin_paper,continue1} where the augmented system was considered in order to perform the proof. Note that the approach does not directly lead to a proof of the saddle-node bifurcation itself, but the augmented system allows for one to move past such bifurcations. This is possible due to the underlying mechanics being pseudo-arclength continuation. In this manuscript, we will use the latter where we expand the branch of a Chebyshev series of its pseudo-arclength. We aim to provide more of the fine details on the approach and how the objects are constructed. More specifically, we describe the numerical approach used to obtain the Chebyshev approximation of the branch and approximate inverse for the Newton-Kantorovich theorem. Furthermore, rather than stating that we use the previously obtained bounds but with the objects defined on Chebyshev series, we explicitly write the formulas for the bounds and how they can be estimated on the computer. By describing the approach in great detail, we hope to increase the usability of the approach for future studies.
 \par The rest of this paper is organized as follows. In Section \ref{sec : symmetric fourier coefficients}, we begin with a discussion on symmetric Fourier series on the hexagonal lattice. This will allow us to enforce $D_3$ and $D_6$ symmetries theoretically. Then, we discuss how one can generate the periodic tiling via triangles and hexagons. As it turns out, the result is fundamentally different for triangles and hexagons, so our discussion will require two proofs: one for $D_3$ (see Theorem \ref{th : triangle periodic}) and one for $D_6$ (see Theorem \ref{th : hexagon periodic}). Following this in Section \ref{sec : CAP}, we perform our computer-assisted analysis to prove $D_3$ and $D_6$ periodic solutions. This begins with a discussion of the numerical aspects in Section \ref{sec : numerical aspects}. We include more details on the implementation of the $D_3$ and $D_6$ symmetries in a way that is compatible with the library \cite{julia_olivier} in Section \ref{sec : dihedral.jl}. Then we obtain the required bounds to perform the CAP in Section \ref{sec : bounds}, followed by a presentation of our results in Section \ref{sec : results solutions}. In Section \ref{sec : continuation}, we develop the necessary tools for proving branches of solutions with $D_3$ and $D_6$-symmetry. This not only provides us with a proof of the branch, but also that the entire branch is symmetric. That is, each periodic pattern on this branch can be generated by either two triangles (in the case of $D_3$) or one hexagon (in the case of $D_6$). The results are presented in Section \ref{sec : results branches}. Finally, we conclude in Section \ref{sec : conclusion}, and remark on future directions.
\section{Symmetric Fourier coefficients}\label{sec : symmetric fourier coefficients}
As mentioned in the introduction, our approach heavily relies on symmetric Fourier series. From that perspective, we introduce related notations and results from \cite{JB_symmetries_1} and \cite{JB_symmetries_2}. To begin, let $D_j$ be the dihedral group of order $2j$. We will focus on $j = 3,6$ as we are interested in solutions that are periodic on a triangle and/or hexagon. Let $d$ be a constant such that $0<d <\infty$.  Then, we define  $$\tilde{n} = (\tilde{n}_1,\tilde{n}_2)\bydef \left( \frac{n_1\pi}{d},\frac{n_2\pi}{d} \right) \in \mathbb{R}^2$$ for all $(n_1,n_2,) \in \mathbb{Z}^2$. We want to restrict to Fourier series representing $D_j$-symmetric functions for $j = 3,6$. To do so, we present the following lemma.
\begin{lemma}\label{lem : G-fourier series}
Let $u$ be a Fourier series of the form 
\begin{equation}\label{usual_fourier_series}
    u(x) = \sum_{n \in \mathbb{Z}^2} u_n e^{i (\mathcal{L} \tilde{n}) \cdot x}
\end{equation}
where $\mathcal{L} \bydef \begin{bmatrix}
    1 & -\frac{1}{2} \\
    0 & \frac{\sqrt{3}}{2}
\end{bmatrix}$ is the change of basis for the hexagonal lattice. Let $D_j$ be the dihedral group of order $2j$. Therefore, each $g \in D_j$ has a unitary representation of the form
\begin{align}
    gx = C_gx + 2\pi \mathcal{L}^{-T} D_g
\end{align}
where $C_g$ is an orthogonal $2 \times 2$ matrix and $D_g \in [0,1]^2$. Next, define
\begin{align}
    \beta_g(n) \bydef \mathcal{L}^{-1} C_g \mathcal{L}n,~~\alpha_g(n) \bydef \exp(2\pi i \beta_g(n) \cdot D_g).
\end{align}
Then, it follows that $u(x) = u(gx)$ for all $g \in D_j$ and $x \in \mathbb{R}^2$ if and only if $ \alpha_g(n) u_{\beta_g(n)} = u_n$. In this case, we say that $u$ has a $D_j$-Fourier series representation.
\end{lemma}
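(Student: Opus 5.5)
We substitute the affine action into the Fourier series, re-index the sum, and compare coefficients using uniqueness of Fourier coefficients on the hexagonal lattice. Fix $g \in D_j$ with $gx = C_g x + 2\pi \mathcal{L}^{-T} D_g$. Then
\begin{align}
u(gx) = \sum_{n \in \mathbb{Z}^2} u_n e^{i(\mathcal{L}\tilde n)\cdot C_g x}\, e^{i (\mathcal{L}\tilde n)\cdot 2\pi \mathcal{L}^{-T} D_g}.
\end{align}

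The first step handles the translation factor. Since $(\mathcal{L}\tilde n)\cdot \mathcal{L}^{-T}D_g = \tilde n \cdot D_g = \frac{\pi}{d}\, n\cdot D_g$, the phase depends only on $n$ and $D_g$. Up to the scaling convention of $d$, which is absorbed into the definition of the lattice and $D_g$, this is the factor $\exp(2\pi i\, n\cdot D_g)$.

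The second step handles the linear part. Because $C_g$ is orthogonal, $(\mathcal{L}\tilde n)\cdot C_g x = (C_g^{T}\mathcal{L}\tilde n)\cdot x = (\mathcal{L}\,\widetilde{m})\cdot x$ with $m = \mathcal{L}^{-1} C_g^{T} \mathcal{L} n = \beta_{g^{-1}}(n)$. The key fact needed here is that $C_g$ preserves the hexagonal lattice $\mathcal{L}\mathbb{Z}^2$. This is the standard fact that the point group of $D_3$ and $D_6$ consists of rotations by multiples of $\pi/3$ and reflections mapping the lattice vectors $(1,0)$ and $(-\tfrac12,\tfrac{\sqrt3}{2})$ to lattice vectors. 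We verify it on the generators $r,s$ of the presentation \eqref{def : Dj presentation}, by checking that $\mathcal{L}^{-1}C_r\mathcal{L}$ and $\mathcal{L}^{-1}C_s\mathcal{L}$ are integer matrices with integer inverses. Hence $\beta_g$ is a bijection of $\mathbb{Z}^2$ for every $g$.

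The third step re-indexes the sum. Setting $n = \beta_g(m)$ gives
\begin{align}
u(gx) = \sum_{m\in\mathbb{Z}^2} u_{\beta_g(m)}\, e^{2\pi i \beta_g(m)\cdot D_g}\, e^{i(\mathcal{L}\tilde m)\cdot x} = \sum_{m} \alpha_g(m) u_{\beta_g(m)} e^{i(\mathcal{L}\tilde m)\cdot x}.
\end{align}
This uses the orthogonality relation $C_g^T = C_g^{-1}$, so that $\beta_{g^{-1}}^{-1}=\beta_g$.

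The final step compares coefficients. The exponentials $\{e^{i(\mathcal{L}\tilde m)\cdot x}\}_{m}$ are orthogonal on a fundamental parallelogram of the dual lattice. Therefore $u = u\circ g$ holds if and only if $u_m = \alpha_g(m)u_{\beta_g(m)}$ for all $m$, and imposing this for all $g\in D_j$ gives the lemma. For the converse direction we simply reverse the computation.

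The main obstacle is the bookkeeping in the first two steps. One must ensure that $\beta_g$ maps $\mathbb{Z}^2$ onto itself, which relies on $C_g$ being a lattice symmetry. One must also check that the phase convention with $\mathcal{L}^{-T}$ and the scaling $\pi/d$ yields exactly $\alpha_g$ as defined, including whether $\beta_g$ or $\beta_{g^{-1}}$ appears. If a mismatch arises, we would use that the condition ranges over the whole group, so $g \leftrightarrow g^{-1}$ can be interchanged.
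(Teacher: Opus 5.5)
Your argument is correct and is the standard one. The paper does not prove this lemma itself but defers to \cite{JB_symmetries_2}, where it follows from exactly this route: substitute $gx$ into the series, re-index via the lattice automorphism $\beta_g=\mathcal{L}^{-1}C_g\mathcal{L}\in GL_2(\mathbb{Z})$, and compare Fourier coefficients.

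Your normalization caveat is justified. The translation phase is literally $\exp\bigl(2\pi i\tfrac{\pi}{d}\,\beta_g(m)\cdot D_g\bigr)$, so the stated $\alpha_g$ is exact only if the translation is $2d\mathcal{L}^{-T}D_g$ rather than $2\pi\mathcal{L}^{-T}D_g$, or if $d=\pi$. This is harmless here, because the paper's dihedral elements act linearly about the origin, giving $D_g=0$ and $\alpha_g\equiv 1$. Your closing hedge about swapping $g\leftrightarrow g^{-1}$ is unnecessary, since your re-indexing already yields $\beta_g$ with the correct phase.
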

\begin{proof}
The proof can be found in \cite{JB_symmetries_2}. 
\end{proof}
We now define
\begin{align}
    \parallelogram_0 \bydef \mathcal{L}^{-T}(-d,d)^2 = \left\{(x_1,x_2) ~ \text{such} ~ \text{that} ~\frac{x_1}{\sqrt{3}} - \frac{2}{\sqrt{3}} d \leq x_2 \leq \frac{x_1}{\sqrt{3}} + \frac{2}{\sqrt{3}} d, ~ -d \leq x_1 \leq d\right\}.\label{def : parallelogram0}
\end{align}
$\parallelogram_0$ will be the domain of periodicity. Lemma \ref{lem : G-fourier series} provides a correspondence between symmetry of functions and symmetry of the coefficients. This lemma also has an immediate corollary.
\begin{corollary}\label{corr : reduced_set}
Let $u$ be a $D_j$-Fourier series where $D_j$ is the dihedral group of order $2j$. 
Let $\mathrm{orb}_{D_j}(n)$ be the orbit of $n \in \mathbb{Z}^2$ in $D_j$ (cf. \cite{gallianalgebra}). We recall the definition of the orbit as \begin{align}
    \mathrm{orb}_{D_j}(n) = \{g \cdot n, \ \mathrm{for \ all} \ g \in D_j\}.
\end{align}
Let $\mathcal{Z}_{\mathrm{red}}(D_j)$ be defined as in \eqref{def : reduced set}. Then, the $D_j$-Fourier series of $u$ can be written with indices in $\mathcal{Z}_{\mathrm{red}}(D_j)$. More specifically,
\begin{align}
    u(x) = \sum_{n \in \mathcal{Z}_{\mathrm{red}}(D_j)} u_n \sum_{k \in \mathrm{orb}_{D_j}(n)} e^{i \mathcal{L}\tilde{k} \cdot x}.\label{def : H_fourier series}
\end{align}
\end{corollary}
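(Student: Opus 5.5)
The plan is to regroup the sum \eqref{usual_fourier_series} orbit by orbit, using the coefficient relation of Lemma \ref{lem : G-fourier series}. Throughout I take the action of $D_j$ on indices to be $g\cdot n \bydef \beta_g(n) = \mathcal{L}^{-1}C_g\mathcal{L}n$. I also use the fact that the dihedral groups considered here are point groups fixing the origin, so that $D_g = 0$ and hence $\alpha_g(n) = 1$ for every $g$ and $n$.

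\textbf{Step 1: $\beta$ is a genuine group action on $\mathbb{Z}^2$.} Each $C_g$ is a symmetry of the hexagonal lattice $\mathcal{L}\mathbb{Z}^2$, since it is a rotation by a multiple of $\pi/3$ or a reflection preserving that lattice. Hence $\beta_g$ maps $\mathbb{Z}^2$ bijectively onto itself. Moreover $\beta_{gh} = \mathcal{L}^{-1}C_gC_h\mathcal{L} = \beta_g\circ\beta_h$ and $\beta_e = \mathrm{id}$. This gives a group action, so $\mathbb{Z}^2$ is the disjoint union of its orbits $\mathrm{orb}_{D_j}(n)$. Since $\mathcal{Z}_{\mathrm{dom}}(D_j)$ contains exactly one element of each orbit, we have $\mathbb{Z}^2 = \bigsqcup_{n\in\mathcal{Z}_{\mathrm{dom}}(D_j)}\mathrm{orb}_{D_j}(n)$.

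\textbf{Step 2: the coefficients are constant on orbits.} With $\alpha_g\equiv 1$, Lemma \ref{lem : G-fourier series} gives $u_{\beta_g(n)} = u_n$ for all $g$. So $u_k = u_n$ for every $k\in\mathrm{orb}_{D_j}(n)$. In particular, if some index in an orbit has a coefficient forced to vanish by the symmetry, then the whole orbit vanishes. Thus $\mathcal{Z}_{\mathrm{triv}}(D_j)$ is a union of full orbits, and so is its complement $\mathcal{Z}_{\mathrm{sym}}(D_j)$.

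\textbf{Step 3: regroup the series.} I reorder the series \eqref{usual_fourier_series} along the partition of Step 1. This is legitimate under absolute summability of $(u_n)$, which holds for the sequence spaces used later; in an $L^2$ setting it is just the unconditional convergence of orthogonal expansions. The regrouped series is
\[
u(x) = \sum_{n\in\mathcal{Z}_{\mathrm{dom}}(D_j)}\ \sum_{k\in\mathrm{orb}_{D_j}(n)} u_k e^{i\mathcal{L}\tilde{k}\cdot x}
= \sum_{n\in\mathcal{Z}_{\mathrm{dom}}(D_j)} u_n\sum_{k\in\mathrm{orb}_{D_j}(n)} e^{i\mathcal{L}\tilde{k}\cdot x}.
\]
The orbits whose representative lies in $\mathcal{Z}_{\mathrm{triv}}(D_j)$ contribute zero, so the outer sum restricts to $\mathcal{Z}_{\mathrm{dom}}(D_j)\cap\mathcal{Z}_{\mathrm{sym}}(D_j) = \mathcal{Z}_{\mathrm{red}}(D_j)$. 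This yields \eqref{def : H_fourier series}.

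\textbf{Main obstacle.} The step needing care is the bookkeeping in Step 2, in two respects.

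First, the formula must sum over the orbit as a \emph{set}, not over $g\in D_j$. Otherwise indices with nontrivial stabilizer, such as $n=0$ or indices on reflection axes, would be overcounted by the stabilizer order.

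Second, one must confirm $\alpha_g\equiv 1$. If a $g$ with $D_g\neq 0$ were present, the inner sum would have to carry the phases $\alpha_g(n)$. Also, $u_n=0$ would then be forced whenever the stabilizer of $n$ produces a phase $\alpha\neq 1$; this is exactly the mechanism that populates $\mathcal{Z}_{\mathrm{triv}}$ in general. Since the statement carries no phases, I would verify directly from the representation in Lemma \ref{lem : G-fourier series} that all $D_g$ vanish for the $D_3$ and $D_6$ generators used. I would also check that the convention $u_n = \alpha_g u_{\beta_g(n)}$, as opposed to one with $g^{-1}$, does not alter the orbits; it does not, since orbits are closed under inverses.
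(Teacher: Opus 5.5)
Your argument is correct. Regrouping the series along the orbit partition, with coefficients constant on orbits because $\alpha_g\equiv 1$ for the origin-fixing linear action $g\cdot x=\mathcal{A}x$ the paper uses, is the standard argument behind this corollary; the paper does not reprove it but cites \cite{JB_symmetries_2}. One simplification: in this phase-free setting no relation $u_{\beta_g(n)}=u_n$ ever forces a coefficient to vanish, so $\mathcal{Z}_{\mathrm{triv}}(D_j)=\emptyset$ and $\mathcal{Z}_{\mathrm{red}}(D_j)=\mathcal{Z}_{\mathrm{dom}}(D_j)$, which makes your restriction to $\mathcal{Z}_{\mathrm{sym}}(D_j)$ in Step 3 vacuous.
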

\begin{proof}
The proof can be found in \cite{JB_symmetries_2}.
\end{proof}
Essentially, $\mathcal{Z}_{\mathrm{red}}(D_j)$ contains all coefficients necessary to compute a $D_j$-Fourier series. In the case of dihedral groups, this set is a strict subset of $\mathbb{Z}^2$. Therefore, we  restrict the indexing of $D_j$-symmetric functions to $\mathcal{Z}_{\mathrm{red}}(D_j)$
and construct the full series by symmetry if needed. Let us now discuss how we can obtain tilings of the plane with $D_3$ and $D_6$-symmetry.
 \subsection{Periodic tilings with Hexagons and Triangles}
 As we are attempting to prove periodic solutions with $D_3$ and $D_6$ symmetry, we are interested in demonstrating how to construct a periodic tiling using triangles and hexagons respectively. More specifically, when using Fourier series, one naturally obtains periodicity on the domain $\parallelogram_0$. When one enforces $D_3/D_6$-symmetry, then the function is symmetric about a triangle/hexagon. We wish to merge these two viewpoints into one where we can think of the function as being periodic on a triangle/hexagon.  That is, the $D_j$ periodic pattern can be described via some $D_j$-symmetric domain(s). Let us begin by stating the result for $D_3$.
 \begin{theorem}\label{th : triangle periodic}
Let $u$ be a periodic function with domain of periodicity $\parallelogram_0$ satisfying $D_3$-symmetry. Let $\Delta_1, \Delta_2$ be two equilateral triangular domains
\begin{align}
    &\Delta_1 \bydef \left\{(x_1,x_2) ~ \text{such} ~ \text{that} ~ -\frac{x_1}{\sqrt{3}} \leq x_2 \leq \frac{x_1}{\sqrt{3}} + \frac{4}{\sqrt{3}} d, ~ -2d \leq x_1 \leq 2d\right\}\\
    &\Delta_2 \bydef \left\{(x_1,x_2) ~ \text{such} ~ \text{that} ~ \frac{x_1}{\sqrt{3}} - \frac{4}{\sqrt{3}}d\leq x_2 \leq -\frac{x_1}{\sqrt{3}}, ~ -2d \leq x_1 \leq 2d\right\}.\label{def : Delta1and2}
\end{align}
Then, $u|_{\Delta_k}$ for $k = 1,2$ is $D_3$-symmetric about the centroid of $\Delta_k$, and the periodic pattern can be generated by translations of $\Delta_1$ and $\Delta_2$. 
 \end{theorem}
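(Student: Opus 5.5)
Our plan is to work directly with the geometry of the lattice of periods and the $D_3$ action, with Lemma \ref{lem : G-fourier series} supplying the symmetry. Periodicity on $\parallelogram_0$ means $u(x + 2d\,\mathcal{L}^{-T}m) = u(x)$ for all $m \in \mathbb{Z}^2$. Up to the normalisation of $d$, the lattice $\Lambda \bydef 2d\,\mathcal{L}^{-T}\mathbb{Z}^2$ is a hexagonal lattice, so its two basis vectors have equal length and meet at angle $\pi/3$. We first compute these vectors explicitly, together with the vertices of $\Delta_1$ and $\Delta_2$ from \eqref{def : Delta1and2}:
\begin{itemize}
\item $\Delta_1$ has vertices $(-2d,\tfrac{2d}{\sqrt3})$, $(2d,-\tfrac{2d}{\sqrt3})$ and $(2d,\tfrac{6d}{\sqrt3})$;
\item $\Delta_2$ is its reflection through the origin, so $\Delta_2=-\Delta_1$.
\end{itemize}
Both are equilateral triangles of side $8d/\sqrt3$, and they share the edge on the line $x_2=-x_1/\sqrt3$. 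Their union is therefore a rhombus $R$ centred at $0$.

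\textbf{Tiling.} We check that the two edge vectors of $R$ issuing from a common vertex lie in $\Lambda$ and form a basis of it. This amounts to an explicit integer change-of-basis computation with determinant $\pm1$, comparing the area of $R$ with the area of $\parallelogram_0$. Once that is done, $R$ is a fundamental domain of $\Lambda$. Its translates $R+\lambda$ for $\lambda\in\Lambda$ then tile $\mathbb{R}^2$. Since $u$ is $\Lambda$-periodic, $u$ on all of $\mathbb{R}^2$ is recovered from $u|_{\Delta_1}$ and $u|_{\Delta_2}$ by translations. If the scaling turns out to make $R$ a union of several fundamental cells rather than exactly one, the tiling statement still holds, with $R$ a fundamental domain of a sublattice of $\Lambda$.

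\textbf{Symmetry.} By Lemma \ref{lem : G-fourier series}, $u(gx)=u(x)$ for each $g\in D_3$, where $g$ acts by $gx=C_gx+2\pi\mathcal{L}^{-T}D_g$. We choose generators $r$ (rotation by $2\pi/3$) and $s$ (reflection) and identify each affine map as a rotation about, or a reflection across a line through, a specific point. Because the rotation parts $C_g$ generate $D_3$, composing $g$ with lattice translations $\tau_\lambda$ gives the isometries $\tau_\lambda\circ g$, and these also preserve $u$. The rotations $\tau_\lambda\circ r$ are rotations by $2\pi/3$ about various centres, namely all points of the form $c_0+$ (the images of $\Lambda$ under $(I-C_r)^{-1}$).

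We then show that the centroids $c_1$ of $\Delta_1$ and $c_2=-c_1$ of $\Delta_2$ are among these centres. For the reflections, we show that for the appropriate $\lambda$ the line fixed by $\tau_\lambda\circ s$ passes through $c_k$ and is a median of $\Delta_k$. Together these give the full $D_3$ about $c_k$. Moreover, each of these isometries maps $\Delta_k$ onto itself, since it fixes the centroid and permutes the vertices. So $u|_{\Delta_k}$ is invariant under the $D_3$ symmetry group of the triangle $\Delta_k$.

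\textbf{Main obstacle.} The main obstacle is the bookkeeping in the symmetry step. The translational parts $D_g$ in the representation are not pinned down in the statement. So we must fix the concrete representation of $D_3$ used (presumably the $p3m1$-type action with $D_g=0$, i.e.\ symmetry about the origin). We then verify that the coset of rotation centres $(I-C_r)^{-1}\Lambda$ contains $\pm c_1$, and that the mirror lines of the reflections are medians of the triangles rather than lines parallel to the edges. We would first try the case $D_g=0$, where $c_1$ must be shown to lie in $(I-C_r)^{-1}\Lambda$. This is a direct $2\times2$ integer computation, and the side length $8d/\sqrt3$ is presumably chosen precisely so that it works out.
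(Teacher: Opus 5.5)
Your plan is essentially the paper's proof. The paper likewise treats $\Delta_1\cup\Delta_2=\parallelogram_{2d}$ as a (non-minimal) domain of periodicity. Its edge vectors are exactly twice the basis of $\Lambda$, so your determinant-$\pm1$ expectation fails; it is your fallback, a fundamental domain of the index-$4$ sublattice $2\Lambda$, that actually occurs. The paper then checks that conjugating the $D_3$ generators to $c_k$ only adds a translation $(\mathcal{A}^{-1}-I)c_k\in\Lambda$. That is equivalent to your condition that $c_k$ is a rotation centre of, and lies on a mirror of, some $\tau_\lambda\circ g$.

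The paper implicitly uses exactly the representation you anticipated: $D_g=0$ with mirror $\mathrm{diag}(1,-1)$, i.e.\ $p3m1$. There $(I-R_{2\pi/3})c_1=(2d,\tfrac{2d}{\sqrt3})\in\Lambda$, and the horizontal mirror through $c_1$ is the median of $\Delta_1$ from $(-2d,\tfrac{2d}{\sqrt3})$. So your extra check that these isometries preserve $\Delta_k$, which the paper leaves implicit, goes through.
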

 \begin{proof}
Similar to $\parallelogram_0$, we introduce
\begin{align}
    \parallelogram_{2d} \bydef \left\{(x_1,x_2) ~ \text{such} ~ \text{that} ~\frac{x_1}{\sqrt{3}} - \frac{4}{\sqrt{3}} d \leq x_2 \leq \frac{x_1}{\sqrt{3}} + \frac{4}{\sqrt{3}} d, ~ -2d \leq x_1 \leq 2d\right\}.\label{def : parallelogram1}
\end{align}
Note that $\parallelogram_{2d} \bydef \mathcal{L}^{-T} (-2d,2d)^2$ is also a domain of periodicity, albeit not the minimal one. We will show that $\parallelogram_{2d}$ can be divided into two equilateral triangles, which are $\Delta_1$ and $\Delta_2$. To do so, we must prove that the opposite angles of $\parallelogram_{2d}$ measure $60^{\circ}$ and $120^{\circ}$. Its vertices are
\begin{align}
    &V_1 \bydef \mathcal{L}^{-T} \begin{bmatrix}
        2d \\ 2d
    \end{bmatrix} = \begin{bmatrix}
        2d \\ 2\sqrt{3}d
    \end{bmatrix},~V_2 \bydef \mathcal{L}^{-T} \begin{bmatrix}
        2d \\ -2d
    \end{bmatrix} = \begin{bmatrix}
        2d \\
        -\frac{2}{\sqrt{3}}d
    \end{bmatrix}, \\
    &V_3 \bydef \mathcal{L}^{-T} \begin{bmatrix}
        -2d \\ 2d
    \end{bmatrix} = \begin{bmatrix}
        -2d \\
        \frac{2}{\sqrt{3}} d
    \end{bmatrix},~V_4 \bydef \mathcal{L}^{-T} \begin{bmatrix}
        -2d \\ -2d
    \end{bmatrix} = \begin{bmatrix}
        -2d \\
        -2\sqrt{3} d
    \end{bmatrix}.
\end{align}
We will now find the angle at the vertex $V_1$. To do so, we will form two vectors, denoted $\overrightarrow{v}_1$ and $\overrightarrow{v}_2$ that go through this vertex.
\begin{align}
    &\overrightarrow{v}_1 \bydef V_1 - V_2 = \begin{bmatrix}
        2d - 2d \\
        2\sqrt{3} d - - \frac{2}{\sqrt{3}} d
    \end{bmatrix} = \begin{bmatrix}
        0 \\
        \frac{8}{\sqrt{3}} d
    \end{bmatrix}, ~\overrightarrow{v}_2 \bydef V_1 - V_3 = \begin{bmatrix}
        2d - -2d \\
        2\sqrt{3} d - \frac{2}{\sqrt{3}} d
    \end{bmatrix} = \begin{bmatrix}
        4d \\
        \frac{4}{\sqrt{3}} d
    \end{bmatrix}.
\end{align}
Now, notice that
\begin{align}
    \overrightarrow{v}_1 \cdot \overrightarrow{v}_2 = \frac{32}{3} d^2, ~ |\overrightarrow{v}_1| = \frac{8}{\sqrt{3}} d, ~ |\overrightarrow{v}_2| = \frac{8}{\sqrt{3}} d.
\end{align}
Hence, we obtain
\begin{align}
    \theta = \arccos\left(\frac{\frac{32}{3} d^2}{\frac{8}{\sqrt{3}} d \frac{8}{\sqrt{3}}d}\right) = \arccos\left(\frac{1}{2}\right) = 60^{\circ}.
\end{align}
This shows that one of the interior angles of $\parallelogram_{2d}$ is $60^{\circ}$, meaning the non-opposite angle is $120^{\circ}$. This means we can split this parallelogram into two equilateral triangles. 
\par Let us now show that the two equilateral triangles we obtain are $\Delta_1$ and $\Delta_2$. By our previous computation, we know that the angles at the vertices $V_2$ and $V_3$ are $120^{\circ}$ angles. Hence, the two equilateral triangles can be found by drawing a line between $V_2$ and $V_3$. This line has a slope of
\begin{align}
   \frac{-\frac{2}{\sqrt{3}}d - \frac{2}{\sqrt{3}}d}{2d - -2d} = \frac{-\frac{4}{\sqrt{3}}d}{2d + 2d} = \frac{\frac{4}{\sqrt{3}}d}{4d} = -\frac{1}{\sqrt{3}}.
\end{align}
With the slope computed, the line we want is given by
\begin{align}
    &x_2 + \frac{2}{\sqrt{3}}d = -\frac{1}{\sqrt{3}}(x_1 - 2d) \\
    &x_2 + \frac{2}{\sqrt{3}}d = -\frac{1}{\sqrt{3}} x_1 + \frac{2}{\sqrt{3}}d \\
    &x_2 = -\frac{1}{\sqrt{3}}x_1.
\end{align}
Then, according to the definition of $\parallelogram_{2d}$ given in \eqref{def : parallelogram1}, it is clear that we obtain $\Delta_1$ and $\Delta_2$. Since triangles tile the plane, and we have a periodic function, we can create a tiling using $\Delta_1$ and $\Delta_2$. 
\par Now that we have obtained that $\Delta_1$ and $\Delta_2$ can tile the plane, we need to show that $u$ is $D_3$-symmetric on $\Delta_k$ for $k = 1,2$. The centroids of $\Delta_1$ and $\Delta_2$, $c_1$ and $c_2$ respectively, are
\begin{align}
    c_1 \bydef \left(\frac{2d}{3},\frac{2d}{\sqrt{3}}\right), ~ c_2 \bydef \left(-\frac{2d}{3},-\frac{2d}{\sqrt{3}}\right).
\end{align}
To proceed generally, we introduce $c \bydef \left(\pm \frac{2d}{3}, \pm \frac{2d}{\sqrt{3}}\right)$. We must now show that $u$ is invariant under $D_3$-symmetry operations done about $c$. Let $g\cdot x = \mathcal{A}x$ for some $g \in D_3$ and $\mathcal{A} \in M_{2\times2}(\mathbb{R})$. Then,  
\begin{align}
    u(\mathcal{A} (x - c) + c) = u(\mathcal{A}(x - c + \mathcal{A}^{-1} c)) = u(\mathcal{A}(x + (\mathcal{A}^{-1} - I_d) c)) = u(x + (\mathcal{A}^{-1} - I_d) c)
\end{align}
where the last step followed from the fact that $u$ is $D_3$-symmetric. If $\mathcal{A} = \begin{bmatrix}
    1 & 0 \\
    0 & -1
\end{bmatrix}$, then 
\begin{align}
     u(x + (\mathcal{A}^{-1} - I_d)c) = u\left(x \mp \begin{bmatrix}
         0 \\ \frac{4d}{\sqrt{3}}
     \end{bmatrix}\right) = u(x)
\end{align}
where the last step followed from the fact that $\mathcal{L}^T \begin{bmatrix}
    0 \\ \mp \frac{4d}{\sqrt{3}}
\end{bmatrix} = \begin{bmatrix} 0 \\ \mp 2d \end{bmatrix}$. Hence, we can disregard it by periodicity of $\parallelogram_0$. Note that we require the periodicity on $\parallelogram_0$ here despite using $\parallelogram_{2d}$ to define $\Delta_k$ for $k = 1,2$. Let $R_{\theta}$ be the rotation matrix by $\theta$. If $\mathcal{A} = R_{\frac{2\pi}{3}}$ is a rotation, then
\begin{align}
    u(x + (\mathcal{A}^{-1} - I_d)c) = u\left(x + \begin{bmatrix}
        0 \\ \mp \frac{4\sqrt{3}d}{3}
    \end{bmatrix}\right) = u(x)
\end{align}
which we can again disregard by periodicity on $\parallelogram_0$. Showing that the result holds for other group elements of $D_3$ will use similar steps. This concludes the proof.
 \end{proof}
 We present the analogous result for $D_6$.
  \begin{theorem}\label{th : hexagon periodic}
Let $u$ be a periodic function satisfying $D_6$-symmetry. Let 
{\footnotesize\begin{align}
    &\parallelogram_1 \bydef \left\{ \left(x_1-d,x_2+\frac{d}{\sqrt{3}}\right) ~ | ~ (x_1,x_2) \in \parallelogram_0\right\},~\parallelogram_2 \bydef \left\{R_{\frac{2\pi}{3}} x ~ \text{for} ~ \text{all} ~ x \in \parallelogram_1\right\},~\parallelogram_3 \bydef \left\{ R_{\frac{4\pi}{3}} x ~ \text{for} ~ \text{all} ~ x \in \parallelogram_1\right\}
\end{align}}
where $R_{\theta}$ is the rotation matrix by $\theta$. Let $\varhexagon_0 \bydef \parallelogram_1 \cup \parallelogram_2 \cup \parallelogram_3$. Then, $u$ satisfies periodic boundary conditions on $\varhexagon_0$.
 \end{theorem}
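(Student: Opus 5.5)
We work directly with the geometry. First we show that $\varhexagon_0$ is a regular hexagon centered at the origin. Then we show that opposite edges of $\varhexagon_0$ differ by lattice vectors of $\parallelogram_0$, so that the periodicity of $u$ on $\parallelogram_0$ gives periodic boundary conditions on $\varhexagon_0$. The $D_6$-symmetry is what makes the rotated pieces $\parallelogram_2,\parallelogram_3$ carry the same values of $u$ as the rotated points of $\parallelogram_1$.

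\textbf{Geometry of $\parallelogram_1$.} As in the proof of Theorem \ref{th : triangle periodic}, we compute the vertices of $\parallelogram_0$ from \eqref{def : parallelogram0}:
\begin{align}
\mathcal{L}^{-T}(\pm d,\pm d) \in \left\{\left(d,\sqrt{3}d\right),\ \left(d,-\tfrac{d}{\sqrt{3}}\right),\ \left(-d,\tfrac{d}{\sqrt{3}}\right),\ \left(-d,-\sqrt{3}d\right)\right\}.
\end{align}
Translating by $(-d,\frac{d}{\sqrt{3}})$ gives the vertices of $\parallelogram_1$:
\begin{align}
\left(0,\tfrac{4d}{\sqrt{3}}\right),\quad (0,0),\quad \left(-2d,\tfrac{2d}{\sqrt{3}}\right),\quad \left(-2d,-\tfrac{2d}{\sqrt{3}}\right).
\end{align}
So $\parallelogram_1$ is a rhombus with side $\frac{4d}{\sqrt{3}}$ and one vertex at the origin. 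The angle at the origin is $120^\circ$: the edge vectors $(0,\frac{4d}{\sqrt{3}})$ and $(-2d,\frac{2d}{\sqrt{3}})$ have inner product $\frac{8d^2}{3}$ and norms $\frac{4d}{\sqrt{3}}$, and the same arccos computation as in Theorem \ref{th : triangle periodic} gives the angle. Rotating by $\frac{2\pi}{3}$ and $\frac{4\pi}{3}$ fills the full $360^\circ$ around the origin. Hence $\parallelogram_1,\parallelogram_2,\parallelogram_3$ overlap only along edges, and their union is the regular hexagon with vertices $R_{k\pi/3}\left(0,\frac{4d}{\sqrt{3}}\right)$ for $k=0,\dots,5$. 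In particular $|\varhexagon_0|=|\parallelogram_0|$.

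\textbf{Periodicity.} Let $\Lambda\bydef 2d\,\mathcal{L}^{-T}\mathbb{Z}^2$ be the period lattice of $u$, generated by $a_1=(2d,\frac{2d}{\sqrt{3}})$ and $a_2=(0,\frac{4d}{\sqrt{3}})$. One checks that $\Lambda$ is a hexagonal lattice invariant under $R_{\pi/3}$. Its shortest vectors are exactly
\begin{align}
\pm a_2,\quad \pm a_1,\quad \pm(a_1-a_2)=\pm\left(2d,-\tfrac{2d}{\sqrt{3}}\right),
\end{align}
and these are the vectors joining midpoints of opposite edges of $\varhexagon_0$: the apothem is $2d$ and the edge normals are at angles $30^\circ+k60^\circ$. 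Therefore each edge of $\varhexagon_0$ is mapped onto its opposite edge by a vector in $\Lambda$, and $u$ agrees on opposite edges (with all derivatives, by smoothness). Since $\varhexagon_0$ has the area of a fundamental cell and its translates by $\Lambda$ tile the plane, $\varhexagon_0$ is a fundamental domain. Hence $u$ satisfies periodic boundary conditions on $\varhexagon_0$.

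\textbf{Role of the symmetry.} To make the role of the symmetry explicit, we also relate the values on the pieces. $\parallelogram_1$ is a translate of $\parallelogram_0$, so it is itself a period cell. By $D_6$-symmetry, $u(R_{2\pi/3}x)=u(x)$, so the values of $u$ on $\parallelogram_2,\parallelogram_3$ are rotated copies of those on $\parallelogram_1$. Thus $u|_{\varhexagon_0}$ is determined by $u|_{\parallelogram_1}$, and the $D_6$ tiling is generated by the single hexagon.

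\textbf{Main obstacle.} We expect the hardest step to be showing that $\Lambda$ is invariant under $R_{\pi/3}$ and that its six shortest vectors match the edge normals of $\varhexagon_0$. We would do this by direct computation, $R_{\pi/3}a_1=a_2$ and $R_{\pi/3}a_2=a_2-a_1$, which mirrors the explicit vertex computations of the previous proof.
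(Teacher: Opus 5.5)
Your periodicity step contains a genuine error. It misidentifies the geometry of $\varhexagon_0$, and several of its claims are false.

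\textbf{What goes wrong.}
\begin{enumerate}[(i)]
\item From your own vertex list, $\parallelogram_1$ has vertices $0$, $a_2$, $a_2-a_1$, $-a_1$. So the six vertices of $\varhexagon_0$ are exactly the six shortest vectors $\pm a_1,\pm a_2,\pm(a_1-a_2)$ of $\Lambda$. These have length $4d/\sqrt{3}$ (the circumradius) and sit at angles $\pi/6+k\pi/3$.
\item These vectors do not join midpoints of opposite edges. With apothem $2d$, such a vector must have length $4d$, and the edge normals are at angles $k\pi/3$, not $\pi/6+k\pi/3$. For instance, the two vertical edges $x_1=\pm 2d$ are related by $(4d,0)$, which is none of your six vectors.
\item The claim $|\varhexagon_0|=|\parallelogram_0|$ contradicts your own observation that three congruent copies of $\parallelogram_0$ meet only along edges. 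In fact $|\varhexagon_0|=3|\parallelogram_0|$.
\item Consequently $\varhexagon_0$ is not a fundamental domain of $\Lambda$, and its $\Lambda$-translates overlap. For example, $\varhexagon_0$ and $\varhexagon_0+a_2$ both contain $(0,\sqrt{3}d)$ in their interiors.
\item The angle computation also has a slip. $(-2d,\tfrac{2d}{\sqrt{3}})$ is the vertex opposite the origin, and your arccos actually returns $60^\circ$. The adjacent vertex is $-a_1=(-2d,-\tfrac{2d}{\sqrt{3}})$, which gives inner product $-\tfrac{8d^2}{3}$ and $\arccos(-\tfrac12)=120^\circ$.
\end{enumerate}
Your ``main obstacle'' is therefore half false: $R_{\pi/3}$-invariance of $\Lambda$ is correct, but the shortest vectors match the vertices, not the edge normals.

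\textbf{How to repair it.} The conclusion survives if you use the right translation vectors. Take $b_1=2a_1-a_2=(4d,0)$ and $b_2=a_1+a_2=(2d,2\sqrt{3}d)$, together with $b_2-b_1=2a_2-a_1$.
\begin{itemize}
\item These lie in $\Lambda$ and generate a sublattice $\Lambda'$ of index $\left|\det\begin{bmatrix}2&1\\-1&1\end{bmatrix}\right|=3$, which matches the area count.
\item $\varhexagon_0$ is precisely the Voronoi cell of $\Lambda'$: its apothem is $|b_i|/2=2d$, and its vertices lie at distance $4d/\sqrt{3}$.
\item Hence the $\Lambda'$-translates of $\varhexagon_0$ tile the plane. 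Since $\Lambda'\subset\Lambda$, $u$ agrees (with all derivatives) on opposite edges.
\end{itemize}
This part uses only $\Lambda$-periodicity, not $D_6$-symmetry.

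\textbf{Comparison with the paper.} The paper uses the $D_6$ rotations to identify $u$ on $\parallelogram_1,\parallelogram_2,\parallelogram_3$, as in your last paragraph. It then simply appeals to the fact that hexagons tile the plane and never writes down the translation vectors. Your lattice route, once corrected as above, supplies exactly the explicit period vectors the paper leaves implicit.
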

 \begin{proof}
We first shift $\parallelogram_0$ to $\parallelogram_1$, which is still a domain of periodicity by definition. Now, since $u$ is $D_6$-symmetric, we are able to perform both a $120^{\circ}$ and $240^{\circ}$ rotation about $(0,0)$ and preserve the function values. That is, $u|_{\parallelogram_k}$ for $k=1,2,3$ are equivalent under $D_6$-symmetry operations.
Then, if we define $\varhexagon_0 \bydef \parallelogram_1 \cup \parallelogram_2 \cup \parallelogram_3$, we will obtain a regular hexagonal domain. Moreover, since $u$ is $D_6$-symmetric, it follows that $u|{\varhexagon_0}$ is invariant under $D_6$-symmetry operations. Since hexagons tile the plane, we can generate the periodic pattern of $u$ using copies of $\varhexagon_0$. Hence, we have proven that $u$ is periodic on $\varhexagon_0$
 \end{proof}
 \begin{remark}
 The previous two theorems both provide results on tiling the plane using triangles and hexagons; however, the result for $D_3$ differs from that of $D_6$. In the case of $D_3$, we have that $u|_{\Delta_1} \neq u|_{\Delta_2}$. This is due to the fact that $D_3$-symmetry does not contain the needed element to relate $\Delta_1$ and $\Delta_2$. In other words, $\Delta_1$ and $\Delta_2$ generate distinct sublattices of the hexagonal lattice which, when repeated periodically by translation, can generate the $D_3$ pattern. On the other hand, in the case of $D_6$, we only need to consider $\varhexagon_0$ where $u|_{\varhexagon_0}$ does respect all the $D_6$-symmetry operations. As a result, $\varhexagon_0$ is a domain of periodicity on its own, providing us with a periodic function on a hexagon. To summarize, for $D_3$, we define two distinct equilateral triangles, each of which respect the $D_3$-symmetry. When tiled together, these two triangles can generate the full $D_3$ periodic pattern. For $D_6$, we define one hexagon which respects the $D_6$-symmetry. This hexagon on its own can be considered a domain of periodicity, meaning we have a periodic function on a hexagon. In fact, when the symmetry is $D_6$, one can show that $u|_{\Delta_1} = u_{\Delta_2}$, which is another way to generate the hexagonal tiling. \end{remark} 
 Now that we have obtained the desired results for tiling the plane using triangles and hexagons when the function has $D_3$ or $D_6$-symmetry respectively, let us begin our computer assisted analysis.
 \section{Computer Assisted Analysis}\label{sec : CAP}
 In this section, we will discuss the computer assisted aspects of our approach. From now on, we will refer to $u$ as a sequence rather than a function unless other specified. First, we introduce some notation. Let $\ell^1_{j,\nu}$ denote the following Banach space
{\small\begin{align}\label{ell1Gnu}   \ell^1_{j,\nu} \bydef \left\{u = (u_n)_{n \in \mathcal{Z}_{\mathrm{red}}(D_j)}: ~ \|u\|_{1,\nu} \bydef  \sum_{n \in \mathcal{Z}_{\mathrm{red}}(D_j)} \alpha_n |u_n|\nu^n < \infty \right\}, \text{ where }(\alpha_n)_{n \in \mathcal{Z}_{\mathrm{red}}(D_j)} \bydef |\mathrm{orb}_{D_j}(n)|.
\end{align}}
Note that a sequence in  $\ell^1_{j,\nu}$ is also in the usual $\ell^1_{\nu}$. More specifically, $\ell^1_{j,\nu}$ contains the sequences indexed on $\mathcal{Z}_{\mathrm{red}}(D_j)$. Given Fourier coefficients $u = (u_n)_{n \in \mathbb{Z}^2},  v= (v_n)_{n \in \mathbb{Z}^2}$ corresponding to Fourier series of the form \eqref{usual_fourier_series}, we define the discrete convolution as 
\begin{align}\label{def : conv}
(u*v)_n = \sum_{k \in \mathbb{Z}^2} u_{n-k}v_k.
\end{align}
In the case of symmetric sequences $\ell^1_{j,\nu}$, note that by using the relations determined by $\mathcal{Z}_{\mathrm{red}}(D_j)$, one can unfold a sequence of the form \eqref{def : H_fourier series} into one of the form \eqref{usual_fourier_series}. After performing this unfolding, then one can use \eqref{def : conv} to compute the discrete convolution. Since the result is the same, we still denote $u*v$ the discrete convolution representing the product of two functions for consistency. Additionally, given $u \in \ell^1_{j,\nu}$, we define
\begin{align}\label{def : discrete conv operator}
    \mathbb{u} : \ell^1_{j,\nu} &\to \ell^1_{j,\nu} \\
    v &\mapsto  u * v
\end{align}
 the discrete convolution operator associated to $u$. To begin, we will state the main theorem we wish to apply. Following this, we will construct various quantities needed to apply this theorem. Then, we will estimate the needed bounds which we will then rigorously evaluate on the computer. Let us begin with the main theorem.
 \subsection{Newton-Kantorovich Approach}
 As we are considering the 2D Swift Hohenberg PDE, we write
 \begin{align}
    f(u) = Lu + G(u),~L \bydef (I_d + \nabla^2)^2 + \mu I_d, ~~ G(u) \bydef -\gamma u^2 + u^3.\label{def : L and G sh 2d}
\end{align}
Let $S$ be a Banach space such that $f : \ell^1_{j,\nu} \to S$. Suppose we have $\overline{u} \in \ell^1_{j,\nu}$ an approximate solution to \eqref{def : L and G sh 2d}. That is, $f(\overline{u}) \approx 0$. Then, we state the following theorem.
 \begin{theorem}\label{th : radii polynomial theorem}
Let $A^\dagger \in \mathcal{B}(\ell^1_{j,\nu},S)$. Also let $A \in \mathcal{B}(S,\ell^1_{j,\nu})$ be injective. Moreover, let $Y_0, Z_0, Z_1$ be non-negative constants, and let $Z_2(r) : (0,\infty) \to [0,\infty)$ be a non-negative function such that
\begin{align}
    &\|Af(\overline{u})\|_{1,\nu} \leq Y_0 \\
    &\|I_d - AA^\dagger\|_{\mathcal{B}(\ell^1_{j,\nu})} \leq Z_0 \\
    &\|A(Df(\overline{u}) - A^\dagger)\|_{\mathcal{B}(\ell^1_{j,\nu})} \leq Z_1 \\
    &\|A(Df(u) - Df(\overline{u}))\|_{\mathcal{B}(\ell^1_{j,\nu})} \leq Z_2(r) r ~ \text{for} ~ \text{all} ~ u \in B_r(\overline{u}).
\end{align}
If there exists $r>0$ such that
\begin{equation}\label{condition radii polynomial}
    \frac{1}{2}Z_2(r)r^2 - (1-Z_0 - Z_1)r + Y_0 <0, \ and \ Z_1 + Z_0 + Z_2(r)r < 1
 \end{equation}
then there exists a unique $\tilde{u} \in \overline{B_r(\overline{u})} \subset \ell^1_{j,\nu}$ such that $f(\tilde{u})=0$, where $B_r(\overline{u})$ is the open ball of $\ell^1_{j,\nu}$ centered at $\overline{u}$ with radius $r$.
 \end{theorem}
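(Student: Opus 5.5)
The plan is the standard Newton--Kantorovich contraction argument. Define the fixed point operator
\begin{align}
    T(u) \bydef u - Af(u), \quad u \in \ell^1_{j,\nu}.
\end{align}
Since $A$ is injective, fixed points of $T$ are exactly the zeros of $f$. So it suffices to show that $T$ maps the closed ball $\overline{B_r(\overline{u})}$ into itself and is a contraction there. The Banach fixed point theorem then gives existence and uniqueness of $\tilde{u}$ in $\overline{B_r(\overline{u})}$.

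\textbf{Step 1: bound on $DT$.} Here $f$ is a polynomial in $u$ (via discrete convolutions) composed with $L$, so $f$ is Fr\'echet differentiable and $DT(u) = I_d - ADf(u)$. Split
\begin{align}
    I_d - ADf(u) = (I_d - AA^\dagger) - A(Df(\overline{u}) - A^\dagger) - A(Df(u) - Df(\overline{u})).
\end{align}
By the hypotheses, for every $u \in \overline{B_r(\overline{u})}$,
\begin{align}
    \|DT(u)\|_{\mathcal{B}(\ell^1_{j,\nu})} \leq Z_0 + Z_1 + Z_2(r)\|u-\overline{u}\|_{1,\nu} \leq Z_0 + Z_1 + Z_2(r) r .
\end{align}
The $Z_2$ hypothesis is stated for the open ball; it extends to the closed ball by continuity of $Df$, which holds since $f$ is polynomial.

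\textbf{Step 2: self-map.} For $u \in \overline{B_r(\overline{u})}$, write
\begin{align}
    T(u) - \overline{u} = -Af(\overline{u}) + \int_0^1 DT(\overline{u} + t(u-\overline{u}))\,(u-\overline{u})\,dt .
\end{align}
Apply Step 1 at the point $\overline{u} + t(u-\overline{u})$, whose distance from $\overline{u}$ is $t\|u-\overline{u}\|_{1,\nu}$. This gives
\begin{align}
    \|T(u)-\overline{u}\|_{1,\nu} \leq Y_0 + (Z_0+Z_1) r + \frac{1}{2}Z_2(r) r^2 .
\end{align}
The factor $\frac12$ comes from $\int_0^1 t\,dt$. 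Keeping this factor requires applying the $Z_2$ bound with radius $t r$ while still using the constant $Z_2(r)$. That is legitimate as long as the $Z_2$ hypothesis is read as $\|A(Df(v)-Df(\overline{u}))\| \leq Z_2(r)\|v-\overline{u}\|_{1,\nu}$ for all $v$ in the ball, which is how such bounds are obtained in practice. If only the weaker reading is available, one uses monotonicity of $Z_2$ or drops to the cruder estimate $Y_0 + (Z_0+Z_1+Z_2(r)r)r$.

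The first inequality in \eqref{condition radii polynomial} says exactly that this quantity is $< r$. Hence $T$ maps $\overline{B_r(\overline{u})}$ into $B_r(\overline{u})$.

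\textbf{Step 3: contraction.} By the mean value inequality on the convex set $\overline{B_r(\overline{u})}$ and Step 1, $T$ is Lipschitz there with constant $\kappa = Z_0 + Z_1 + Z_2(r) r$. The second inequality in \eqref{condition radii polynomial} gives $\kappa < 1$. The closed ball of the Banach space $\ell^1_{j,\nu}$ is complete, so Banach's fixed point theorem yields a unique fixed point $\tilde{u}$, and by injectivity of $A$ we get $f(\tilde{u}) = 0$.

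\textbf{Expected obstacle.} The delicate point is the extraction of the $\frac12$ factor in Step 2, together with making sure $T$ really maps $\ell^1_{j,\nu}$ into itself. The latter needs $f(u) \in S$ and $A: S \to \ell^1_{j,\nu}$ bounded, which holds by hypothesis. The symmetry class is preserved automatically, since $\ell^1_{j,\nu}$ already encodes the $D_j$ relations of Lemma \ref{lem : G-fourier series}.
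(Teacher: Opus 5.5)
Your argument is the standard Newton--Kantorovich contraction proof with $T(u)=u-Af(u)$. The paper gives no proof of its own and defers to \cite{van2021spontaneous}, whose argument is this same one. It is correct provided the $Z_2$ hypothesis is used in the Lipschitz form $\|A(Df(v)-Df(\overline{u}))\|_{\mathcal{B}(\ell^1_{j,\nu})}\le Z_2(r)\|v-\overline{u}\|_{1,\nu}$ on the ball, or $Z_2$ is nondecreasing, exactly as you flag.

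One correction: your ``cruder estimate'' fallback does not close. The first inequality in \eqref{condition radii polynomial} only yields $Y_0+(Z_0+Z_1)r+\frac{1}{2}Z_2(r)r^2<r$, not $Y_0+(Z_0+Z_1+Z_2(r)r)r<r$. So the factor $\frac12$ genuinely rests on the Lipschitz or monotone reading. That reading does hold in this paper: the proof of Lemma \ref{lem : Z2} actually gives the bound $\max\bigl(\|A^N\|_{\mathcal{B}(\ell^1_{j,\nu})},\frac{1}{L_N}\bigr)(\|q\|_{1,\nu}+3r)\|u-\overline{u}\|_{1,\nu}$.
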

 \begin{proof}
 The proof can be found in \cite{van2021spontaneous}.
 \end{proof}
 \subsection{Numerical Aspects}\label{sec : numerical aspects}
In this section, we will discuss the numerical aspects of our approach. We first introduce some notation. Define
\begin{align}
    I^N \bydef \{n \in \mathcal{Z}_{\mathrm{red}}(D_j), |n_1| \leq N, |n_2| \leq N\}.\label{def : IN}
\end{align}
Let $v = (v_n)_{n \in \mathcal{Z}_{\mathrm{red}}(D_j)}$
\begin{align}
    (\pi^N v)_n \bydef \begin{cases}
        v_n & n \in I^N \\
        0 & \mathrm{else}
    \end{cases},~~ (\pi_N v)_n \bydef \begin{cases}
        0 & n \in I^N \\
        v_n & \mathrm{else}
    \end{cases}.\label{def : projection operators}
\end{align}
To obtain a numerical approximation of the solution on $\parallelogram_0$, we relied on a naive approach. We would first choose the number of Fourier coefficients $N$. Next, we randomly generated a grid of points of the corresponding size, $2N+1 \times 2N+1$. We think of this random grid as representing a random function, denoted $\overline{U}_2$. We then computed a Fourier series approximation of the function $\overline{U}_2$. That is, we computed $(\overline{a}_n)_{n \in \{-N,\dots,N\}^2}$ such that
\begin{align}
    \overline{U}_1(x) \bydef \sum_{n \in \{-N,\dots,N\}^2} \overline{a}_n e^{ i \mathcal{L}\tilde{n} \cdot x}\label{u_1bar}
\end{align}
and we should have $\overline{U}_2 \approx \overline{U}_1$.
At this step, we do not expect the coefficients $\overline{a}_n$ to satisfy the symmetry relation for $D_j$. Indeed, since $\overline{U}_2$ was a randomly chosen function, it has no reason to be $D_j$-symmetric and hence neither does its Fourier series approximation $\overline{U}_1$. As we are only interested in $D_j$-symmetric solutions for this paper, we would simply choose one element from each orbit, and defined
\begin{align}
    \overline{U}_0(x) \bydef \sum_{n \in I^N} \overline{a}_n \sum_{k \in \mathrm{orb}_{D_j}(n)} e^{ i \mathcal{L}\tilde{k} \cdot x}.\label{u_0bar}
\end{align}
Note that the choice of element from each orbit is not important to us as we are building a random initial guess. From here, we run a Newton method on $\overline{U}_0$ to refine our initial guess. More often than not, Newton's method would diverge as our guess was random and not particularly close to a solution. After a sufficient amount of attempts, our Newton method would converge. We denote the result of the convergence $\overline{u} \bydef (\overline{u}_n)_{n \in I^N}$. We then theoretically set all remaining coefficients to $0$, and write $\overline{u} \bydef (\overline{u}_n)_{n \in \mathcal{Z}_{\mathrm{red}}(D_j)}$. We use this as our approximate solution for the proof. Our approach is not guaranteed to work for any PDE and solution; however, it was sufficient for our purposes as we managed to find multiple candidates where Newton converged. 
\par Let us now construct $A$ and $A^\dagger$. These choices will consist of two parts: a finite part which can be represented as a matrix, and an infinite part which we will control. $A^\dagger$ will be an approximation of $Df(\overline{u})$. That is, for a given $h \in \ell^1_{j,\nu}$, we define
\begin{align}
    (A^\dagger h)_n \bydef \begin{cases}
        [\pi^N Df(\overline{u})\pi^N h]_n & n \in I^N \\
        ((1 + |\mathcal{L}\tilde{n}|^2)^2 + \mu) h_n & n \in \mathcal{Z}_{\mathrm{red}}(D_j) \setminus I^N
    \end{cases}.\label{def : Adagger}
\end{align}
Notice that for $n \in I^N$, we consider the action of $\pi^N Df(\overline{u})\pi^N$ on $h$. This operator can be represented as a finite matrix. We then only consider the action of the linear part in the tail. This leads us to a natural way to define $A$, which is supposed to be an approximate inverse for $Df(\overline{u})$. This is chosen so that the bound $Z_1$ in Theorem \ref{th : radii polynomial theorem} is small. In particular, we choose
\begin{align}
    A^N \approx (\pi^N Df(\overline{u})\pi^N)^{-1}.
\end{align}
As mentioned before, the operator $A^N$ can be represented a matrix, which can be stored on the computer. This is the finite part of $A$. For the tail, rather than considering the full $Df(\overline{u})^{-1}$, we only choose the inverse of the linear part, $L$. This leads us to define
\begin{align}
    (Ah)_n \bydef \begin{cases}
        (A^N \pi^N h)_{n} & n \in I^N \\
        \frac{h_n}{(1 + |\mathcal{L}\tilde{n}|^2)^2 + \mu} & n \in \mathcal{Z}_{\mathrm{red}}(D_j) \setminus I^N
    \end{cases}.
\end{align}
With this definition, observe that
\begin{align}
    \|\pi_N A\|_{\mathcal{B}(\ell^1_{j,\nu})} &\leq \max_{n \in \mathbb{Z}^2 \setminus I^N} \frac{1}{|(1 + |\mathcal{L}\tilde{n}|^2)^2 + \mu|} \bydef \frac{1}{L_N}.
\end{align}
where 
\begin{align}
    L_N \bydef \min_{n \in \mathbb{Z}^2 \setminus I^N} |(1 + |\mathcal{L}\tilde{n}|^2)^2 + \mu|\label{def : L_N}
\end{align}
We will frequently use this bound in our computations to follow. With $\overline{u}, A^\dagger$, and $A$ now detailed in construction, let us describe how we enforce the $D_3$ and $D_6$ symmetry in these objects. 
 \subsubsection{Enforcing \texorpdfstring{$D_3$}{D3} and \texorpdfstring{$D_6$}{D6} symmetries in Fourier series}\label{sec : dihedral.jl}
Our approach relies on a the ability to construct functions of the form \eqref{def : H_fourier series} and rigorous numerics. To do so, we rely primarily on two Julia packages. The first is IntervalArithmetic.jl, \cite{julia_interval}. This package allows us to perform computations on intervals, which is necessary to obtain rigorous results. The second is RadiiPolynomial.jl, \cite{julia_olivier}. This package is designed with many useful tools for computer assisted proofs. One of its main tools is the ability to store vectors as "sequence structures." By a sequence structure, we mean a vector that corresponds to a sequence of Fourier coefficients. By storing our data in its corresponding sequence structure, we are able to rely on a variety of operations built into \cite{julia_olivier}. In particular, this package allows us to perform addition, subtraction, convolutions, build multiplication operators, build linear operators, compute norms, take derivatives, etc. One can do this provided that the sequence structure with the desired symmetry they wish to prove exists in \cite{julia_olivier}. This is not the case for this project. In fact, \cite{julia_olivier} only allows one to enforce up to $D_2$ symmetry. For dihedral groups, this limitation was partially addressed by one of the authors of \cite{gs_cadiot_blanco}. Indeed, one of the authors developed a code for $D_4$-symmetric sequences. This sequence structure was then used to perform the proofs of Theorems 6.1, 6.2, 6.3, and 6.4 in Section 6 of \cite{gs_cadiot_blanco}. The code is available at \cite{dominic_D_4_julia} and is fully compatible with \cite{julia_olivier}. 
\par Since we require sequence structures for $D_3$ and $D_6$ symmetry, we extended the work done in \cite{dominic_D_4_julia} to include $D_3$ and $D_6$ sequence structures. The result, which we will call dihedral.jl can be found on Github at \cite{dominic_dihedral_julia}. This code can be viewed as an add-on to \cite{julia_olivier}. More specifically, the sequence structures implemented in \cite{dominic_dihedral_julia} are fully compatible with the tools of \cite{julia_olivier}. Our contribution was to create the sequence structures for the dihedral symmetries and define the operations necessary to perform operations such as addition, subtraction, convolution, build multiplication operators, build linear operators, compute norms, etc. One of the challenges we would like to emphasize is that we needed to find a way to store the reduced sets, $\mathcal{Z}_{\mathrm{red}}(D_j)$ for $j = 3,6$. As mentioned earlier, a sequence structure is a vector whose entries represent the indices of a sequence with a specific symmetry. Therefore, the first challenge is to find a mapping between the entries of a vector and the indices of a sequence. Mathematically speaking, we are required to find a mapping from $\mathcal{Z}_{\mathrm{red}}(D_j) \to \mathbb{N}$. In general, this task is non-trivial. By studying $\mathcal{Z}_{\mathrm{red}}(D_j)$ for $j = 3,6$, we were eventually able to find a mappings which worked. We describe the details on these mapping and how they are defined. Let 
\begin{align}
    \mathbb{N}_0^{\mathcal{K}} \bydef \{0,1,\dots,\mathcal{K}\}.
\end{align}
That is $\mathbb{N}_0^{\mathcal{K}}$ is the set of numbers from $0$ to $\mathcal{K}$. Suppose we have a sequence of size $\mathcal{N}$. That is, the indices live in $I^{\mathcal{N}}$. We must now find a mapping $\psi_j : I^{\mathcal{N}} \to \mathbb{N}_0^{|I^{\mathcal{N}}|}$ for $j = 3,6$ so that we can identify the indices of a sequence with the entries of a vector. For $D_3$, the mapping is
\begin{align}
    \psi_3 : I^{\mathcal{N}} &\to \mathbb{N}_0^{|I^{\mathcal{N}}|} \\
    (n_1,n_2) &\mapsto n_1 + n_2 \left(\frac{\mathcal{N}}{2} + 1\right) - n_2 + 1.
\end{align}
For $D_6$, it is
\begin{align}
    \psi_6 : I^{\mathcal{N}} &\to \mathbb{N}_0^{|I^{\mathcal{N}}|} \\
    (n_1,n_2) &\mapsto n_1 + n_2\frac{\mathcal{N}}{2} - \frac{(n_2 - 1)^2 + 3(n_2 - 1)}{2}.
\end{align}
The details can be found in \cite{dominic_dihedral_julia}.
Additionally, the sets $I^{\mathcal{N}}$ for $D_3$ and $D_6$ do not completely fill the indices of $\{-\mathcal{N},\dots,\mathcal{N}\}^2$. For example, $(5,1) \in \mathrm{orb}_{D_6}((4,-1))$. Therefore, in order to include $(4,-1)$, we must also include $(5,1)$. That means that $(4,-1) \notin I^4$, and one must set the index $(4,-1)$ to $0$. The reason is due to the definition of $I^{\mathcal{N}}$ in \ref{def : IN}, which states that we must only includes indices whose entries are less than $\mathcal{N}$ in absolute value. This leads to some numerical complications. More specifically, it turns out that the orbits of the entries of $I^{\mathcal{N}}$ for $D_3$ and $D_6$ only contain every index for $|n_1|, |n_2| \leq \frac{\mathcal{N}}{2}$. This means that a $D_3$ or $D_6$ Fourier series of size $\mathcal{N}$ can be viewed as a Fourier series of order $\frac{\mathcal{N}}{2}$ plus a few more indices up to order $\mathcal{N}$. To formalize this, an index $n \in \{-\mathcal{N},\dots,\mathcal{N}\}^2$ is considered a valid index for $D_3$ if 
\begin{align}
    n_2 \in \left(0,\min\left(n_1,\frac{\mathcal{N}}{2}\right)\right)~ \text{and} ~ n_1 - n_2 \in \left(-\frac{\mathcal{N}}{2},\frac{\mathcal{N}}{2}\right).
\end{align}
For $D_6$, an index is valid if
\begin{align}
    n_2 \in \left(0,\frac{\mathcal{N}}{2}\right) ~ \text{and} ~ n_1 \in \left(2n_2,\frac{\mathcal{N}}{2} + n_2\right).
\end{align}
In practice, this makes proofs which use these symmetries require additional runtime than those done with symmetries expressed on the square lattice. The primary culprit of the slow runtime is convolution since those are performed by unfolding the sequence. When one unfolds this sequence to perform convolution, it will numerically behave as though it is a sequence of size $\mathcal{N}$ despite the number of $0$ indices in this set. This does not occur on the square lattice and $D_4$ for instance since the set $\{-\mathcal{N},\dots,\mathcal{N}\}^2$ will not have automatic zeros by symmetry. As a result, every step of the convolution contributes. The benefits still outweigh this problem as we can not only enforce the symmetry, but the proofs take less memory since matrices are smaller with less coefficients.
\par With the sequence structures at \cite{dominic_dihedral_julia} available, we can indeed restrict the indices to $\mathcal{Z}_{\mathrm{red}}(D_j)$ and define $\overline{u}_0$ as in \eqref{u_0bar}. Once we run Newton on $\overline{u}_0$ to obtain $\overline{u}$, we can evaluate $\pi^N Df(\overline{u})\pi^N$ on the computer. Once we have this matrix available, we can invert it, and obtain our approximate inverse $A^N$ under the $D_j$-symmetry. With these objects now constructed numerically, we are ready to compute the bounds of Theorem \ref{th : radii polynomial theorem}. This will be the focus of the next section.
\subsection{Computing the Bounds}\label{sec : bounds}
We will now compute the bounds $Y_0, Z_0, Z_1,$ and $Z_2$. Recall \eqref{def : projection operators} as they will be frequently used throughout this section. Let us begin with $Y_0$ and $Z_0$.
\begin{lemma}\label{lem : Y0 and Z0}
Let $L_{N}$ be defined as in \eqref{def : L_N}. Let $Y_0, Z_0 > 0$ be defined as 
\begin{align}
    Y_0 \bydef \|A^N f(\overline{u})\|_{1,\nu} + \frac{1}{L_N} \|(\pi^{3N} - \pi^N) G(\overline{u})\|_{1,\nu},~Z_0 \bydef \|\pi^N - A^N \pi^N Df(\overline{u})\pi^N\|_{\mathcal{B}(\ell^1_{j,\nu})}.
\end{align}
Then, it follows that $\|Af(\overline{u})\|_{1,\nu} \leq Y_0$ and $\|I_d - AA^{\dagger}\|_{\mathcal{B}(\ell^1_{j,\nu})} = Z_0$.
\end{lemma}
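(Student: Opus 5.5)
The plan is to split every quantity into its finite part (indices in $I^N$) and its tail (indices in $\mathcal{Z}_{\mathrm{red}}(D_j)\setminus I^N$). This works because $A$ and $A^\dagger$ are both block diagonal with respect to the decomposition $\ell^1_{j,\nu} = \pi^N\ell^1_{j,\nu} \oplus \pi_N \ell^1_{j,\nu}$. The weighted norm $\|\cdot\|_{1,\nu}$ is a sum over indices, so for any $h$ we have $\|h\|_{1,\nu} = \|\pi^N h\|_{1,\nu} + \|\pi_N h\|_{1,\nu}$.

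\textbf{Bound $Y_0$.} Write $Af(\overline{u}) = A^N\pi^N f(\overline{u}) + \pi_N A f(\overline{u})$.

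The first term gives $\|A^N f(\overline{u})\|_{1,\nu}$, reading $A^N f(\overline{u})$ as $A^N\pi^N f(\overline{u})$.

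For the tail, note that $\overline{u}$ has support in $I^N$. Hence $L\overline{u}$ is also supported in $I^N$, since $L$ is diagonal with entries $(1+|\mathcal{L}\tilde n|^2)^2+\mu$. So for $n\notin I^N$, $f(\overline{u})_n = G(\overline{u})_n$.

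Next, $G(\overline{u}) = -\gamma\,\overline{u}*\overline{u} + \overline{u}*\overline{u}*\overline{u}$ has support in the set of indices with $|n_1|,|n_2|\le 3N$. This holds because a convolution of sequences supported in $\{|n_i|\le N\}$ is supported in the Minkowski sum. The small point to check is that this remains true after unfolding the $D_j$ orbits. I expect it does, because the unfolded support of $\overline{u}$ lies in $\{-N,\dots,N\}^2$: by construction the orbits of $I^N$ stay in that box. Thus $\pi_N G(\overline{u}) = (\pi^{3N}-\pi^N)G(\overline{u})$.

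On the tail, $A$ acts diagonally with entries bounded in modulus by $1/L_N$. This gives $\|\pi_N Af(\overline{u})\|_{1,\nu} \le \frac{1}{L_N}\|(\pi^{3N}-\pi^N)G(\overline{u})\|_{1,\nu}$. Adding the two pieces yields the $Y_0$ bound.

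\textbf{Identity for $Z_0$.} Compute $AA^\dagger$ block by block.

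On the tail, $A^\dagger$ multiplies $h_n$ by $(1+|\mathcal{L}\tilde n|^2)^2+\mu$ and $A$ divides by the same quantity. This requires the quantity to be nonzero, which is implicit in $L_N>0$. So $\pi_N(I_d - AA^\dagger) = 0$.

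On the finite part, $A^\dagger$ maps $\pi^N h$ to $\pi^N Df(\overline{u})\pi^N h$ and stays in the range of $\pi^N$. Also, $A^\dagger$ maps the tail to the tail, and $A$ applied to the tail has zero finite component. Hence $\pi^N(I_d - AA^\dagger) = \pi^N - A^N\pi^N Df(\overline{u})\pi^N$.

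It follows that $I_d - AA^\dagger$ equals exactly this finite operator, extended by zero on the tail. Its operator norm on $\ell^1_{j,\nu}$ is therefore exactly $Z_0$, giving the equality.

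\textbf{Expected obstacle.} The only delicate step is the support claim for $G(\overline{u})$ in the symmetric setting. Convolution is performed on unfolded sequences and then refolded to $\mathcal{Z}_{\mathrm{red}}(D_j)$, so one must verify that $\pi^{3N}$, as defined through $\mathcal{Z}_{\mathrm{red}}(D_j)$ and box truncation, captures every nonzero coefficient. I would argue this through the orbit-invariance of the box support of the unfolded $\overline{u}$. If some orbit representative escaped the box, I would enlarge $3N$ accordingly; the structure of the bound stays the same.
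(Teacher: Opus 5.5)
Your proposal is correct and is essentially the paper's proof. For $Y_0$, you split into the $\pi^N$ and $\pi_N$ parts and use $\pi_N f(\overline{u}) = \pi_N G(\overline{u}) = (\pi^{3N}-\pi^N)G(\overline{u})$ together with $\|\pi_N A\|_{\mathcal{B}(\ell^1_{j,\nu})}\le 1/L_N$; for $Z_0$, you observe that the tails of $A$ and $A^\dagger$ cancel exactly, so $I_d - AA^\dagger = \pi^N - A^N\pi^N Df(\overline{u})\pi^N$. You also check explicitly that the support of $G(\overline{u})$ stays in $I^{3N}$ after unfolding the $D_j$ orbits. This holds because the numerically constructed $\overline{u}$ keeps its orbits inside the $N$-box, as the paper's zeroing of indices like $(4,-1)$ ensures. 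The paper uses this fact without comment, so your check is a useful clarification rather than a change of route.
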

\begin{proof}
For $Y_0$, we start from the definition.
\begin{align}
    \|Af(\overline{u})\|_{1,\nu} = \|\pi^N Af(\overline{u})\|_{1,\nu} + \|\pi_N A f(\overline{u})\|_{1,\nu}
    &= \|A^N f(\overline{u})\|_{1,\nu} + \|\pi_N A G(\overline{u})\|_{1,\nu} \\
    &\leq \|A^N f(\overline{u})\|_{1,\nu} + \|\pi_N A\|_{\mathcal{B}(\ell^1_{j,\nu})} \|\pi_NG(\overline{u}))\|_{1,\nu} \\
    &\leq \|A^N f(\overline{u})\|_{1,\nu} + \frac{1}{L_N} \|(\pi^{3N} - \pi^N) G(\overline{u})\|_{1,\nu}
\end{align}
as desired. For $Z_0$, notice that since the tail of $A$ and $A^\dagger$ cancel exactly, we are left with the finite part. Hence, the equality is immediate.
\end{proof}
Next, we examine the $Z_2$ bound. We first define the sequence $\delta = (\delta_n)_{n \in \mathcal{Z}_{\mathrm{red}}(D_j)}$ as
\begin{align}\label{kronecker delta}
    \delta_n \bydef \begin{cases}
        1 & n = 0 \\
        0 & \mathrm{else}
    \end{cases}.
\end{align}
We state its lemma.
\begin{lemma}\label{lem : Z2}
Let $u \in B_r(\overline{u})$. Let $q \bydef (q_n)_{n \in \mathcal{Z}_{\mathrm{red}}(D_j)} = (-2\gamma \delta_n + 6\overline{u}_n)_{n \in \mathcal{Z}_{\mathrm{red}}(D_j)}$ where $\overline{u} = (\overline{u}_n)_{n \in \mathcal{Z}_{\mathrm{red}}(D_j)}$ and $\delta_n$ is defined as in \eqref{kronecker delta}. Let $L_N$ be defined as in \eqref{def : L_N}. Now, let $Z_2(r) : (0,\infty) \to [0,\infty)$ be defined as
\begin{align}
    Z_2(r) \bydef  \max\left(\|A^N\|_{\mathcal{B}(\ell^1_{j,\nu})}, \frac{1}{L_N}\right)(\|q\|_{1,\nu} + 3r).
\end{align}
Then, $\|A(Df(u) - Df(\overline{u}))\|_{\mathcal{B}(\ell^1_{j,\nu})} \leq Z_2(r)r$.
\end{lemma}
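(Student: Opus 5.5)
We compute the difference of derivatives explicitly and then use the Banach algebra property of $\ell^1_{j,\nu}$ under discrete convolution. Since $L$ is linear, $Df(u) - Df(\overline{u}) = DG(u) - DG(\overline{u})$. For $G(u) = -\gamma u^2 + u^3$ we have $DG(u)h = -2\gamma u*h + 3u*u*h$. Writing $u = \overline{u} + w$ with $\|w\|_{1,\nu} < r$, we obtain
\begin{align}
    (DG(u) - DG(\overline{u}))h &= -2\gamma w*h + 3\left(2\overline{u}*w + w*w\right)*h \\
    &= \left(-2\gamma \delta + 6\overline{u}\right)*w*h + 3 w*w*h = q*w*h + 3w*w*h.
\end{align}
Here we use that $\delta$ is the identity for convolution, so that the coefficient sequence is exactly the $q$ of the statement.

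Next we bound $\|A\|_{\mathcal{B}(\ell^1_{j,\nu})}$. Since $A$ is block diagonal with respect to $\pi^N$ and $\pi_N$, and the norm is a weighted $\ell^1$ norm, we have for any $v$
\[
\|Av\|_{1,\nu} = \|A^N\pi^N v\|_{1,\nu} + \|\pi_N A v\|_{1,\nu} \leq \|A^N\|_{\mathcal{B}(\ell^1_{j,\nu})}\|\pi^N v\|_{1,\nu} + \frac{1}{L_N}\|\pi_N v\|_{1,\nu}.
\]
This is at most $\max\left(\|A^N\|_{\mathcal{B}(\ell^1_{j,\nu})}, \frac{1}{L_N}\right)\|v\|_{1,\nu}$, because $\|v\|_{1,\nu} = \|\pi^N v\|_{1,\nu} + \|\pi_N v\|_{1,\nu}$. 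The tail bound $\|\pi_N A\| \leq 1/L_N$ was already noted before \eqref{def : L_N}.

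Finally we apply the algebra inequality $\|a*b\|_{1,\nu} \leq \|a\|_{1,\nu}\|b\|_{1,\nu}$ on $\ell^1_{j,\nu}$. This gives
\[
\|(Df(u)-Df(\overline{u}))h\|_{1,\nu} \leq \left(\|q\|_{1,\nu}\|w\|_{1,\nu} + 3\|w\|_{1,\nu}^2\right)\|h\|_{1,\nu} \leq (\|q\|_{1,\nu} + 3r)\,r\,\|h\|_{1,\nu}.
\]
Combining this with the bound on $\|A\|$ yields the claimed estimate $Z_2(r)r$.

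The only step needing care is justifying the Banach algebra property in the symmetric weighted space. The norm $\|u\|_{1,\nu} = \sum_{n\in\mathcal{Z}_{\mathrm{red}}}\alpha_n|u_n|\nu^{n}$ with $\alpha_n = |\mathrm{orb}(n)|$ equals the full $\ell^1_\nu$ norm of the unfolded sequence on $\mathbb{Z}^2$. This holds because $|u_{\beta_g(n)}| = |u_n|$ by Lemma \ref{lem : G-fourier series} (as $|\alpha_g|=1$), and the weight $\nu^{|n|}$ is constant on orbits provided it is taken with an orbit-invariant (e.g. hexagonal) norm $|n|$. Convolution of unfolded sequences is the usual one, which is submultiplicative in $\ell^1_\nu$ since $\nu^{|n|}\le\nu^{|n-k|}\nu^{|k|}$ by the triangle inequality. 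The product of $D_j$-symmetric functions is $D_j$-symmetric, so the result folds back into $\ell^1_{j,\nu}$ with the same norm.
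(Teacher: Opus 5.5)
Your proposal is correct and follows the paper's proof essentially step for step: you pull out $\|A\|_{\mathcal{B}(\ell^1_{j,\nu})}\le\max(\|A^N\|_{\mathcal{B}(\ell^1_{j,\nu})},1/L_N)$, expand $(DG(\overline{u}+w)-DG(\overline{u}))h=(q*w+3w*w)*h$, and close with the Banach algebra property of $\ell^1_{j,\nu}$. Your extra justifications fill in steps the paper only asserts: the block splitting that gives the bound on $\|A\|$, and the observation that the weight $\nu^{|n|}$ must be orbit-invariant (e.g.\ via the hexagonal norm) for $\ell^1_{j,\nu}$ to inherit submultiplicativity from $\ell^1_\nu$.
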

\begin{proof}
To begin, observe that
\begin{align}
    \|A(Df(u) - Df(\overline{u}))\|_{\mathcal{B}(\ell^1_{j,\nu})} = \|A(DG(u) - DG(\overline{u}))\|_{\mathcal{B}(\ell^1_{j,\nu})}
    &\leq \|A\|_{\mathcal{B}(\ell^1_{j,\nu})} \|DG(u) - DG(\overline{u})\|_{\mathcal{B}(\ell^1_{j,\nu})} \\
    &\hspace{-2.3cm}\leq \max\left(\|A^N\|_{\mathcal{B}(\ell^1_{j,\nu})}, \frac{1}{L_N}\right) \|DG(u) - DG(\overline{u})\|_{\mathcal{B}(\ell^1_{j,\nu})}.\label{pulling_out_A_Z2}
\end{align}
Now, since $u \in B_r(\overline{u})$, there exists a $v \in B_r(0)$ such that $u = \overline{u} + v$. Note that $\|v\|_{1,\nu} \leq r$. Also let $h \in \ell^1_{j,\nu}$ such that $\|h\|_{1,\nu} \leq 1$. Then, we compute
\begin{align}
    (DG(u) - DG(\overline{u}))h = (DG(\overline{u} + v) - DG(\overline{u}))h
    &= (-2\gamma (\overline{u} + v) + 3(\overline{u} + v)^2 + 2\gamma \overline{u} - 3\overline{u}^2)h \\
    &= ((-2\gamma I_d + 6\overline{u})v + 3v^2)h \\
    &= (qv + 3v^2)h.
\end{align}
Using this, we obtain
\begin{align}
    \|DG(u) - DG(\overline{u})\|_{\mathcal{B}(\ell^1_{j,\nu})} \leq \|qvh + 3v^2h\|_{1,\nu}
    &\leq \|q\|_{1,\nu} \|v\|_{1,\nu} \|h\|_{1,\nu} + 3\|v\|_{1,\nu}^2 \|h\|_{1,\nu} \leq \|q\|_{1,\nu} r + 3r^2\label{before banach algebra}
\end{align}
where we used that $\ell^1_{j,\nu}$ (see \ref{ell1Gnu}) is a Banach algebra. Using this in \eqref{pulling_out_A_Z2} yields the result.
\end{proof}
\begin{remark}
The computation of $Z_2$ was done primarily in the interest of computational intensity. Indeed, step \eqref{pulling_out_A_Z2} is not a particularly sharp estimate. For instance, one could keep $A$ inside rather than using submultiplicativity immediately such as what was done in \cite{sh_cadiot}. We did so in order to simplify the estimates done on the computer and increase the speed of the code. This is of higher interest in Section \ref{sec : continuation}, where the runtime becomes an issue. 
\end{remark}
Before we compute the $Z_1$ bound, we define $\overline{v} \bydef (\overline{v}_n)_{n \in \mathcal{Z}_{\mathrm{red}}(D_j)} = (-2\gamma \overline{u}_n + 3(\overline{u}*\overline{u})_n)_{n \in \mathcal{Z}_{\mathrm{red}}(D_j)}$. Notice that by definition, we have $\overline{v} = \pi^{2N} \overline{v}$. We will now state the lemma for $Z_1$.
\begin{lemma}\label{lem : Z1}
Recall that $\overline{v} \bydef \pi^{2N} \overline{v}$. Let $\overline{V} = (\overline{V}_n)_{n \in \mathcal{Z}_{\mathrm{red}}(D_j)},\phi = (\phi_n)_{n \in \mathcal{Z}_{\mathrm{red}}(D_j)} \in \ell^1_{j,\nu}$ where
\begin{align}
    \overline{V}_n \bydef \begin{cases}
        0 & n = (0,0) \\
        \overline{v}_n & \mathrm{else}
    \end{cases}, ~~\phi_n \bydef \begin{cases} \frac{\|\overline{V}\|_{\infty}}{\nu^{N+1}} & n \in I^N \\
    0 & \mathrm{else} 
    \end{cases}.
\end{align}
Let $Z_1 > 0$ be defined as
\begin{align}
    &Z_1 \bydef  \|A^N \phi\|_{1,\nu} + \frac{1}{L_N} \|\overline{v}\|_{1,\nu} .
\end{align}
Then, it follows that $\|A(Df(\overline{u}) - A^\dagger)\|_{\mathcal{B}(\ell^1_{j,\nu})} \leq Z_1$.
\end{lemma}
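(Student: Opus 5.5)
The first step is to write down $Df(\overline{u}) - A^\dagger$ explicitly. Since $Df(\overline{u})h = Lh + \overline{v}*h$ and $L$ is diagonal, the definition \eqref{def : Adagger} of $A^\dagger$ shows that the linear parts cancel in both the finite and the tail parts. What remains is
\begin{align}
    (Df(\overline{u}) - A^\dagger)h = \overline{v}*h - \pi^N \left(\overline{v}*(\pi^N h)\right) = \pi^N\left(\overline{v}*(\pi_N h)\right) + \pi_N\left(\overline{v}*h\right).
\end{align}
We take $h \in \ell^1_{j,\nu}$ with $\|h\|_{1,\nu}\le 1$ and bound $A$ applied to each piece separately. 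Applying $A$ gives
\begin{align}
    A(Df(\overline{u}) - A^\dagger)h = A^N \pi^N\left(\overline{v}*(\pi_N h)\right) + \pi_N A\,\pi_N\left(\overline{v}*h\right).
\end{align}
We use the orthogonality of the block structure of $A$: $\pi^N A = A^N\pi^N$ and $\pi_N A = \pi_N A\pi_N$. The norm of $A(Df(\overline{u}) - A^\dagger)$ is then at most the sum of the norms of the two terms.

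The tail term is easy. By the bound $\|\pi_N A\|_{\mathcal{B}(\ell^1_{j,\nu})}\le 1/L_N$ and the Banach algebra property of $\ell^1_{j,\nu}$ already used in Lemma \ref{lem : Z2}, we get $\|\pi_N A\pi_N(\overline{v}*h)\|_{1,\nu}\le \frac{1}{L_N}\|\overline{v}\|_{1,\nu}\|h\|_{1,\nu}$. This gives the second summand of $Z_1$.

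The finite term is the main obstacle. We must bound $\pi^N(\overline{v}*\pi_N h)$ componentwise and uniformly over $\|h\|_{1,\nu}\le 1$, so that $A^N$ can act on a fixed positive vector. First, the constant mode $\overline{v}_0$ does not contribute: $\overline{v}_0\,(\pi_N h)_n$ is supported outside $I^N$, so it is killed by $\pi^N$. Hence we may replace $\overline{v}$ by $\overline{V}$. For $n\in I^N$ we then write $(\overline{V}*\pi_N h)_n=\sum_{k\notin I^N}\overline{V}_{n-k}h_k$ in the unfolded indexing, and bound it by
\begin{align}
    \|\overline{V}\|_\infty\sum_{k\notin I^N}|h_k| \le \|\overline{V}\|_\infty \sup_{k\notin I^N}\nu^{-|k|}\,\|h\|_{1,\nu}\le \frac{\|\overline{V}\|_\infty}{\nu^{N+1}} = \phi_n .
\end{align}
The care needed here is that the weights $\alpha_k$ and the orbit unfolding only help, since $\alpha_k\ge1$. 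The weight $\nu^{|k|}$ is at least $\nu^{N+1}$ outside $I^N$, and a slight subtlety with the non-filled indices of $I^N$ described in Section \ref{sec : dihedral.jl} must be checked so that this holds. So $|\pi^N(\overline{V}*\pi_Nh)|\le\phi$ componentwise.

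We then need $\|A^N w\|_{1,\nu}\le\|A^N\phi\|_{1,\nu}$ whenever $|w|\le\phi$. This does not follow from a componentwise bound alone. We would argue it via $\| |A^N|\phi\|_{1,\nu}$, using entrywise absolute values, or interpret $\|A^N\phi\|_{1,\nu}$ as computed with $|A^N|$ on the computer. Taking the supremum over $h$ and adding the two terms then gives the claimed bound $Z_1$.
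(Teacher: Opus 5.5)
Your proposal follows the paper's proof step for step. It uses the same splitting $\|Az\|_{1,\nu}=\|A^N\pi^N z\|_{1,\nu}+\|\pi_N A z\|_{1,\nu}$ and the same identity $\pi^N z=\pi^N(\overline{v}*\pi_N h)$. It uses the same componentwise domination by $\phi$, relying on $\nu^{|m|}\ge\nu^{N+1}$ off $I^N$; the paper uses this inequality without comment in \eqref{maximum_step}, so you are not missing anything there. The tail bound $\frac{1}{L_N}\|\overline{v}\|_{1,\nu}$ is also the same. The differences are cosmetic. You discard the $(0,0)$ mode at the outset, because $\pi^N$ annihilates $\overline{v}_{0}\,\pi_N h$, whereas the paper does it at the end via $k\neq m$. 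You bound the tail through $\pi_N z=\pi_N(\overline{v}*h)$. This is slightly cleaner than the paper's $\|z\|_{1,\nu}\le\|\overline{v}\|_{1,\nu}\|h\|_{1,\nu}$, which is true but needs a line of justification since $z$ also contains $\pi^N(\overline{v}*\pi_N h)$.

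Your caveat about the final step is correct, and it exposes a step the paper asserts without justification in \eqref{finite part Z1}. A componentwise bound $|\pi^N z|\le\phi$ does not give $\|A^N\pi^N z\|_{1,\nu}\le\|A^N\phi\|_{1,\nu}$ when $A^N$ has entries of both signs. For example, a row $(1,-1)$ contributes $0$ to $A^N\phi$ for $\phi=(c,c)$, but $2c$ for $w=(c,-c)$. What the argument (yours and the paper's) actually proves is the estimate with $\||A^N|\phi\|_{1,\nu}$ in place of $\|A^N\phi\|_{1,\nu}$, since $\|A^N w\|_{1,\nu}\le\||A^N|\,|w|\|_{1,\nu}\le\||A^N|\phi\|_{1,\nu}$. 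With $Z_1$ read that way, and evaluated that way in interval arithmetic, your proof is complete. The lemma as literally written, with $\|A^N\phi\|_{1,\nu}$, is not established by this route.
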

\begin{proof}
To begin, given $h \in \ell^1_{j,\nu}, \|h\|_{1,\nu} \leq 1$, let
\begin{align}
    z \bydef (Df(\overline{u}) - A^\dagger) h.
\end{align}
Then, observe that we can write
\begin{align}
    \|A(Df(\overline{u}) - A^\dagger)\|_{\mathcal{B}(\ell^1_{j,\nu})} = \|A(Df(\overline{u}) - A^\dagger)h\|_{1,\nu} = \|Az\|_{1,\nu} = \|A^N \pi^N z\|_{1,\nu} + \|\pi_N A z\|_{1,\nu}.\label{split Z1}
\end{align}
We now have two terms in \eqref{split Z1} to bound. For the first term, notice that for $n \in I^N$ we can write
\begin{align}
    (\pi^N z)_n = \sum_{m \in \mathbb{Z}^2} \overline{v}_{n-m} h_n - \sum_{m \in I^N} \overline{v}_{n-m} h_{m} = \sum_{m \in \mathbb{Z}^2 \setminus I^N} \overline{v}_{n-m} h_{m} = (\overline{v} *\pi_N h)_n
\end{align}
Now, we claim that $|(\overline{v}* \pi_N h)_n| \leq \phi_n$ for all $n \in I^N$. First, let $W_1(n) \subset \mathbb{Z}^2$ be the set of $m$ for which $\overline{v}_{n - m} \neq 0$. Also let $W_2 \bydef \mathbb{Z}^2 \setminus I^N,$ which is the set of $m$ for which $\pi_N h \neq 0$. Then, observe that
\begin{align}
    |(\overline{v} *\pi_N h)_n| \leq \left| \sum_{m \in \mathbb{Z}^2} \overline{v}_{n -m} (\pi_N h)_m\right| &\leq \sum_{m \in W_1(n) \cap W_2} \left(\frac{|\overline{v}_{n - m}|}{\nu^{|m|}}\right) |h_m| \nu^{|m|} \\
    &\leq \sum_{m \in W_1(n) \cap W_2} \left(\sup_{k \in W_1(n) \cap W_2} \frac{|\overline{v}_{n-k}|}{\nu^{|k|}}\right) |h_m| \nu^{|m|} \\
    &\leq \sup_{m \in W_1(n) \cap W_2} \frac{|\overline{v}_{n - m}|}{\nu^{|m|}} \|h\|_{1,\nu}.\label{gabriel_stopping_point}
\end{align}
Since $v_{k} = 0$ whenever $k \in \mathbb{Z}^2 \setminus I^{2N}$ and we only need $n \in I^N$, we obtain the overestimate that $W_1(n) \subset I^{3N}$. Additionally, since $(\pi_N h)_n = 0$ for all $n \in I^N$, we get
\begin{align}
    |(\overline{v} * \pi_N h)_n| \leq \max_{m \in I^{3N} \setminus I^N} \frac{|\overline{v}_{n-m}|}{\nu^{|m|}} \leq \frac{1}{\nu^{N+1}} \max_{m \in I^{3N} \setminus I^N} |\overline{v}_{n-m}| \leq \frac{1}{\nu^{N+1}} \max_{k \in I^N, m \in I^{3N} \setminus I^N} |\overline{v}_{k-m}|.\label{maximum_step}
\end{align}
Now, since $I^N$ and $I^{3N} \setminus I^N$ are disjoint, the index of $(0,0)$ is never attained when computing the maximum in \eqref{maximum_step}. As a result, we can write
\begin{align}
    \max_{k \in I^N, m \in I^{3N} \setminus I^N} |\overline{v}_{k-m}| = \max_{k \in I^N, m \in I^{3N} \setminus I^N} |\overline{V}_{k-m}| = \|\overline{V}\|_{\infty}.
\end{align}
Hence, we indeed see that $|(\overline{v} * \pi_N h)_n| \leq \phi_n$ for all $n \in I^N$. This means we can say
\begin{align}
    \|A^N \pi^Nz\|_{1,\nu} = \|A^N(\overline{v}\pi_N h)\|_{1,\nu} \leq \|A^N \phi\|_{1,\nu}.\label{finite part Z1}
\end{align}
Finally, we examine the second term of \eqref{split Z1}. 
\begin{align}
    \|\pi_N A z\|_{1,\nu} \leq \|\pi_N A \|_{\mathcal{B}(\ell^1_{j,\nu})} \|\pi_n z\|_{1,\nu} \leq \frac{1}{L_N} \|z\|_{1,\nu} \leq \frac{1}{L_N} \|\overline{v}\|_{1,\nu} \|h\|_{1,\nu} \leq \frac{1}{L_N} \|\overline{v}\|_{1,\nu}\label{infinite part Z1}
\end{align}
where we used the Banach algebra property and the fact that $\|h\|_{1,\nu} \leq 1$. Therefore, we can combine \eqref{finite part Z1} and \eqref{infinite part Z1} to get
\begin{align}
    \|A(Df(\overline{u}) - A^\dagger)\|_{\mathcal{B}(\ell^1_{j,\nu})} \leq \|A^N \phi\|_{1,\nu} + \frac{1}{L_N} \|\overline{v}\|_{1,\nu}  \bydef Z_1
\end{align}
as desired.
\end{proof}
\begin{remark}
The steps to estimate \eqref{finite part Z1} were similar to those used by the authors of \cite{gabriel_pfc}. Indeed, if we had stopped at \eqref{gabriel_stopping_point}, we would have had the equivalent result for our problem. We decided to go further and introduce $\overline{V}$ so we could obtain a uniform estimate for all $n$. This will be of interest in Section \ref{sec : continuation} where the uniform estimate is less computationally intensive for the estimates done on the computer. If we had not introduced $\overline{V}$, we could have obtained an estimate similar to that done in \cite{gabriel_pfc} which is a sharper version of $Z_1$. Another sharper way to compute $Z_1$ is demonstrated in \cite{miguel_soliton}. In this framework, the authors do not use $A^{\dagger}$ and directly estimate $\|I_d - A Df(\overline{u})\|$. By using non-square matrices, one can obtain a sharper estimate for $Z_1$. For our purposes, we chose not to use this approach due to the fact that it requires a larger rectangular matrix. As we are primarily interested in efficient computations for proving branches of periodic solutions later in this paper, we would prefer not to do computations with this larger matrix. Another possibility is the approach used by the authors of \cite{sh_cadiot}, which uses the Hilbert space properties of $\ell^2$ to remove the need for rectangular matrices. The tradeoff with this approach is that it requires one to do a proof in $\ell^2$, which is not a Banach algebra. Regardless, the computation performed in Lemma \ref{lem : Z1} is the most computationally efficient as it uses $\overline{V}$ and has no matrix multiplications whatsoever. Hence, it was sufficient for our purposes. 
\end{remark}
With the bounds now computed, we can present our results. 
\subsection{Results}\label{sec : results solutions}
In this section, we prove the existence of various periodic solutions with $D_3$-symmetry and $D_6$-symmetry. We construct our solution $\overline{u} \in \ell^1_{j\nu}$ on the parallelogram $\parallelogram_0$. By using Theorems \ref{th : triangle periodic} and \ref{th : hexagon periodic}, we obtain that the corresponding function's periodic tiling can be generated by $\Delta_1$ and $\Delta_2$ in the case of $D_3$, and $\varhexagon_0$ for the case of $D_6$. We present these results along with a demonstration of the periodicity. For all proofs in this section, we fix the following:
\begin{align}
    N = 80, ~ \nu = 1.09, ~ \text{and} ~ r_0 \bydef 3 \times 10^{-8}.
\end{align}
These values are used for each of the proofs.
\begin{theorem}[\bf The First Triangular Solution]\label{th : 1_1}
Let $\mu = 0.01, \gamma = 1.6, d = 10$. Then there exists a unique solution $\tilde{u}$ to \eqref{def : L and G sh 2d} in $\overline{B_{r_0}(\overline{u})} \subset \ell^1_{3,1.09}$ and we have that $\|\tilde{u}-\overline{u}\|_{1,1.09} \leq r_0$. Moreover, the periodic pattern $\tilde{u}$ can be generated by a tiling of $\Delta_1$ and $\Delta_2$.
\end{theorem}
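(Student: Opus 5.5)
The plan is a direct application of Theorem \ref{th : radii polynomial theorem} with $j=3$, $\nu = 1.09$, $N=80$. First, I would construct the approximate solution $\overline{u} \in \ell^1_{3,1.09}$ as in Section \ref{sec : numerical aspects}. This starts from a random initial guess, symmetrized via \eqref{u_0bar} using the $D_3$ sequence structure of \cite{dominic_dihedral_julia}. Newton's method is then run on $f$ with $\mu = 0.01$, $\gamma=1.6$, $d=10$, keeping only indices in $I^N$. Because $\overline{u}$ is indexed only on $\mathcal{Z}_{\mathrm{red}}(D_3)$, Lemma \ref{lem : G-fourier series} and Corollary \ref{corr : reduced_set} guarantee that the corresponding Fourier series is $D_3$-symmetric. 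I would then build $A^N \approx (\pi^N Df(\overline{u})\pi^N)^{-1}$ numerically, and define $A$ and $A^\dagger$ as in \eqref{def : Adagger}. Injectivity of $A$ follows once $Z_0<1$, since then $A^N$ is invertible, and the tail of $A$ is a diagonal operator with nonzero entries. The entries are nonzero because $(1+|\mathcal{L}\tilde n|^2)^2+\mu \geq 1+\mu > 0$ for $\mu = 0.01$, so also $L_N>0$.

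Second, I would evaluate the bounds of Lemmas \ref{lem : Y0 and Z0}, \ref{lem : Z2} and \ref{lem : Z1} rigorously with IntervalArithmetic.jl. The quantities needed are $Y_0$ (from $A^N f(\overline{u})$ and the tail $(\pi^{3N}-\pi^N)G(\overline{u})$), $Z_0$ (a finite matrix norm in $\mathcal{B}(\ell^1_{3,\nu})$, computed as a weighted maximum column sum with the orbit weights $\alpha_n$), $Z_1$ (from $\phi$, $\overline{V}$ and $L_N$), and $Z_2(r)=\max(\|A^N\|,1/L_N)(\|q\|_{1,\nu}+3r)$. The minimum $L_N$ over $n \notin I^N$ is explicit: it is attained at the smallest $|\mathcal{L}\tilde n|$ outside $I^N$, which can be bounded below by hand using $|\mathcal{L}\tilde n|^2 = (\pi/d)^2(n_1^2 - n_1n_2 + n_2^2)$. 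I would then check the two inequalities in \eqref{condition radii polynomial} at $r = r_0 = 3\times 10^{-8}$. This yields a unique zero $\tilde u \in \overline{B_{r_0}(\overline{u})}\subset \ell^1_{3,1.09}$. Since $\nu>1$, $\tilde u$ has a smooth, real-analytic Fourier series, and it is a genuine solution of \eqref{eq : swift_hohenberg}.

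Third, since $\tilde u$ lies in $\ell^1_{3,\nu}$, which consists only of $D_3$-symmetric sequences, the function $\tilde u$ is $D_3$-symmetric and periodic on $\parallelogram_0$. Theorem \ref{th : triangle periodic} then gives that its periodic pattern is generated by translations of $\Delta_1$ and $\Delta_2$, each carrying $D_3$-symmetry about its centroid.

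The main obstacle I expect is making $Y_0$ and $Z_1$ small enough for a radius as tiny as $3\times 10^{-8}$. The term $\|A^N\phi\|_{1,\nu}$ uses the crude uniform bound $\|\overline V\|_\infty/\nu^{N+1}$, and $\nu^{81}\approx 1.09^{81}$ is only about $10^3$. So this requires $\overline{u}$ to have decayed well before the truncation order $N/2$ effectively used in the $D_3$ sequence structure. If the check fails, I would first increase $N$ or tune $\nu$. Failing that, I would replace the uniform $\phi$ with the sharper componentwise bound \eqref{gabriel_stopping_point}, and also refine the Newton iterate in higher precision to shrink $Y_0$.
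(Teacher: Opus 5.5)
Your proposal is correct and follows the paper's proof. The paper builds $\overline{u}$ and $A^N$ in the $D_3$ sequence structure and evaluates the bounds of Lemmas \ref{lem : Y0 and Z0}, \ref{lem : Z2} and \ref{lem : Z1} rigorously, obtaining $Y_0 = 1.92\times 10^{-8}$, $Z_0 = 1.541\times 10^{-9}$, $Z_1 = 0.2315$ and $Z_2(r_0) = 10830.18$; these satisfy \eqref{condition radii polynomial} at $r_0 = 3\times 10^{-8}$, and Theorem \ref{th : triangle periodic} then gives the tiling. One small aside: the Fourier symbol of $(I_d+\nabla^2)^2+\mu$ is really $(1-|\mathcal{L}\tilde n|^2)^2+\mu$ (the paper's plus sign appears to be a sign slip), so the global lower bound is $\mu$ rather than $1+\mu$, but the tail entries are still far from zero and your injectivity argument for $A$ is unaffected.
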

\begin{proof}
We perform the full construction described in Section \ref{sec : numerical aspects} to build $\overline{u}$ and $A^N$. Using \cite{julia_blanco_D3D6,dominic_dihedral_julia}, we obtain
\begin{align}
    \|A^N\|_{\mathcal{B}(\ell^1_{3,1.09})} \leq 92.86 \text{,}~Y_0 \bydef 1.92 \times 10^{-8} \text{,}~Z_{2}(r_0) \bydef 10830.18   \text{,}~Z_1 \bydef 0.2315,~Z_0 \bydef 1.541 \times 10^{-9}.
    \end{align}
We prove that these values satisfy Theorem \ref{th : radii polynomial theorem}. 
\end{proof}
\begin{figure}[H]
\centering
 \begin{minipage}{.4\linewidth}
  \centering\epsfig{figure=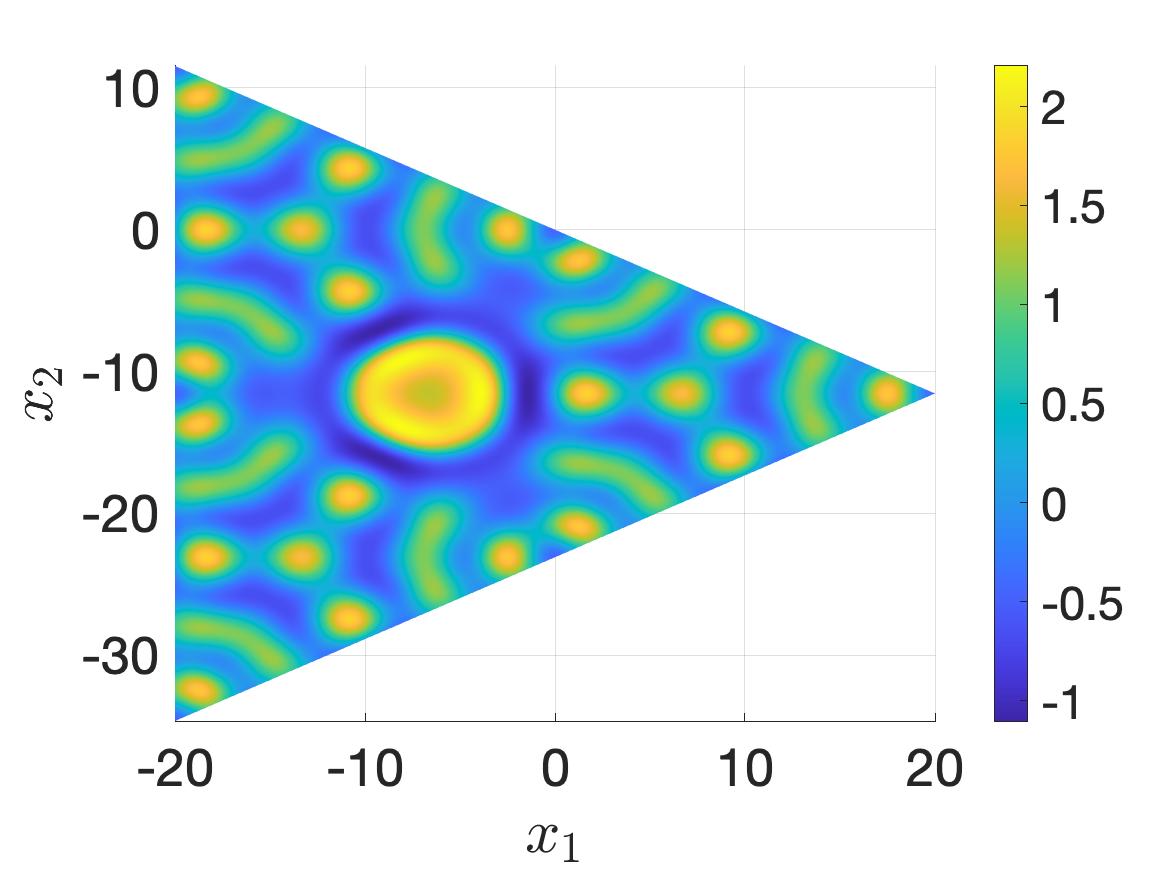,width=\linewidth}
 \end{minipage} %
 \begin{minipage}{.4\linewidth}
  \centering\epsfig{figure=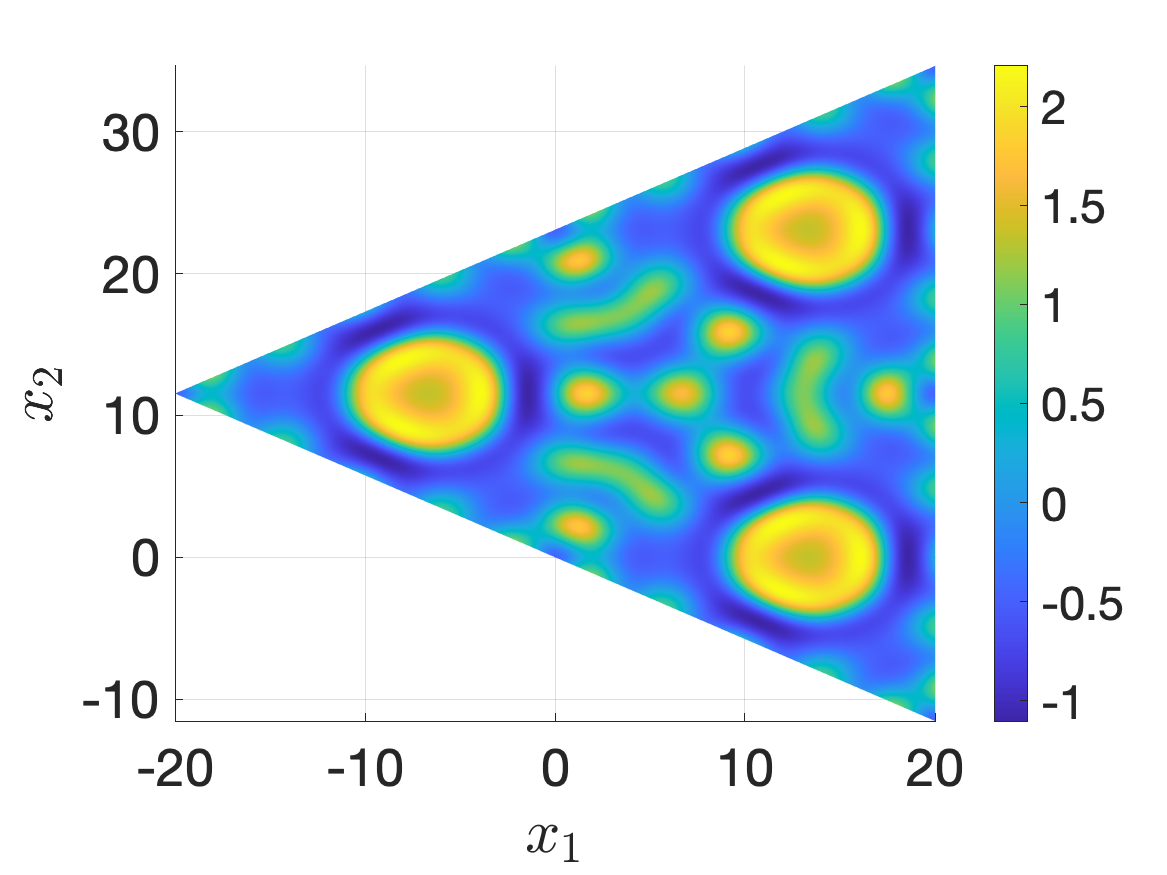,width=\linewidth}
 \end{minipage} 
 \caption{Plot of $\overline{u}$ of an approximation of a $D_3$ Pattern on $\Delta_1$ (left) and $\Delta_2$ (right)used in  Theorem \ref{th : 1_1}}\label{fig : th1_1}
 \end{figure}%
\begin{theorem}[\bf The Second Triangular Solution]\label{th : 1_2}
Let $\mu = 0.01, \gamma = 1.6, d = 5$. Then there exists a unique solution $\tilde{u}$ to \eqref{def : L and G sh 2d} in $\overline{B_{r_0}(\overline{u})} \subset \ell^1_{3,1.09}$ and we have that $\|\tilde{u}-\overline{u}\|_{1,1.09} \leq r_0$. Moreover, the periodic pattern $\tilde{u}$ can be generated by a tiling of $\Delta_1$ and $\Delta_2$.
\end{theorem}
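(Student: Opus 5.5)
The proof will follow the template of Theorem \ref{th : 1_1}, with the domain size now set to $d = 5$. That is, we apply Theorem \ref{th : radii polynomial theorem} with the bounds of Lemmas \ref{lem : Y0 and Z0}, \ref{lem : Z2} and \ref{lem : Z1}, evaluated rigorously with interval arithmetic, and then invoke Theorem \ref{th : triangle periodic} for the tiling statement.

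\textbf{Step 1: approximate solution.} With $\mu = 0.01$, $\gamma = 1.6$, $d = 5$ and $N = 80$, I will use the randomized procedure of Section \ref{sec : numerical aspects}. A random grid is generated, projected onto the $D_3$-reduced indices $I^N$ as in \eqref{u_0bar}, and Newton's method is run on the $D_3$ sequence structure until it converges to a nontrivial $\overline{u}$. Since every object lives in the $D_3$ sequence structure of \cite{dominic_dihedral_julia}, $\overline{u}$ is $D_3$-symmetric by construction. Changing $d$ only alters the frequencies $\mathcal{L}\tilde{n}$, so the convergence basin differs from the $d=10$ case and several random restarts may be needed. I will then compute $\pi^N Df(\overline{u})\pi^N$, invert it numerically to obtain $A^N$, and compute $L_N$ from \eqref{def : L_N}. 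For $\mu > 0$ we have $(1+|\mathcal{L}\tilde n|^2)^2+\mu>0$, and $L_N$ is attained on the boundary of $I^N$.

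\textbf{Step 2: bounds.} Using interval arithmetic, I will enclose the following quantities:
\begin{itemize}
\item $Y_0$, which requires $G(\overline{u})$ supported in $I^{3N}$;
\item $Z_0 = \|\pi^N - A^N\pi^N Df(\overline{u})\pi^N\|$, computed as a weighted column-sum norm with the orbit weights $\alpha_n$;
\item $Z_1 = \|A^N\phi\|_{1,\nu} + L_N^{-1}\|\overline{v}\|_{1,\nu}$;
\item $Z_2(r_0) = \max(\|A^N\|, L_N^{-1})(\|q\|_{1,\nu} + 3r_0)$.
\end{itemize}
Finally, I will check that $r_0 = 3\times 10^{-8}$ satisfies both inequalities in \eqref{condition radii polynomial}.

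\textbf{Main obstacle.} I expect the hardest part to be $Z_1$, specifically its finite part $\|A^N\phi\|_{1,\nu}$. The factor $\|\overline V\|_\infty/\nu^{N+1}$ is small only if the coefficients of $\overline{v}$ decay well inside $I^{2N}$. With $d = 5$ the Fourier modes are coarser, since the modes near the critical ring $|\mathcal{L}\tilde n| \approx 1$ occur at smaller $|n|$, so the decay should actually be better than for $d = 10$. The real risk is a large $\|A^N\|$ if $Df(\overline{u})$ is nearly singular. This can happen because of the translational or rotational near-kernel, but the $D_3$ symmetry removes translations, so I expect invertibility to hold. If $Z_1 + Z_0 + Z_2(r_0)r_0 \geq 1$, I would first try a different Newton-converged $\overline{u}$, then adjust $\nu$ slightly.

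\textbf{Step 3: conclusion.} Once the inequalities hold, Theorem \ref{th : radii polynomial theorem} gives a unique zero $\tilde u \in \overline{B_{r_0}(\overline{u})} \subset \ell^1_{3,1.09}$. Since $\tilde u$ lies in the $D_3$-symmetric space, it is a $D_3$-symmetric periodic function on $\parallelogram_0$. Theorem \ref{th : triangle periodic} then shows that its periodic pattern is generated by translates of $\Delta_1$ and $\Delta_2$.
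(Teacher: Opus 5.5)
Your proposal is essentially the paper's proof: build $\overline{u}$ and $A^N$ as in Section~\ref{sec : numerical aspects}, enclose $Y_0, Z_0, Z_1, Z_2(r_0)$ through Lemmas~\ref{lem : Y0 and Z0}, \ref{lem : Z2} and \ref{lem : Z1} with interval arithmetic, check \eqref{condition radii polynomial} at $r_0 = 3\times 10^{-8}$, and obtain the tiling from Theorem~\ref{th : triangle periodic}; the paper reports $\|A^N\|\leq 33.237$, $Y_0 = 3.455\times 10^{-11}$, $Z_0 = 4.366\times 10^{-11}$, $Z_1 = 0.0194$, $Z_2(r_0) = 383.11$, which satisfy both conditions. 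One heuristic in your ``main obstacle'' paragraph is off, though it does not affect the argument: $\|\overline{V}\|_{\infty}$ is the largest non-constant coefficient of $\overline{v}$, typically a low-frequency one, so the smallness of $\phi$ in Lemma~\ref{lem : Z1} comes entirely from the factor $\nu^{-(N+1)}$ and not from decay of $\overline{v}$ inside $I^{2N}$.
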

\begin{proof}
We perform the full construction described in Section \ref{sec : numerical aspects} to build $\overline{u}$ and $A^N$. Using \cite{julia_blanco_D3D6,dominic_dihedral_julia}, we obtain
\begin{align}
    \|A^N\|_{\mathcal{B}(\ell^1_{3,1.09})} \leq 33.237\text{,}~Y_0 \bydef 3.455 \times 10^{-11} \text{,}~Z_{2}(r_0) \bydef 383.11   \text{,}~Z_1 \bydef 0.0194,~Z_0 \bydef 4.366 \times 10^{-11}.
    \end{align}
We prove that these values satisfy Theorem \ref{th : radii polynomial theorem}. 
\end{proof}
\begin{figure}[H]
\centering
 \begin{minipage}{.4\linewidth}
  \centering\epsfig{figure=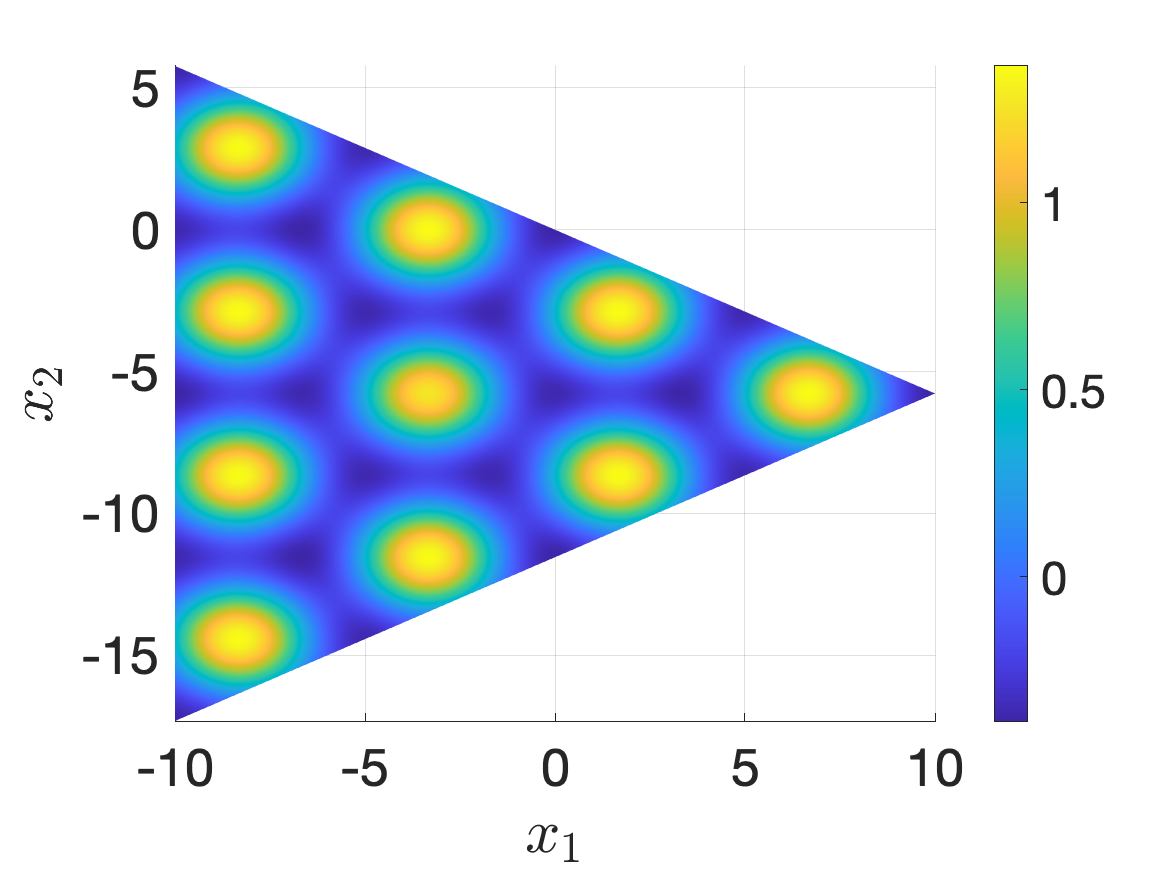,width=\linewidth}
 \end{minipage} %
 \begin{minipage}{.4\linewidth}
  \centering\epsfig{figure=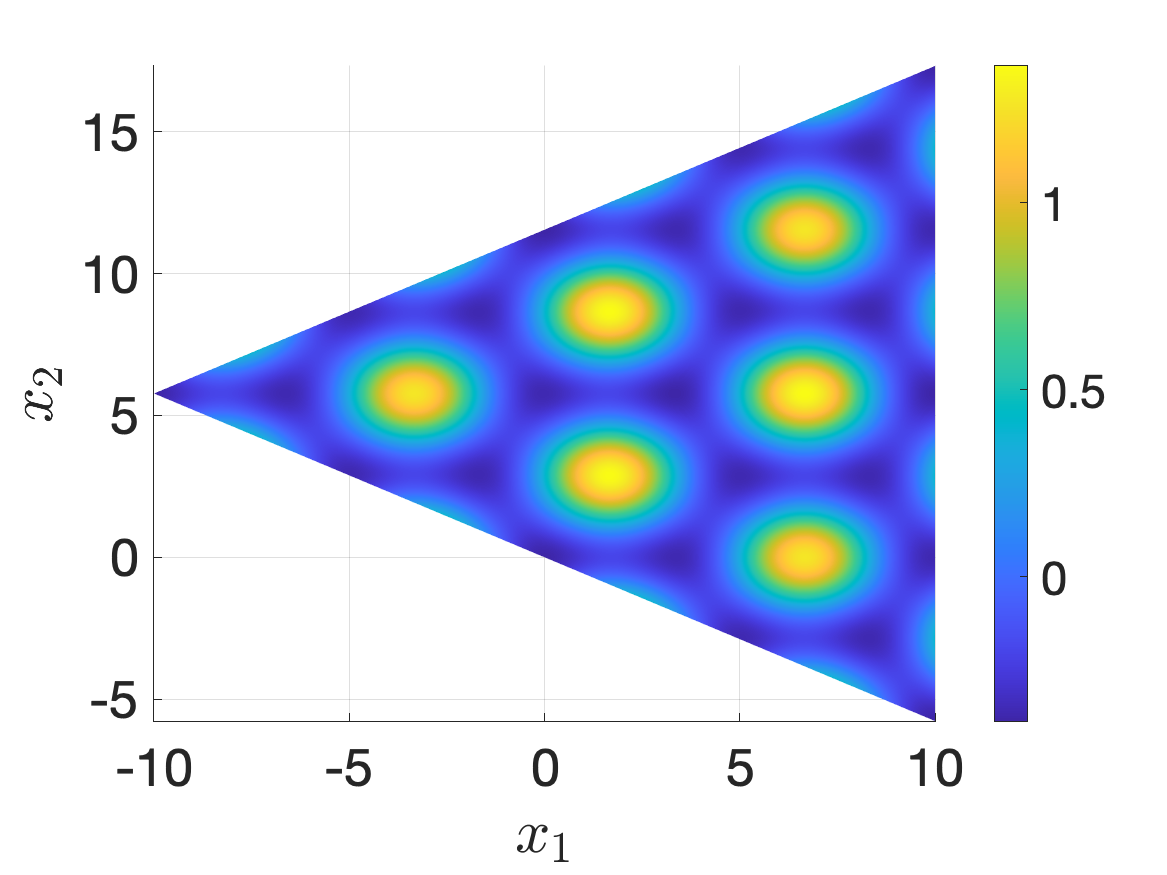,width=\linewidth}
 \end{minipage} 
 \caption{Plot of $\overline{u}$ of an approximation of a $D_3$ Pattern on $\Delta_1$ (left) and $\Delta_2$ (right) used in  Theorem \ref{th : 1_2}.}\label{fig : th1_2}
 \end{figure}%
\begin{theorem}[\bf The Third Triangular Solution]\label{th : 1_3}
Let $\mu = -0.01, \gamma = 1.7, d = 5$. Then there exists a unique solution $\tilde{u}$ to \eqref{def : L and G sh 2d} in $\overline{B_{r_0}(\overline{u})} \subset \ell^1_{3,1.09}$ and we have that $\|\tilde{u}-\overline{u}\|_{1,1.09} \leq r_0$. Moreover, the periodic pattern $\tilde{u}$ can be generated by a tiling of $\Delta_1$ and $\Delta_2$.
\end{theorem}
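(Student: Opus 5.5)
The plan mirrors the proofs of Theorems \ref{th : 1_1} and \ref{th : 1_2}, with the parameters changed to $\mu = -0.01$, $\gamma = 1.7$, $d = 5$, and with $N = 80$, $\nu = 1.09$, $r_0 = 3\times 10^{-8}$ kept fixed.

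\textbf{Step 1: numerical objects.} I will first build the approximate solution $\overline{u} \in \ell^1_{3,1.09}$ by the procedure of Section \ref{sec : numerical aspects}.
\begin{itemize}
\item Generate a random grid and project it onto a $D_3$-Fourier series of the form \eqref{u_0bar}, using the $D_3$ sequence structure of \cite{dominic_dihedral_julia}.
\item Run Newton's method until it converges to a nontrivial candidate. Because the iterates stay in $\ell^1_{3,\nu}$ throughout, the $D_3$-symmetry is enforced automatically.
\item Compute $\pi^N Df(\overline{u})\pi^N$ and numerically invert it to obtain $A^N$.
\end{itemize}

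\textbf{Step 2: the linear tail.} Since $\mu<0$, the tail of $A$ must be checked to be well defined. I will verify that $(1+|\mathcal{L}\tilde n|^2)^2+\mu$ is bounded away from $0$ for $n\notin I^N$. For $|n|$ large, $|\mathcal{L}\tilde n|$ is large, so this quantity is dominated by $|\mathcal{L}\tilde n|^4 \geq 1 \gg 0.01$. Hence $L_N$ in \eqref{def : L_N} is positive, and it will be enclosed rigorously. Injectivity of $A$ then follows from two facts: the tail is a diagonal operator with nonzero entries, and $A^N$ is injective, which is implied by $Z_0<1$ (so $A^NA^{\dagger}$ is invertible on the finite part).

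\textbf{Step 3: the bounds.} Using interval arithmetic \cite{julia_interval} together with \cite{julia_olivier,dominic_dihedral_julia,julia_blanco_D3D6}, I will evaluate rigorous enclosures of the following quantities:
\begin{itemize}
\item $Y_0$ and $Z_0$ from Lemma \ref{lem : Y0 and Z0}; this requires $G(\overline{u})$ up to $I^{3N}$, which is exact since $\overline{u}$ is a trigonometric polynomial;
\item $Z_1$ from Lemma \ref{lem : Z1}, via $\overline{v}$, $\|\overline{V}\|_\infty$ and $\|A^N\phi\|_{1,\nu}$;
\item $Z_2(r_0)$ from Lemma \ref{lem : Z2}, via $\|A^N\|_{\mathcal{B}(\ell^1_{3,\nu})}$ (a weighted column-sum maximum) and $\|q\|_{1,\nu}$.
\end{itemize}

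\textbf{Step 4: conclusion.} I will check the inequalities \eqref{condition radii polynomial} at $r=r_0$. Theorem \ref{th : radii polynomial theorem} then gives a unique zero $\tilde u$ of $f$ in $\overline{B_{r_0}(\overline{u})}\subset \ell^1_{3,1.09}$. Since $\tilde u$ is an element of $\ell^1_{3,\nu}$, it is a $D_3$-Fourier series. By Lemma \ref{lem : G-fourier series} the corresponding function is $D_3$-symmetric and periodic on $\parallelogram_0$. Theorem \ref{th : triangle periodic} then yields the tiling by $\Delta_1$ and $\Delta_2$.

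\textbf{Main obstacle.} I expect the hardest part to be making $Z_1$ small enough. With $\mu<0$ the linear part is less coercive at low modes, so some modes with $|\mathcal{L}\tilde n|\approx 1$ may make $Df(\overline{u})$ nearly singular. This inflates $\|A^N\|$, and hence both $Z_1$ and $Z_2$. If the check fails, I would first try increasing $N$ or decreasing $\nu$ toward $1$. If it still fails, I would refine $\overline{u}$ in higher precision so that $Y_0$ shrinks, allowing a smaller $r_0$.
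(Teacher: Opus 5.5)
Your proposal matches the paper's proof: build $\overline{u}$ and $A^N$ as in Section~\ref{sec : numerical aspects}, rigorously enclose $Y_0,Z_0,Z_1,Z_2(r_0)$ via Lemmas~\ref{lem : Y0 and Z0}, \ref{lem : Z2} and \ref{lem : Z1}, check \eqref{condition radii polynomial} at $r_0=3\times 10^{-8}$, and obtain the tiling from Theorem~\ref{th : triangle periodic}. The paper reports $\|A^N\|\le 6.981$, $Y_0=3.84\times10^{-11}$, $Z_0=2.1\times10^{-11}$, $Z_1=0.03$ and $Z_2(r_0)=133.286$, so the $Z_1$ obstacle you anticipate does not arise; note also that your fallbacks of lowering $\nu$ or shrinking $r_0$ would change the space $\ell^1_{3,1.09}$ or the uniqueness ball named in the statement, so they would not prove this exact theorem.
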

\begin{proof}
We perform the full construction described in Section \ref{sec : numerical aspects} to build $\overline{u}$ and $A^N$. Using \cite{julia_blanco_D3D6,dominic_dihedral_julia}, we obtain
\begin{align}
    \|A^N\|_{\mathcal{B}(\ell^1_{3,1.09})} \leq 6.981 \text{,}~Y_0 \bydef 3.84 \times 10^{-11} \text{,}~Z_{2}(r_0) \bydef 133.286   \text{,}~Z_1 \bydef 0.03,~Z_0 \bydef 2.1 \times 10^{-11}.
    \end{align}
We prove that these values satisfy Theorem \ref{th : radii polynomial theorem}. 
\end{proof}
\begin{figure}[H]
\centering
 \begin{minipage}{.4\linewidth}
  \centering\epsfig{figure=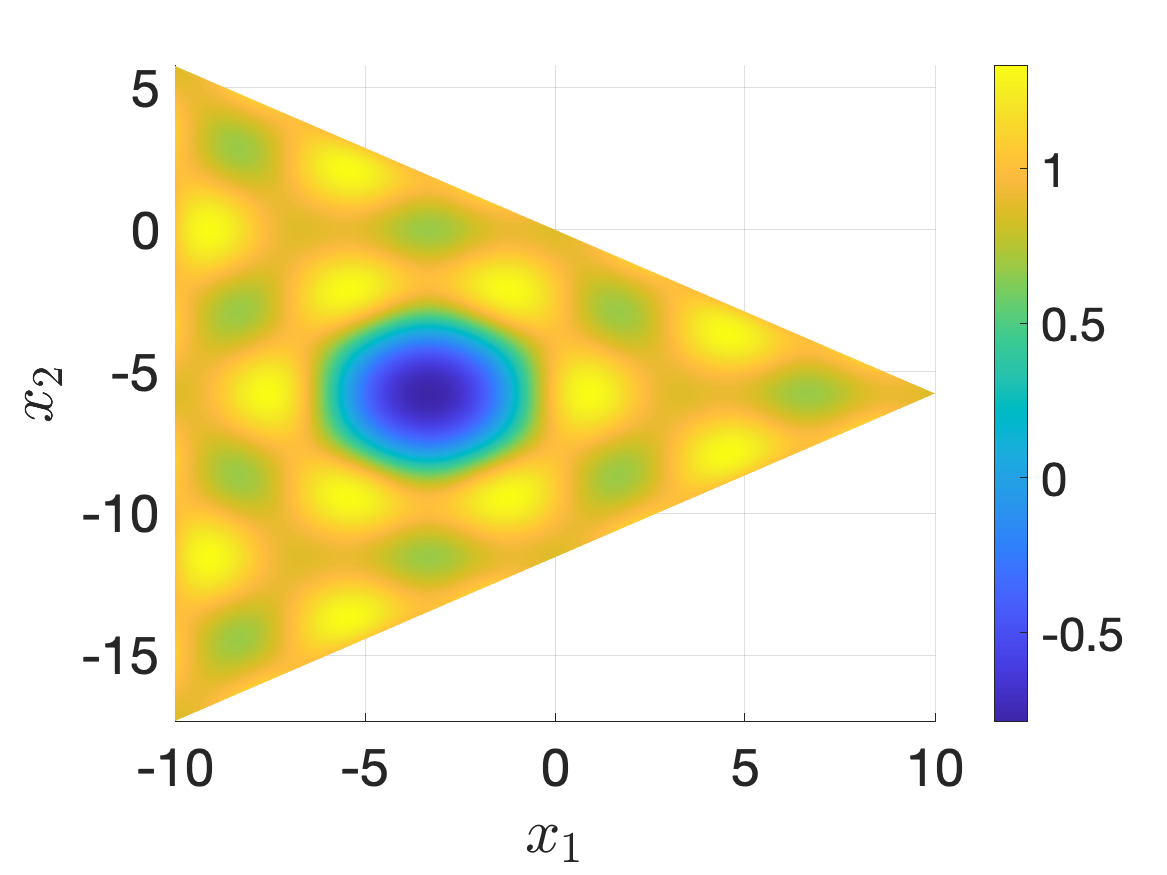,width=\linewidth}
 \end{minipage} %
 \begin{minipage}{.4\linewidth}
  \centering\epsfig{figure=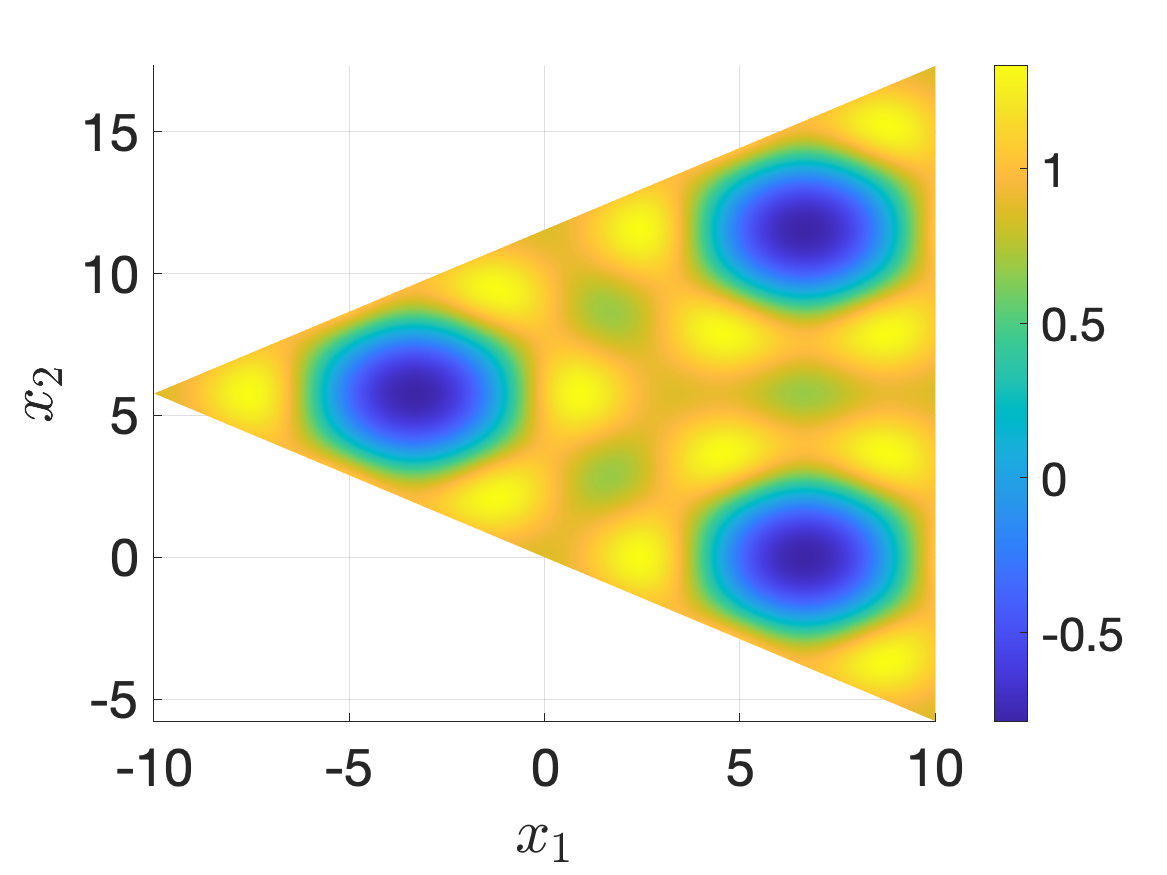,width=\linewidth}
 \end{minipage} 
 \caption{Plot of $\overline{u}$ of an approximation of a $D_3$ Pattern on $\Delta_1$ (left) and $\Delta_2$ (right) used in  Theorem \ref{th : 1_3}.}\label{fig : th1_3}
 \end{figure}%
\begin{theorem}[\bf The Fourth Triangular Solution]\label{th : 1_4}
Let $\mu = -0.2, \gamma = 2, d = 5$. Then there exists a unique solution $\tilde{u}$ to \eqref{def : L and G sh 2d} in $\overline{B_{r_0}(\overline{u})} \subset \ell^1_{3,1.09}$ and we have that $\|\tilde{u}-\overline{u}\|_{1,1.09} \leq r_0$. Moreover, the periodic pattern $\tilde{u}$ can be generated by a tiling of $\Delta_1$ and $\Delta_2$.
\end{theorem}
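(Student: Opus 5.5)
The plan is the same computer-assisted argument used for Theorems \ref{th : 1_1}--\ref{th : 1_3}, now with the parameters $\mu = -0.2$, $\gamma = 2$, $d = 5$ and the fixed choices $N = 80$, $\nu = 1.09$, $r_0 = 3\times 10^{-8}$. The proof has four steps.

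\textbf{Step 1: build the approximate solution and inverse.} Following Section \ref{sec : numerical aspects}, I would draw random grids, project them onto $D_3$-Fourier series indexed by $I^N$ as in \eqref{u_0bar}, and run Newton until it converges. This gives $\overline{u} \in \ell^1_{3,1.09}$, with coefficients outside $I^N$ set to zero. Using the $D_3$ sequence structures of \cite{dominic_dihedral_julia}, I would assemble the matrix $\pi^N Df(\overline{u})\pi^N$ and take a numerical inverse as $A^N$. Since $\mu<0$, I would check with interval arithmetic that $(1+|\mathcal{L}\tilde n|^2)^2+\mu$ does not vanish on $\mathcal{Z}_{\mathrm{red}}(D_3)\setminus I^N$. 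This holds because $|\mathcal{L}\tilde n|$ is large there, which makes $L_N$ in \eqref{def : L_N} large and positive. Hence $A$ is well defined and injective: its tail is a nonzero diagonal, and $A^N$ can be verified invertible, for instance from $Z_0<1$.

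\textbf{Step 2: evaluate the bounds rigorously.} With IntervalArithmetic.jl I would compute the four bounds:
\begin{itemize}
\item $Y_0$ and $Z_0$ from Lemma \ref{lem : Y0 and Z0}, using that $G(\overline{u})$ is supported in $I^{3N}$;
\item $Z_1$ from Lemma \ref{lem : Z1}, built from $\overline{v} = -2\gamma\overline{u}+3\overline{u}*\overline{u}$, $\overline{V}$ and $\phi$;
\item $Z_2(r_0)$ from Lemma \ref{lem : Z2}, with $q = -2\gamma\delta + 6\overline{u}$ and an interval bound on $\|A^N\|_{\mathcal{B}(\ell^1_{3,1.09})}$, computed as a weighted maximum column sum.
\end{itemize}

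\textbf{Step 3: apply the radii polynomial theorem.} I would check that both inequalities in \eqref{condition radii polynomial} hold at $r = r_0$. Theorem \ref{th : radii polynomial theorem} then gives a unique zero $\tilde u \in \overline{B_{r_0}(\overline{u})}\subset \ell^1_{3,1.09}$.

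\textbf{Step 4: transfer symmetry and tiling.} Because the whole argument runs inside $\ell^1_{3,\nu}$, $\tilde u$ is a $D_3$-Fourier series in the sense of Lemma \ref{lem : G-fourier series}. The decay from $\nu>1$ gives a smooth periodic function on $\parallelogram_0$. Theorem \ref{th : triangle periodic} then shows that the pattern is generated by translates of $\Delta_1$ and $\Delta_2$.

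\textbf{Main obstacle.} I expect the difficulty to be making $Z_1$ small enough. At $\mu=-0.2$ with the larger $\gamma = 2$, the solution has larger amplitude, so $\|\overline{v}\|_{1,\nu}$ and $\|A^N\|$ grow, as the increasing values of $\|A^N\|$ across the earlier theorems suggest. The crude uniform $\|\overline V\|_\infty/\nu^{N+1}$ estimate may also lose too much. If $Z_1+Z_0$ is not comfortably below $1$, I would first try the sharper pointwise bound from \eqref{gabriel_stopping_point}, which keeps the $n$-dependence, or increase $N$ so that $1/L_N$ shrinks. A further option is to re-run Newton in higher precision to reduce $Y_0$, which lets the proof tolerate a larger $Z_2(r_0)r_0$.
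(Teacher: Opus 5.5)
Your proposal is correct and follows the paper's proof. The paper builds $\overline{u}$ and $A^N$ as in Section~\ref{sec : numerical aspects} and evaluates the bounds of Lemmas~\ref{lem : Y0 and Z0}, \ref{lem : Z2} and \ref{lem : Z1} rigorously, obtaining $\|A^N\|_{\mathcal{B}(\ell^1_{3,1.09})} \le 6.95$, $Y_0 = 6.32\times 10^{-11}$, $Z_0 = 2.93\times 10^{-11}$, $Z_1 = 0.0112$ and $Z_2(r_0) = 96.202$. It then checks both radii-polynomial inequalities of Theorem~\ref{th : radii polynomial theorem} at $r_0 = 3\times 10^{-8}$ and gets the tiling from Theorem~\ref{th : triangle periodic}.

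The obstacle you anticipate does not arise: $Z_1$ is tiny here. Also, the values of $\|A^N\|$ in the preceding triangular theorems decrease ($92.86$, $33.237$, $6.981$) rather than increase, so none of your fallback refinements are needed.
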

\begin{proof}
We perform the full construction described in Section \ref{sec : numerical aspects} to build $\overline{u}$ and $A^N$. Using \cite{julia_blanco_D3D6,dominic_dihedral_julia}, we obtain
\begin{align}
    \|A^N\|_{\mathcal{B}(\ell^1_{3,1.09})} \leq 6.95 \text{,}~Y_0 \bydef 6.32 \times 10^{-11} \text{,}~Z_{2}(r_0) \bydef 96.202   \text{,}~Z_1 \bydef 0.0112,~Z_0 \bydef 2.93 \times 10^{-11}.
    \end{align}
We prove that these values satisfy Theorem \ref{th : radii polynomial theorem}. 
\end{proof}
\begin{figure}[H]
\centering
 \begin{minipage}{.4\linewidth}
  \centering\epsfig{figure=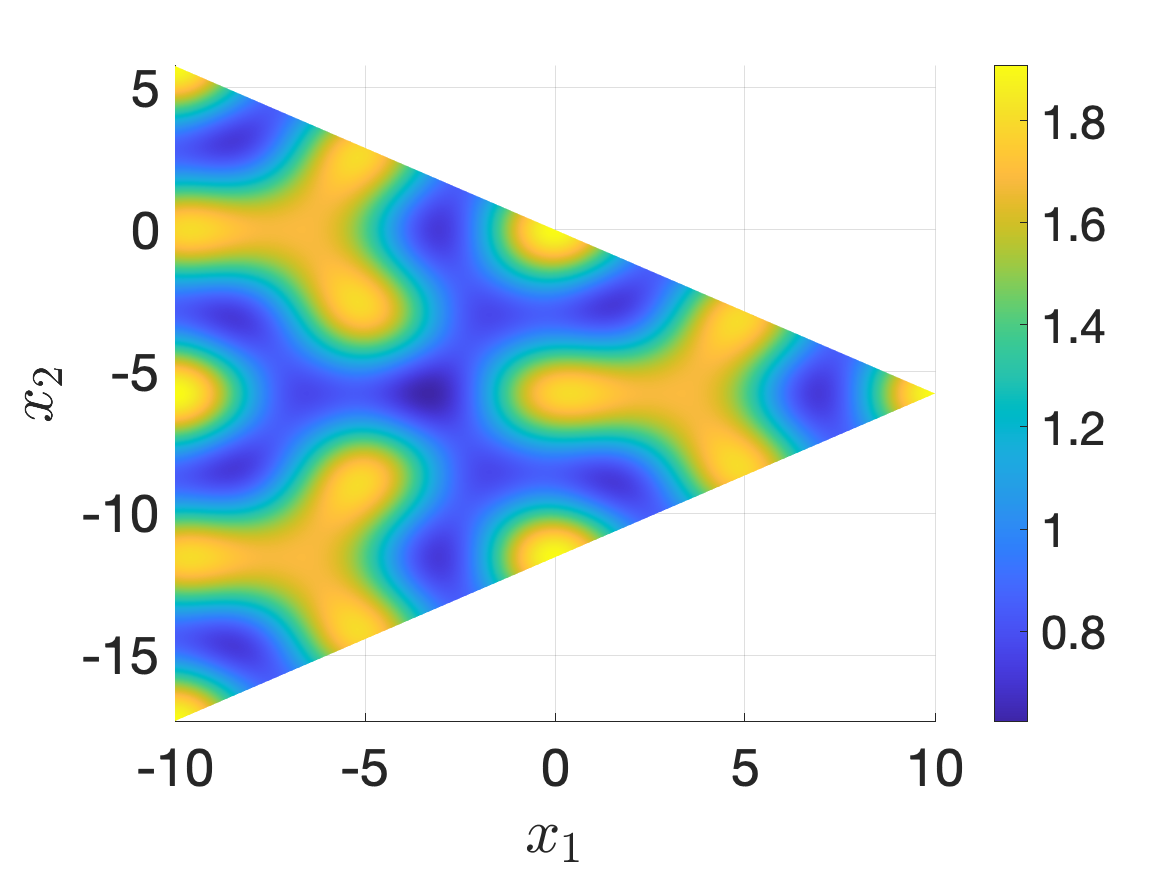,width=\linewidth}
 \end{minipage} %
 \begin{minipage}{.4\linewidth}
  \centering\epsfig{figure=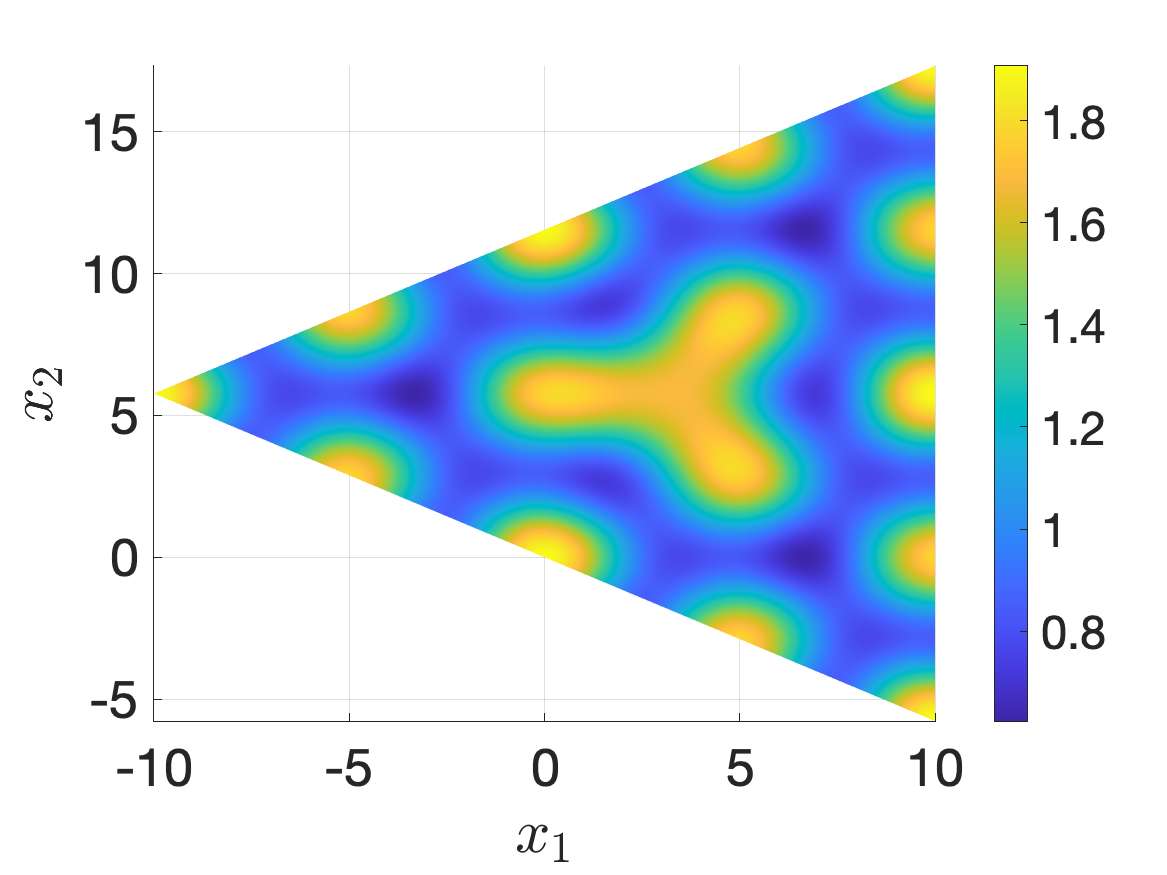,width=\linewidth}
 \end{minipage} 
 \caption{Plot of $\overline{u}$ of an approximation of a $D_3$ Pattern on $\Delta_1$ (left) and $\Delta_2$ (right) used in  Theorem \ref{th : 1_4}.}\label{fig : th1_4}
 \end{figure}%
\begin{theorem}[\bf The First Hexagonal Solution]\label{th : 2_1}
Let $\mu = -0.01, \gamma = 1.6, d = 10$. Then there exists a unique solution $\tilde{u}$ to \eqref{def : L and G sh 2d} in $\overline{B_{r_0}(\overline{u})} \subset \ell^1_{6,1.09}$ and we have that $\|\tilde{u}-\overline{u}\|_{1,1.09} \leq r_0$. Moreover, $\tilde{u}$ is periodic on $\varhexagon_0$.
\end{theorem}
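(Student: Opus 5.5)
The proof follows the same computer-assisted template as Theorems \ref{th : 1_1}--\ref{th : 1_4}, now using the $D_6$ sequence structure of \cite{dominic_dihedral_julia} on $\mathcal{Z}_{\mathrm{red}}(D_6)$.

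\textbf{Step 1: approximate solution and approximate inverse.} Fix $\mu=-0.01$, $\gamma=1.6$, $d=10$, $N=80$ and $\nu=1.09$. Following Section \ref{sec : numerical aspects}, we generate a random grid and compute its Fourier coefficients. We keep one representative per $D_6$-orbit, giving $\overline{U}_0$ as in \eqref{u_0bar}, and run Newton's method until it converges to $\overline{u}$ supported on $I^N$. Because $\overline{u}$ is built only from indices in $\mathcal{Z}_{\mathrm{red}}(D_6)$, Corollary \ref{corr : reduced_set} and Lemma \ref{lem : G-fourier series} guarantee that the corresponding function is exactly $D_6$-symmetric. We then assemble the matrix $\pi^N Df(\overline{u})\pi^N$ in the $D_6$ structure and invert it numerically to obtain $A^N$, which defines $A$ and $A^\dagger$.

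\textbf{Step 2: rigorous bounds.} Using interval arithmetic \cite{julia_interval}, we compute:
\begin{itemize}
\item $Y_0$ and $Z_0$ from Lemma \ref{lem : Y0 and Z0};
\item $Z_1$ from Lemma \ref{lem : Z1}, using $\overline{V}$ and $\phi$;
\item $Z_2(r_0)$ from Lemma \ref{lem : Z2}, using $\|A^N\|_{\mathcal{B}(\ell^1_{6,\nu})}$, the quantity $L_N$ from \eqref{def : L_N}, and $\|q\|_{1,\nu}$.
\end{itemize}
Since $\mu<0$, $L_N$ must be enclosed with care. The quantity $(1+|\mathcal{L}\tilde n|^2)^2+\mu$ can come close to zero when $|\mathcal{L}\tilde n|\approx 1$. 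With $d=10$, however, this happens only at small $|n|$, roughly $|n|\lesssim 4$, so for $n\notin I^{80}$ the minimum is large and is attained near the boundary of $I^N$. We then check rigorously that $r_0=3\times10^{-8}$ satisfies both inequalities in \eqref{condition radii polynomial}. Theorem \ref{th : radii polynomial theorem} then yields a unique zero $\tilde u\in\overline{B_{r_0}(\overline{u})}\subset\ell^1_{6,1.09}$.

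\textbf{Step 3: regularity, symmetry and periodicity.} Because the fixed-point argument takes place entirely inside $\ell^1_{6,\nu}$, the solution $\tilde u$ inherits the $D_6$ symmetry. Since $\nu>1$, its Fourier coefficients decay geometrically, so $\tilde u$ is a smooth (indeed analytic) periodic function on $\parallelogram_0$. Theorem \ref{th : hexagon periodic} then shows that $\tilde u$ is periodic on $\varhexagon_0$.

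\textbf{Main obstacle.} I expect the difficulty to be Step 1 together with the size of $Z_1$. Because $d=10$ is large and $\mu$ is close to $0$, many modes lie near the critical circle $|\mathcal{L}\tilde n|=1$. This makes $\|A^N\|$ large and inflates the crude bounds $\|A^N\phi\|$ and $Z_2$. The first remedy is to rerun Newton from different random seeds until the resulting $\overline{u}$ gives $Z_1<1$. If that fails, the next options are to increase $N$ or to tune $\nu$.
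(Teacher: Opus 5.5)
Your proposal is essentially the paper's proof: build $\overline{u}$ and $A^N$ in the $D_6$ sequence structure, enclose $Y_0,Z_0,Z_1,Z_2(r_0)$ via Lemmas \ref{lem : Y0 and Z0}, \ref{lem : Z1} and \ref{lem : Z2}, verify the radii-polynomial conditions at $r_0=3\times 10^{-8}$, and get periodicity on $\varhexagon_0$ from Theorem \ref{th : hexagon periodic}. The paper obtains $\|A^N\|\le 7.816$, $Y_0\approx 8.0\times 10^{-12}$, $Z_0\approx 1.8\times 10^{-11}$, $Z_1\approx 0.092$ and $Z_2(r_0)\approx 131.28$, so the obstacle you anticipated does not arise; one cosmetic slip is that $(1+|\mathcal{L}\tilde n|^2)^2+\mu\ge 0.99$ as written, and the near-vanishing you describe actually concerns the true symbol $(1-|\mathcal{L}\tilde n|^2)^2+\mu$ of $L$, which the paper also prints with a plus sign.
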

\begin{proof}
We perform the full construction described in Section \ref{sec : numerical aspects} to build $\overline{u}$ and $A^N$. Using \cite{julia_blanco_D3D6,dominic_dihedral_julia}, we obtain
\begin{align}
    \|A^N\|_{\mathcal{B}(\ell^1_{6,1.09})} \leq 7.816 \text{,}~Y_0 \bydef 8.003 \times 10^{-12} \text{,}~Z_{2}(r_0) \bydef 131.28   \text{,}~Z_1 \bydef 0.092,~Z_0 \bydef 1.83 \times 10^{-11}.
    \end{align}
We prove that these values satisfy Theorem \ref{th : radii polynomial theorem}. 
\end{proof}
\begin{figure}[H]
\centering
\epsfig{figure=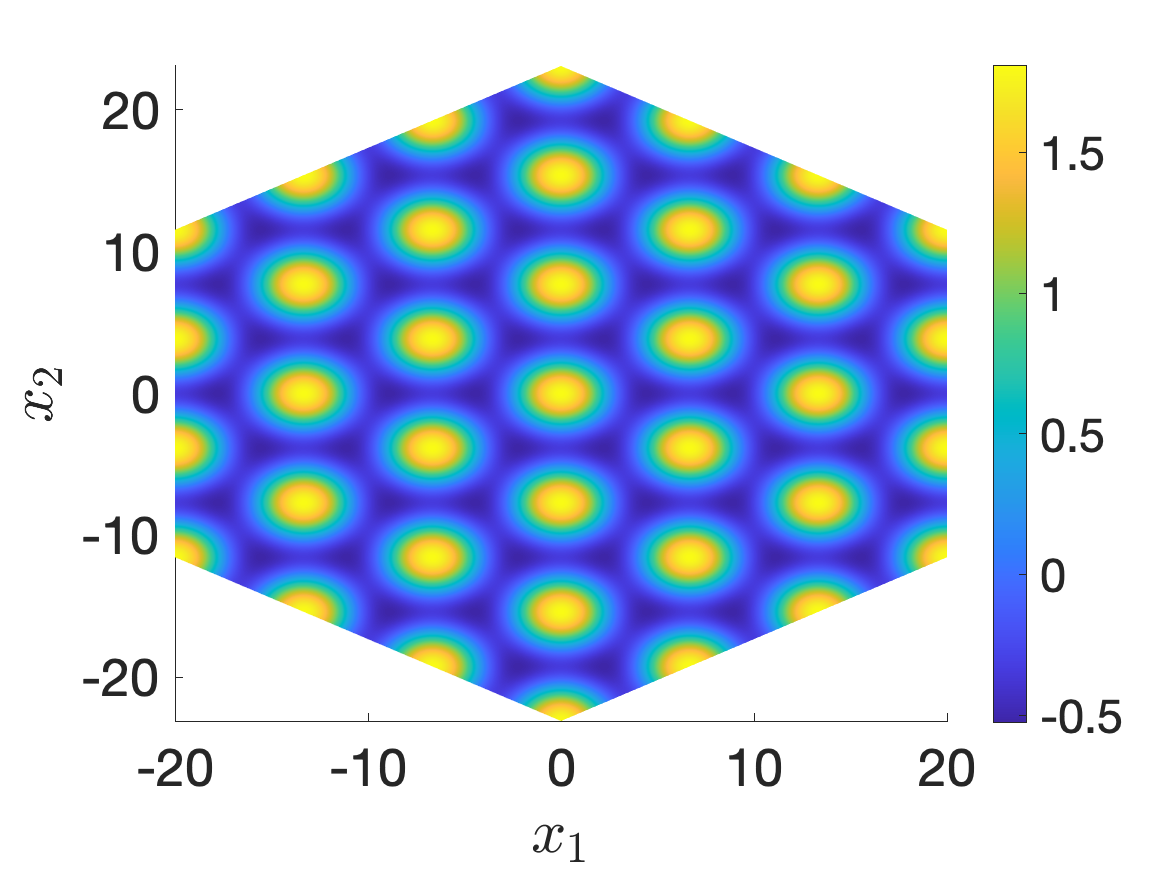,width=0.6\textwidth}
 \caption{Plot of $\overline{u}$ on $\varhexagon_0$ used in  Theorem \ref{th : 2_1}.}\label{fig : th2_1}
 \end{figure}%
\begin{theorem}[\bf The Second Hexagonal Solution]\label{th : 2_2}
Let $\mu = -0.1, \gamma = 2, d = 10$. Then there exists a unique solution $\tilde{u}$ to \eqref{def : L and G sh 2d} in $\overline{B_{r_0}(\overline{u})} \subset \ell^1_{6,1.09}$ and we have that $\|\tilde{u}-\overline{u}\|_{1,1.09} \leq r_0$. Moreover, $\tilde{u}$ is periodic on $\varhexagon_0$.
\end{theorem}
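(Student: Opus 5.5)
The proof follows the template of Theorems \ref{th : 1_1}--\ref{th : 2_1}: it is a computer-assisted verification of the hypotheses of Theorem \ref{th : radii polynomial theorem}, followed by an appeal to Theorem \ref{th : hexagon periodic} for the periodicity statement. We fix $\mu = -0.1$, $\gamma = 2$, $d = 10$, $N = 80$, $\nu = 1.09$ and work in $\ell^1_{6,1.09}$.

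\textbf{Constructing the approximate solution and inverse.} We build $\overline{u}$ with the procedure of Section \ref{sec : numerical aspects}, using the $D_6$ sequence structure of \cite{dominic_dihedral_julia}.
\begin{itemize}
\item[(i)] Start from a random grid and project it onto indices in $I^N \subset \mathcal{Z}_{\mathrm{red}}(D_6)$ as in \eqref{u_0bar}. Since $\mu<0$ promotes hexagonal patterns, a natural alternative initial guess is a hexagon pattern at wavenumber $|\mathcal{L}\tilde n|\approx 1$, or a continuation from the solution of Theorem \ref{th : 2_1}.
\item[(ii)] Run Newton until convergence and set all coefficients outside $I^N$ to zero. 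Because $\overline{u}$ is indexed on $\mathcal{Z}_{\mathrm{red}}(D_6)$, it is $D_6$-symmetric by Lemma \ref{lem : G-fourier series}.
\item[(iii)] Assemble $\pi^N Df(\overline{u})\pi^N$ as a matrix and invert it numerically to get $A^N$. Taking the tail of $A$ to be $L^{-1}$ then defines $A$. Injectivity of $A$ follows from the invertibility of $A^N$, which is checked a posteriori through $Z_0<1$, together with the fact that $L_N>0$.
\end{itemize}

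\textbf{Evaluating the bounds.} We then compute rigorously, with interval arithmetic \cite{julia_interval,julia_olivier}:
\begin{itemize}
\item[(i)] $Y_0$ and $Z_0$ as in Lemma \ref{lem : Y0 and Z0}. This uses $\pi^{3N}G(\overline{u}) = G(\overline{u})$, which holds because $G$ is cubic.
\item[(ii)] $Z_1$ from Lemma \ref{lem : Z1}, using $\overline{v} = -2\gamma\overline{u}+3\overline{u}*\overline{u}$, its truncation $\overline{V}$, and the constant vector $\phi$.
\item[(iii)] $Z_2(r_0)$ from Lemma \ref{lem : Z2}, with $q = -2\gamma\delta + 6\overline{u}$ and an interval enclosure of $\|A^N\|_{\mathcal{B}(\ell^1_{6,1.09})}$.
\end{itemize}
The operator norm on $\ell^1_{6,\nu}$ is the weighted maximum column sum, with the orbit weights $\alpha_n$ included. $L_N$ must be computed as a true minimum over the complement of $I^N$. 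With $\mu=-0.1$, the symbol $(1-|\mathcal{L}\tilde n|^2)^2+\mu$ is negative near the circle $|\mathcal{L}\tilde n|=1$. For $d=10$ and $N=80$, however, this circle lies well inside the truncation: it corresponds to indices of size roughly $d/\pi\approx 3$. Outside $I^N$ the symbol therefore grows like $|\mathcal{L}\tilde n|^4$, and the minimum is attained at the boundary of $I^N$. Care is needed here because $I^N$ in the hexagonal basis is not a square in physical frequency, so we take the minimum of $|\mathcal{L}\tilde n|$ over the boundary indices.

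\textbf{Closing the argument and the main obstacle.} We check that $r_0=3\times 10^{-8}$ satisfies both inequalities in \eqref{condition radii polynomial}. Theorem \ref{th : radii polynomial theorem} then gives a unique zero $\tilde u \in \overline{B_{r_0}(\overline{u})}\subset \ell^1_{6,1.09}$. Since $\tilde u$ lies in the symmetric space, it is $D_6$-symmetric, and Theorem \ref{th : hexagon periodic} gives periodicity on $\varhexagon_0$. We expect the main obstacle to be making $Z_1<1$ with room to spare. Negative $\mu$ and larger $\gamma=2$ give a larger-amplitude $\overline{u}$, which inflates both $\|\overline{v}\|_{1,\nu}/L_N$ and $\|A^N\phi\|_{1,\nu}$. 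If the bound fails, we would first increase $N$ or lower $\nu$ toward $1$, which shrinks $\phi$ relative to the decay of $\overline{u}$. Failing that, we would replace the uniform $\|\overline{V}\|_\infty$ estimate by the sharper $n$-dependent bound \eqref{gabriel_stopping_point}.
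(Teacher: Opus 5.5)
Your proposal is essentially the paper's proof: build $\overline{u}$ and $A^N$ with the $D_6$ sequence structure, enclose $Y_0, Z_0, Z_1, Z_2(r_0)$ via Lemmas \ref{lem : Y0 and Z0}, \ref{lem : Z2} and \ref{lem : Z1}, check the two inequalities of Theorem \ref{th : radii polynomial theorem} at $r_0 = 3\times 10^{-8}$, and read off periodicity on the hexagon from Theorem \ref{th : hexagon periodic}; the paper reports $\|A^N\|_{\mathcal{B}(\ell^1_{6,1.09})}\le 542.523$, $Y_0 = 3.182\times 10^{-11}$, $Z_0 = 9.133\times 10^{-10}$, $Z_1 = 0.5015$ and $Z_2(r_0) = 3112.49$, so your guess that $Z_1$ is the binding bound is right. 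The only slip is in your fallback plan: lowering $\nu$ toward $1$ does not shrink $\phi$ but inflates the factor $1/\nu^{N+1}$ in $\phi_n = \|\overline{V}\|_\infty/\nu^{N+1}$ (it only helps the already negligible $\frac{1}{L_N}\|\overline{v}\|_{1,\nu}$ term), so increasing $N$ or switching to the $n$-dependent bound \eqref{gabriel_stopping_point} are the remedies that actually help.
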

\begin{proof}
We perform the full construction described in Section \ref{sec : numerical aspects} to build $\overline{u}$ and $A^N$. Using \cite{julia_blanco_D3D6,dominic_dihedral_julia}, we obtain
{\small\begin{align}
    \|A^N\|_{\mathcal{B}(\ell^1_{6,1.09})} \leq 542.523 \text{,}~Y_0 \bydef 3.182 \times 10^{-11} \text{,}~Z_{2}(r_0) \bydef 3112.49   \text{,}~Z_1 \bydef 0.5015,~Z_0 \bydef 9.133 \times 10^{-10}.
    \end{align}}
We prove that these values satisfy Theorem \ref{th : radii polynomial theorem}. 
\end{proof}
\begin{figure}[H]
\centering
\epsfig{figure=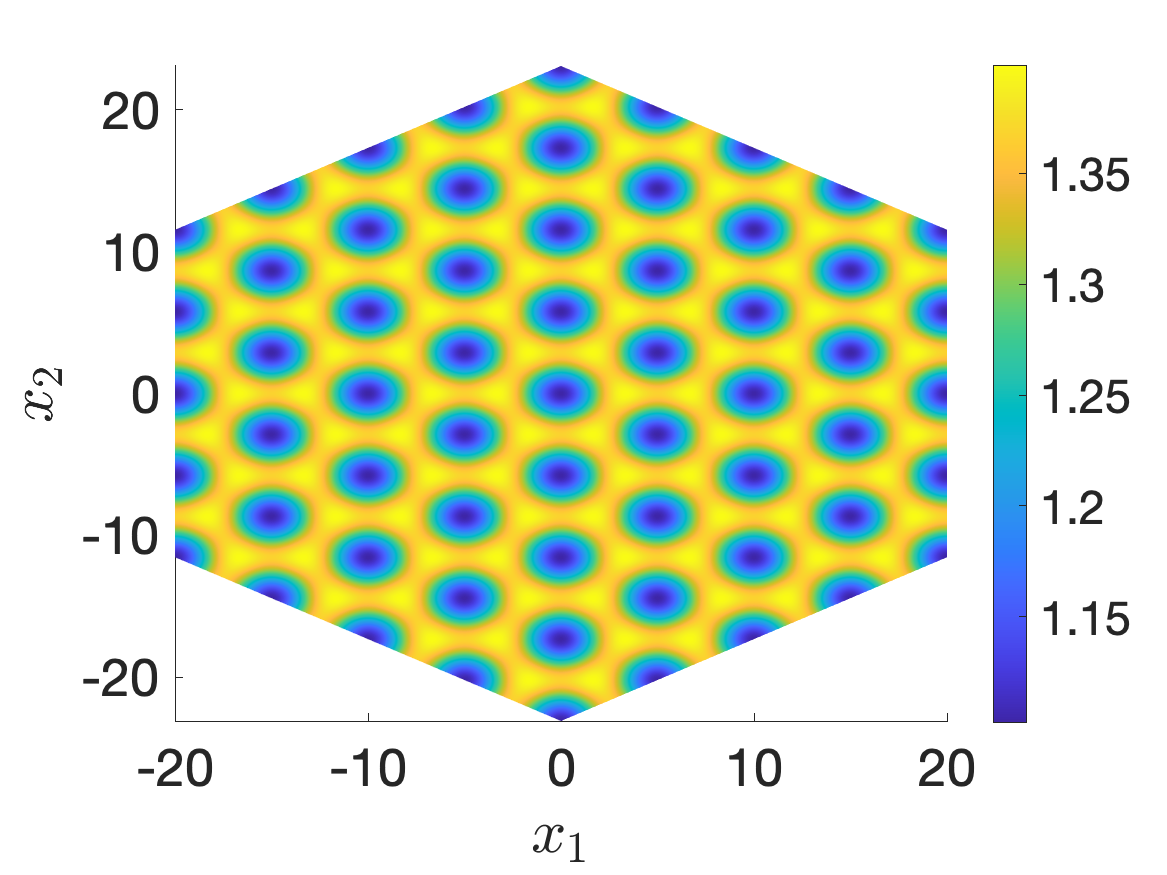,width=0.6\textwidth}
 \caption{Plot of $\overline{u}$ on $\varhexagon_0$ used in  Theorem \ref{th : 2_2}.}\label{fig : th2_2}
 \end{figure}%
\begin{theorem}[\bf The Third Hexagonal Solution]\label{th : 2_3}
Let $\mu = 0.3, \gamma = 2.1, d = 5$. Then there exists a unique solution $\tilde{u}$ to \eqref{def : L and G sh 2d} in $\overline{B_{r_0}(\overline{u})} \subset \ell^1_{6,1.09}$ and we have that $\|\tilde{u}-\overline{u}\|_{1,1.09} \leq r_0$. Moreover, $\tilde{u}$ is periodic on $\varhexagon_0$.
\end{theorem}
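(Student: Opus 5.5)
The plan is to follow the same computer-assisted template used for Theorems \ref{th : 2_1} and \ref{th : 2_2}, now with $\mu = 0.3$, $\gamma = 2.1$, $d = 5$ and the fixed values $N = 80$, $\nu = 1.09$, $r_0 = 3\times 10^{-8}$.

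\textbf{Step 1: the approximate solution.} Following Section \ref{sec : numerical aspects}, we start from a random grid and build its Fourier approximation $\overline{U}_1$. We then fold it into the $D_6$-symmetric form \eqref{u_0bar} using the $D_6$ sequence structure of \cite{dominic_dihedral_julia}. Newton's method is run on $\overline{U}_0$ until it converges, which may take several random restarts. Truncating the result to $I^N$ gives $\overline{u}\in\ell^1_{6,1.09}$. By construction, $\overline{u}$ satisfies the relations of Lemma \ref{lem : G-fourier series} exactly.

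\textbf{Step 2: the approximate inverse.} We assemble $\pi^N Df(\overline{u})\pi^N$ as a matrix on $I^N$ and invert it numerically to get $A^N$. Both $A^N$ and the coefficients of $\overline{u}$ are then converted to interval arithmetic. All remaining quantities are evaluated rigorously with \cite{julia_interval,julia_olivier}:
\begin{itemize}
\item $Y_0$ and $Z_0$ via Lemma \ref{lem : Y0 and Z0};
\item $Z_2(r_0)$ via Lemma \ref{lem : Z2}, which needs $\|A^N\|_{\mathcal{B}(\ell^1_{6,1.09})}$, $\|q\|_{1,\nu}$ and $L_N$;
\item $Z_1$ via Lemma \ref{lem : Z1}, which needs $\overline{v}$, $\overline{V}$, $\phi$ and $\|\overline{v}\|_{1,\nu}$.
\end{itemize}
Since $\mu>0$, the quantity $(1+|\mathcal{L}\tilde n|^2)^2+\mu$ is bounded below by $\mu$ and grows with $|n|$. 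Hence $L_N$ is attained near the boundary of $I^N$ and is large, so the tail terms are harmless.

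\textbf{Step 3: the contraction condition.} We check rigorously that $r=r_0$ satisfies both inequalities in \eqref{condition radii polynomial}. Theorem \ref{th : radii polynomial theorem} then gives a unique zero $\tilde u$ of $f$ in $\overline{B_{r_0}(\overline{u})}\subset\ell^1_{6,1.09}$. This ball consists only of $D_6$-symmetric sequences, so $\tilde u$ is $D_6$-symmetric. Theorem \ref{th : hexagon periodic} then yields that $\tilde u$ is periodic on $\varhexagon_0$.

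\textbf{Main obstacle.} The hard part is keeping $Z_1+Z_0+Z_2(r_0)r_0<1$, which needs a good numerical solution. The $Z_1$ bound is coarse: it uses the uniform estimate through $\|\overline{V}\|_\infty/\nu^{N+1}$, and $\|A^N\|$ could be large, as in Theorem \ref{th : 2_2}. If the check fails, the first remedies are to raise $N$ or to tune $\nu$ so that $\nu^{N+1}$ dominates $\|\overline{V}\|_\infty$. Failing that, we would switch to the sharper pointwise version at \eqref{gabriel_stopping_point}.
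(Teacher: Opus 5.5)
Your proposal is correct and is essentially the paper's proof: build $\overline{u}$ and $A^N$ as in Section \ref{sec : numerical aspects}, evaluate $Y_0, Z_0, Z_1, Z_2(r_0)$ rigorously through Lemmas \ref{lem : Y0 and Z0}, \ref{lem : Z2} and \ref{lem : Z1}, and check the inequalities of Theorem \ref{th : radii polynomial theorem} at $r_0 = 3\times 10^{-8}$. Symmetry then comes from working in $\ell^1_{6,1.09}$, and periodicity on $\varhexagon_0$ from Theorem \ref{th : hexagon periodic}. The only content you could not supply blind is the computed output, which the paper reports as $\|A^N\|_{\mathcal{B}(\ell^1_{6,1.09})}\le 44.52$, $Y_0 = 1.799\times10^{-10}$, $Z_0 = 7.962\times10^{-11}$, $Z_1 = 0.0308$ and $Z_2(r_0) = 813.455$; this leaves a wide margin, so none of your fallbacks are needed (and retuning $\nu$ would in any case change the space in the statement).
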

\begin{proof}
We perform the full construction described in Section \ref{sec : numerical aspects} to build $\overline{u}$ and $A^N$. Using \cite{julia_blanco_D3D6,dominic_dihedral_julia}, we obtain
\begin{align}
    \|A^N\|_{\mathcal{B}(\ell^1_{6,1.09})} \leq 44.52 \text{,}~Y_0 \bydef 1.799 \times 10^{-10} \text{,}~Z_{2}(r_0) \bydef 813.455   \text{,}~Z_1 \bydef 0.0308,~Z_0 \bydef 7.962 \times 10^{-11}.
    \end{align}
We prove that these values satisfy Theorem \ref{th : radii polynomial theorem}. 
\end{proof}
\begin{figure}[H]
\centering
\epsfig{figure=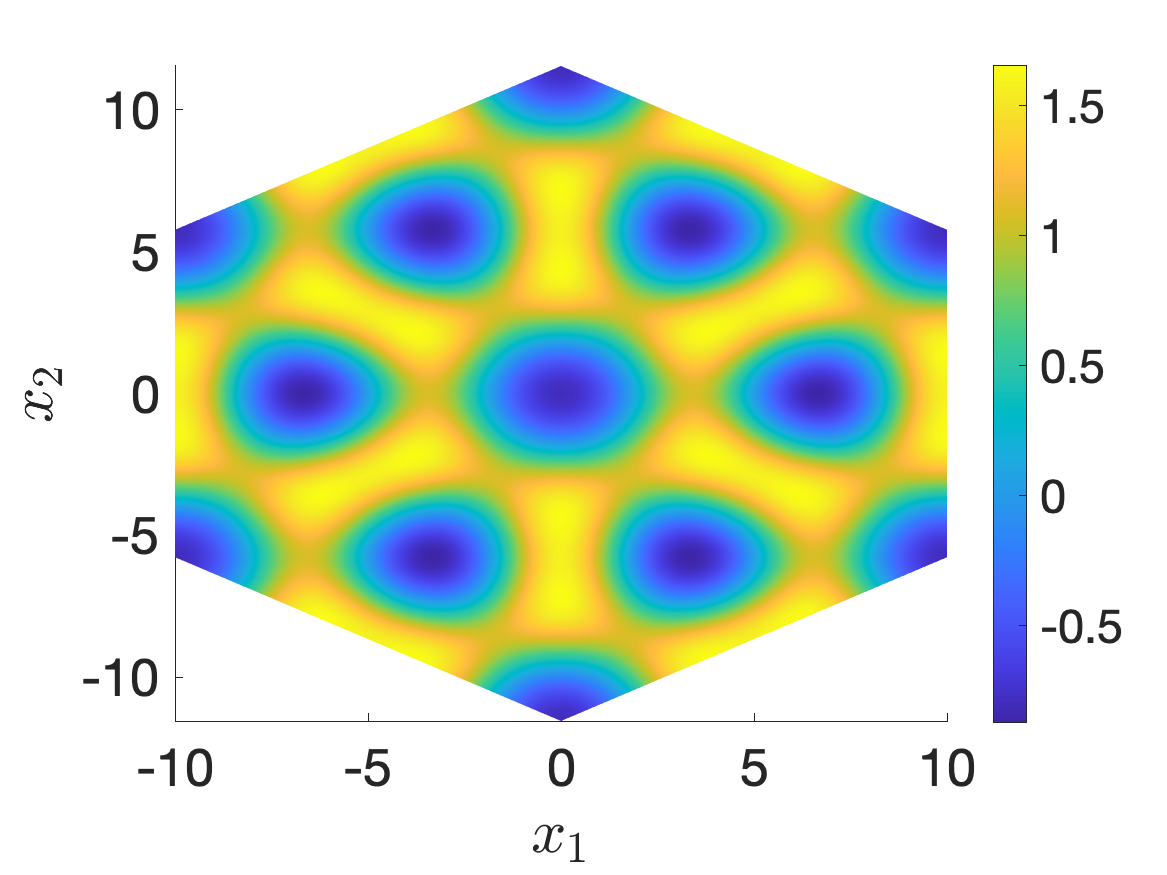,width=0.6\textwidth}
 \caption{Plot of $\overline{u}$ on $\varhexagon_0$ used in  Theorem \ref{th : 2_3}.}\label{fig : th2_3}
 \end{figure}%
\begin{theorem}[\bf The Fourth Hexagonal Solution]\label{th : 2_4}
Let $\mu = 0.25, \gamma = 2, d = 15$. Then there exists a unique solution $\tilde{u}$ to \eqref{def : L and G sh 2d} in $\overline{B_{r_0}(\overline{u})} \subset \ell^1_{6,1.09}$ and we have that $\|\tilde{u}-\overline{u}\|_{1,1.09} \leq r_0$. Moreover, $\tilde{u}$ is periodic on $\varhexagon_0$.
\end{theorem}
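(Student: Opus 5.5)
The proof will be computer-assisted, like the preceding theorems: apply Theorem \ref{th : radii polynomial theorem} in $\ell^1_{6,1.09}$ and then use Theorem \ref{th : hexagon periodic} for the periodicity statement.

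\emph{Step 1: approximate solution.} With $\mu = 0.25$, $\gamma = 2$, $d = 15$ and $N = 80$, I follow Section \ref{sec : numerical aspects}. I draw a random grid, take its Fourier approximation $\overline{U}_1$, symmetrize it onto $\mathcal{Z}_{\mathrm{red}}(D_6)$ via \eqref{u_0bar} using the $D_6$ sequence structure of \cite{dominic_dihedral_julia}, and run Newton until it converges. I then truncate to get $\overline{u} = \pi^N\overline{u}$.

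\emph{Step 2: approximate inverse.} Next I assemble the matrix $\pi^N Df(\overline{u})\pi^N$ (the linear part plus the multiplication operator by $-2\gamma\overline{u} + 3\overline{u}*\overline{u}$) and invert it numerically to obtain $A^N$. The tail of $A$ is the diagonal inverse of $L$, and $L_N$ from \eqref{def : L_N} is computed with interval arithmetic. Since $\mu > 0$, $L_N > 0$ is immediate.

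\emph{Step 3: rigorous bounds.} Using interval arithmetic \cite{julia_interval,julia_olivier}, I evaluate the following quantities:
\begin{itemize}
\item $Y_0$ and $Z_0$ from Lemma \ref{lem : Y0 and Z0}, computing $G(\overline{u})$ exactly on $I^{3N}$ since it is a finite convolution;
\item $Z_1 = \|A^N\phi\|_{1,\nu} + \|\overline{v}\|_{1,\nu}/L_N$ from Lemma \ref{lem : Z1};
\item $Z_2(r_0)$ from Lemma \ref{lem : Z2}, which requires $\|A^N\|_{\mathcal{B}(\ell^1_{6,\nu})}$ computed as the weighted maximum column sum with the orbit weights $\alpha_n$.
\end{itemize}
With $r_0 = 3\times 10^{-8}$, I then check the inequality \eqref{condition radii polynomial} in interval arithmetic. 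This yields existence and uniqueness of $\tilde{u} \in \overline{B_{r_0}(\overline{u})}$ together with the error estimate. Since the contraction acts within $\ell^1_{6,\nu}$, $\tilde{u}$ inherits $D_6$ symmetry, and Theorem \ref{th : hexagon periodic} gives periodicity on $\varhexagon_0$.

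\emph{Main obstacle.} I expect the difficulty to come from $d = 15$, the largest period considered. A larger domain means more Fourier modes fall below the instability threshold $|\mathcal{L}\tilde{n}| \approx 1$, so $N = 80$ may leave the tail poorly resolved. This shows up as a large $Z_1$ through $\|\overline{V}\|_\infty/\nu^{N+1}$ and as a large $\|A^N\|$.

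If $Z_0 + Z_1 + Z_2(r_0)r_0 < 1$ fails, I would first check how close $Z_1$ is to $1$ and consider refining Newton in higher precision. If that is not enough, I would replace the uniform $\phi$ by the sharper $n$-dependent estimate at \eqref{gabriel_stopping_point}. Only as a last resort would I increase $N$, at a cost in runtime, because convolutions unfold to the full grid.
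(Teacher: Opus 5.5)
Your proposal follows essentially the same route as the paper: build $\overline{u}$ and $A^N$ as in Section~\ref{sec : numerical aspects} with $N=80$ and $\nu=1.09$, rigorously evaluate the bounds of Lemmas~\ref{lem : Y0 and Z0}, \ref{lem : Z2} and \ref{lem : Z1}, verify the condition of Theorem~\ref{th : radii polynomial theorem} at $r_0=3\times10^{-8}$, and obtain periodicity on $\varhexagon_0$ from Theorem~\ref{th : hexagon periodic}. The paper's computed values ($\|A^N\|\le 44.624$, $Y_0=7.35\times10^{-9}$, $Z_0=3.13\times10^{-10}$, $Z_1=0.2204$, $Z_2(r_0)=3773.58$) give $Z_0+Z_1+Z_2(r_0)r_0\approx 0.22$, so the check passes comfortably and your fallback strategies are not needed.
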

\begin{proof}
We perform the full construction described in Section \ref{sec : numerical aspects} to build $\overline{u}$ and $A^N$. Using \cite{julia_blanco_D3D6,dominic_dihedral_julia}, we obtain
\begin{align}
    \|A^N\|_{\mathcal{B}(\ell^1_{6,1.09})} \leq 44.624 \text{,}~Y_0 \bydef 7.35 \times 10^{-9} \text{,}~Z_{2}(r_0) \bydef 3773.58   \text{,}~Z_1 \bydef 0.2204,~Z_0 \bydef 3.13 \times 10^{-10}.
    \end{align}
We prove that these values satisfy Theorem \ref{th : radii polynomial theorem}. 
\end{proof}
\begin{figure}[H]
\centering
\epsfig{figure=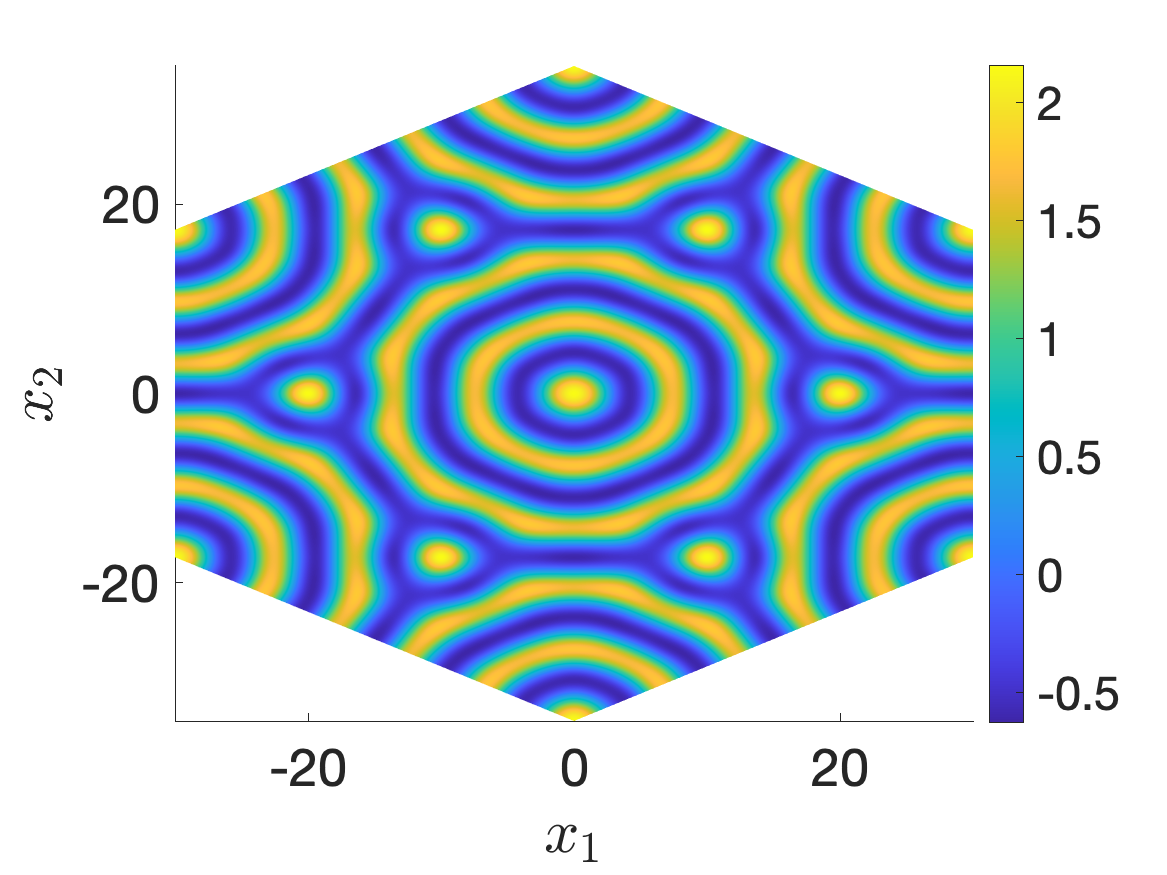,width=0.6\textwidth}
 \caption{Plot of $\overline{u}$ on $\varhexagon_0$ used in  Theorem \ref{th : 2_4}.}\label{fig : th2_4}
 \end{figure}%
\section{Branches of Hexagonal and Triangular Solutions}\label{sec : continuation}
In this section, we wish to prove the existence of a branch of periodic solutions whose periodic patterns can be generated by two triangles or one hexagon depending on the symmetry. There is a strong interest in branches of solutions in 2D Swift Hohenberg. In particular, the authors of \cite{jason_spot_paper} and \cite{jason_ring_paper} provide various approximate branches of solutions. Some of these branches emerge from a Turing bifurcation that lead to homoclinic snaking. By following the snaking curve on a bounded domain, one can obtain periodic patterns as shown in \cite{hexagon2008}. Additionally, a rich variety of purely periodic patterns exist in \eqref{eq : swift_hohenberg} with both hexagonal and triangular symmetry. We would like to provide rigorous proofs of the symmetry of these branches of solutions. That is, we would like to prove that the branch itself is $D_j$-symmetric. This means that each solution along the branch is a periodic pattern that can be generated by two triangles (in the case of $D_3$) or one hexagon (in the case of $D_6$). Note that we are not directly proving the branches given in the aforementioned papers. Our goal is to provide the theory to prove branches of periodic solutions with triangular and hexagonal symmetry. Our approach could then be used to prove specific branches such as the ones identified in these papers; however, we did not do so. Our approach will be computer-assisted and was developed by the authors of \cite{polynomial_chaos}. The approach is based on Chebyshev series, and allows us to parameterize the branch as a Chebyshev series expansion. It has been applied before in previous works, and we refer the interested reader to \cite{cadiot_witham,marschal} for examples.
\par For our purposes, we will be performing continuation in $\mu$. We will use pseudo-arclength continuation to perform our rigorous proof. The benefit of pseudo-arclength continuation is that we can pass through saddle node (or fold) bifurcations. We will require this in order to prove Theorems \ref{th : hexagon branch 2} and \ref{th : hexagon branch 3}. To begin, we define $X_{j,\nu} \bydef \mathbb{R} \times \ell^1_{j,\nu}$ and write $w \bydef (\mu,u) \in X_{j,\nu}$. We define its norm to be 
\begin{align}
    \|w\|_{X_{j,\nu}} \bydef |\mu| + \|u\|_{1,\nu}.
\end{align}
We also define $\mathbb{N}_0 \bydef \mathbb{N} \cup \{0\}$. Now, we will expand $\mu$ and $u$ as Chebyshev series dependent on the pseudo-arclength, $s$. That is, we write
\begin{align}
    \mu(s) \bydef \mu_0 + 2\sum_{k \in \mathbb{N}_0} \mu_k T_k (s) ~~\text{and}~~u(s) \bydef u_0 + 2\sum_{k \in \mathbb{N}_0 } u_k T_k (s), ~ s \in [-1,1]
\end{align} 
where $T_k: [-1,1] \to \mathbb{R}$ are the Chebyshev polynomials of the first kind and $\mu_k \in \mathbb{R}, u_k \in \ell^1_{j,\nu}$ for all $k \in \mathbb{N}_0$. We now introduce some additional spaces. The first is $\mathbb{R}_{con}$, which will consistent of Chebyshev sequences $\mu(s) \bydef (\mu_k)_{k \in 0,\dots,K}$ and each $\mu_k \in \mathbb{R}$. We now define its norm, and an immediate inequality which we will rely on
\begin{align}
     \|\mu(s)\|_{\mathbb{R}_{con}} \bydef \sup_{s \in [-1,1]} |\mu(s)|
     &= \sup_{s \in [-1,1]} \left| \mu_0 + 2\sum_{k \in \mathbb{N}_0} \mu_k T_k(s)\right| \\
     &\leq |\mu_0| + 2\sup_{s \in [-1,1]} \sum_{k \in \mathbb{N}_0} |\mu_k T_k(s)|  \\
     &\leq |\mu_0| + 2\sum_{k \in \mathbb{N}_0} |\mu_k|.
    \end{align}
We rely on the inequality above for every estimate we perform. As a result, we will introduce the notation
\begin{align}
    \|\mu(s)\|_{\mathbb{R}_{con,ineq}} \bydef |\mu_0| + 2\sum_{k \in \mathbb{N}_0} |\mu_k|.\label{Rconineq}
\end{align}
The second space we define is $\ell^1_{j,\nu,con}$, which will consist of Chebyshev sequences $u(s) \bydef (u_k)_{k \in 0,\dots,K}$ and each $u_k \in \ell^1_{j,\nu}$. Similarly to what was done for $\mathbb{R}_{con}$, we have
\begin{align}
    \|u(s)\|_{\ell^1_{j,\nu,con}} \bydef \sup_{s \in [-1,1]} \|u(s)\|_{1,\nu} &= \sup_{s \in [-1,1]} \left\| u_0 + 2\sum_{k \in \mathbb{N}_0} u_k T_k(s)\right\|_{1,\nu}\\
    &\leq \|u_0\|_{1,\nu} + 2\sup_{s \in [-1,1]} \sum_{k \in \mathbb{N}_0} \|u_k\|_{1,\nu}  |T_k(s)| \\
    &\leq \|u_0\|_{1,\nu} + 2\sum_{k \in \mathbb{N}_0} \|u_k\|_{1,\nu}.
    \end{align}
We again will exclusively use the inequality when doing our analysis. Hence, we write
\begin{align}
    \|u(s)\|_{\ell^1_{j,\nu,con,ineq}} \bydef \|u_0\|_{1,\nu} + 2\sum_{k \in \mathbb{N}_0} \|u_k\|_{1,\nu}.\label{ell1conineq}
\end{align}
The third space is $X_{j,\nu,con}$. 
 We write $w(s) \bydef (\mu(s),u(s)) \in \mathbb{R}_{con} \times \ell^1_{j,\nu,con} \bydef X_{j,\nu,con}$ and its norm
\begin{align}
    \|w(s)\|_{X_{j,\nu,con}} \bydef \sup_{s \in [-1,1]} \|w(s)\|_{X_{j,\nu}} &= \sup_{s \in [-1,1]} \left\| w_0 + 2\sum_{k \in \mathbb{N}_0} w_k T_k(s)\right\|_{X_{j,\nu}}\\
    &\leq \|w_0\|_{X_{j,\nu}} + 2\sup_{s \in [-1,1]}\sum_{k\in \mathbb{N}_0} \|w_{k} \|_{X_{j,\nu}} |T_k(s)| \\
    &\leq  \|w_0\|_{X_{j,\nu}} + 2\sum_{k \in \mathbb{N}_0} \|w\|_{X_{j,\nu}}.
\end{align}
Once again, we write
\begin{align}
    \|w(s)\|_{X_{j,\nu,con,ineq}} \bydef \|w_0\|_{X_{j,\nu}} + 2\sum_{k \in \mathbb{N}_0} \|w\|_{X_{j,\nu}}.\label{Xjnuconineq}
\end{align}
With the spaces above and their norm estimates defined, we then define the map $F : X_{j,\nu,con} \to \mathcal{S}$ for $\mathcal{S}$ another Banach space as
\begin{align}
    F(w(s)) \bydef \begin{bmatrix}
        (u(s) - \overline{u}(s),\dot{u}(s))_2 \\
        f(\mu(s),u(s))
    \end{bmatrix}\label{eq : psuedo arclength system}
\end{align}
where $\dot{u}(s)$ is the second component of the \emph{tangent vector}, $\dot{w}(s)$. We can then construct $\overline{w}(s) \bydef (\overline{\mu}(s),\overline{u}(s))$ but with finitely many Chebyshev coefficients $K$. This will be the topic of the next section. Furthermore, we can use the same construction for the approximate inverse of $DF(\overline{w}(s))$. Firstly, since $\overline{w}(s)$ has finitely many Chebyshev coefficients, so does $DF(\overline{w}(s))$. Hence, it makes sense to define the approximate inverse with finitely many coefficients as well. We denote it by $B(s) : \mathcal{S} \to X_{j,\nu,con}$ 
\begin{align}
    B(s) = B_0 + 2 \sum_{k = 1}^{K} B_k T_k (s).
\end{align}
Then, the following theorem makes use of a uniform contraction argument to allow us to rigorously prove a branch of solutions. 
\begin{theorem}[Newton-Kantorovich Theorem for Branches]
\label{th: radii polynomial continuation}
Let $B^{\dagger}(s) \in \mathcal{B}(X_{j,\nu,con},\mathcal{S})$. Also let $B(s) \in \mathcal{B}(\mathcal{S},X_{j,\nu,con})$ be injective. Moreover, let $Y_0,Z_0 Z_1$ be non-negative constants and let  $Z_2 : (0, \infty) \to [0,\infty)$ be a non-negative function  such that
  \begin{align}\label{eq: definition Y0 Z1 Z2 continuation}
    \sup_{s \in [-1,1]}\|B(s)F(\overline{w}(s))\|_{X_{j,\nu,con}} & \le Y_0\\
    \sup_{s \in [-1,1]}\|I_d - B(s)B^{\dagger}(s))\|_{\mathcal{B}(X_{j,\nu,con})} &\le Z_0\\
    \sup_{s \in [-1,1]} \|B(s)(DF(\overline{w}(s)) - B^{\dagger}(s))\|_{\mathcal{B}(X_{j,\nu,con})} &\leq Z_1 \\
    \sup_{s \in [-1,1]}\|B(s)\left({D}F(w(s) - DF(\overline{w}(s))\right)\|_{\mathcal{B}(X_{j,\nu,con})} & \le Z_2(r)r, ~~ \text{for all } w(s) \in B_r(w_0(s))
\end{align}  
If there exists $r>0$ such that 
\begin{equation}\label{condition radii polynomial continuation}
    \frac{1}{2}Z_2(r)r^2 - (1- Z_0 - Z_1)r + Y_0 <0, \ and \ Z_0 + Z_1 + Z_2(r)r < 1
 \end{equation}
 is satisfied, 
then for every $s \in [-1,1]$, there exists a unique $\tilde{w}(s) \in \overline{B_r(\overline{w}(s))} \subset X_{j,\nu,con}$ such that $F(\tilde{w}(s))=0$. Moreover, the function $s \to \tilde{w}(s)$ is of class $C^{\infty}.$
\end{theorem}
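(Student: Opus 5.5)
We will prove Theorem \ref{th: radii polynomial continuation} by reducing it, for each fixed $s \in [-1,1]$, to the already established Newton--Kantorovich statement of Theorem \ref{th : radii polynomial theorem}, applied with $X_{j,\nu}$ in place of $\ell^1_{j,\nu}$. We then recover regularity in $s$ from uniformity and the implicit function theorem. The key observation is that the bounds are \emph{uniform} in $s$. Since the suprema over $s$ are dominated by $Y_0, Z_0, Z_1, Z_2(r)$, for every fixed $s$ the operators $B(s)$ and $B^\dagger(s)$, with $\overline{w}(s)$ and $F$ frozen at that $s$, satisfy the four hypotheses of Theorem \ref{th : radii polynomial theorem} with the same constants. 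The same $r$ satisfies \eqref{condition radii polynomial continuation}.

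For fixed $s$, we first check that $B(s)DF(\overline{w}(s))$ is invertible. We write
\[
B(s)DF(\overline{w}(s)) = I_d - \bigl(I_d - B(s)B^\dagger(s)\bigr) - B(s)\bigl(B^\dagger(s) - DF(\overline{w}(s))\bigr),
\]
whose perturbation has norm at most $Z_0 + Z_1 < 1$. A Neumann series then gives invertibility. Next we consider the Newton-like map $T_s(w) \bydef w - B(s)F(w)$ on $\overline{B_r(\overline{w}(s))}$. By the mean value inequality,
\[
\|DT_s(w)\| \le \|I_d - B(s)B^\dagger(s)\| + \|B(s)(DF(\overline{w}(s)) - B^\dagger(s))\| + \|B(s)(DF(w) - DF(\overline{w}(s)))\| \le Z_0 + Z_1 + Z_2(r) r < 1 .
\]
The radii polynomial inequality then gives
\[
\|T_s(w) - \overline{w}(s)\| \le Y_0 + (Z_0 + Z_1)r + \tfrac12 Z_2(r) r^2 < r,
\]
where the quadratic term comes from integrating the $Z_2$ bound along the segment. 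So $T_s$ is a contraction of the closed ball into itself. Banach's fixed point theorem yields a unique fixed point $\tilde w(s)$. Injectivity of $B(s)$ turns $B(s)F(\tilde w(s)) = 0$ into $F(\tilde w(s)) = 0$. Uniqueness of a zero in the ball follows since any zero is a fixed point of $T_s$. This is exactly the content of Theorem \ref{th : radii polynomial theorem}, so we may simply invoke it pointwise.

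For smoothness, we note that $F$ is polynomial in $(\mu,u)$, the Swift--Hohenberg nonlinearity being cubic, with coefficients polynomial in $s$ through $\overline{u}(s)$ and $\dot u(s)$ in the phase condition. Hence $(s,w) \mapsto F_s(w)$ is $C^\infty$, indeed analytic. At $w = \tilde w(s_0)$, the derivative $D_wF$ is invertible. Indeed, $B(s_0)D_wF(\tilde w(s_0))$ is invertible by the same Neumann argument using $Z_0 + Z_1 + Z_2(r)r < 1$, so $D_wF(\tilde w(s_0))$ is injective with a left inverse. Surjectivity follows because $B(s_0)$ is injective and $B(s_0)D_wF$ is onto, which forces $D_wF$ to be onto as well. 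The implicit function theorem then produces a $C^\infty$ local curve of zeros through $\tilde w(s_0)$. By continuity it stays in $B_r(\overline{w}(s))$ for nearby $s$, because $\tilde w(s_0)$ lies strictly inside: the strict radii inequality actually gives a radius $r' < r$. Local uniqueness then identifies this curve with $\tilde w(s)$, which gives $s \mapsto \tilde w(s) \in C^\infty$.

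The main obstacle is the surjectivity step for $D_wF$ and its consistency with the abstract spaces. The argument needs $B(s)$ to be injective and the Neumann estimate to transfer from $\overline{w}$ to $\tilde w$. I would handle this by showing $D_wF(\tilde w)$ is bijective through the factorization above. I would also be careful that the suprema in the hypotheses are taken with $w(s)$ ranging over the ball centred at $\overline{w}(s)$, reading ``$w_0(s)$'' as $\overline{w}(s)$.
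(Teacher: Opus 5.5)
Your proposal is correct and is essentially the standard uniform-contraction argument that the paper delegates to its references; the paper gives no proof of its own beyond the citation. The argument is Newton--Kantorovich at each fixed $s$ with $s$-independent constants, followed by the implicit function theorem at $\tilde w(s_0)$, where you correctly obtain invertibility of $D_wF(\tilde w(s_0))$ from the Neumann bound together with injectivity of $B(s_0)$. One ingredient is left implicit: the $\tfrac12 Z_2(r)r^2$ term, and hence the interiority $\tilde w(s)\in\overline{B_\rho(\overline{w}(s))}$ with $\rho = Y_0+(Z_0+Z_1)r+\tfrac12 Z_2(r)r^2<r$ that you use to match the implicit-function curve with $\tilde w$, relies on $Z_2$ being nondecreasing (so that the bound at radius $tr$ is at most $Z_2(r)\,tr$); the paper's statement also leaves this tacit, and it holds for the $Z_2$ of Lemma \ref{lem : Z2s}.
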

\begin{proof}
The proof can be found in \cite{continuation_1}, \cite{continuation_2}, and \cite{continuation_3} for instance. 
\end{proof}
\begin{remark}
Note that the constants $Y_0, Z_0, Z_1,$ and $Z_2$ from Theorem \ref{th: radii polynomial continuation} are not the same as those defined in \ref{th : radii polynomial theorem}. The constants in Theorem \ref{th: radii polynomial continuation} are computed for branches of periodic solutions, whereas from Theorem \ref{th : radii polynomial theorem} they are for a single periodic solution. We have abused notation and given them the same names for uniformity with previous literature in CAPs.   
\end{remark}
Let us now discuss the numerical aspects of this approach in order to apply Theorem \ref{th: radii polynomial continuation}. 
\subsection{Construction of \texorpdfstring{$\overline{w}(s),B^{\dagger}(s)$}{WBdag} and \texorpdfstring{$B(s)$}{B}}\label{sec : continuation construction}
In this section, we will discuss the computation of $\overline{w}(s), B^{\dagger}(s),$ and $B(s)$. Our approach will rely on numerical pseudo-arclength continuation and some bijective transformation $\mathcal{T}$ which will allow us to move from grid points to Chebyshev coefficients and vise versa. To compute each $\overline{w}_k \bydef (\overline{\mu}_k,\overline{u}_k)$ for the chosen number of Chebyshev coefficients $K$, we first fix the number of grid points, denoted $K_{grid}$. Following this, we fix an arclength, denoted $s_{fix} \in \mathbb{R}^+$, and create a grid of points
\begin{align}
    s_{grid,p} \bydef \frac{1}{2} s_{fix} - \frac{1}{2}s_{fix} \cos \left(\frac{\pi k}{K_{grid}}\right) ~~ \text{for} ~~ p = 0,\dots,K_{grid}-1.
\end{align}
The grid of $s_{grid,p}$'s is meant to give us the step-sizes for our pseudo-arclength continuation. That is, the step-size of each pseudo-arclength continuation step will be $ds_p \bydef s_{grid,p} - s_{grid,p-1}$. To explain further, suppose we have obtained an approximate solution, $\overline{w}_{grid,p} \bydef (\overline{\mu}_{grid,p},\overline{u}_{grid,p})$ to $f(\overline{w}_{grid,p}) \approx 0$ and computed the tangent vector to the branch, denoted $\dot{w}_{grid,p}$. Then, we define
\begin{align}
    \overline{w}_{grid,p+1} \bydef \overline{w}_{grid,p} \pm ds_k \dot{w}_{grid,p}
\end{align}
where we choose the sign of $ds_k$ depending on the direction we wish to continue. We do this for all $k = 0,\dots,K_{grid} - 1$.
At this point, we have a grid of approximate solutions $\overline{w}_{grid,p}$ for all $k = 0,\dots,K_{grid}$ to $f(\overline{w}_{grid,p}) \approx 0$. We then use our inverse transformation $\mathcal{T}^{-1}$ (which exists since $\mathcal{T}$ is bijective) to compute the approximate branch of solutions $\overline{w}(s) \bydef (\overline{w}_n)_{n \in \{0,\dots,K\}}$. That is, we use the inverse transform on the grid points to recover the Chebyshev coefficients $\overline{w}(s)$. Note that we can go back and forth between $\overline{w}(s)$ and the grid points $\overline{w}_{grid,p}$ using $\mathcal{T}$ and $\mathcal{T}^{-1}$ respectively.
To demonstrate the connection, we would approximately have
\begin{align}
\overline{w}\left(\frac{2(s_{grid,p} - s_{fix})}{s_{fix}} + 1\right) \approx \overline{w}_{grid,p}, ~~ \text{for} ~~ p = 0,\dots,K_{grid} - 1
\end{align}
when evaluating at the grid points $s_{grid,p}$. 
\par Let us now move to the discussion of $B^{\dagger}(s)$ and $B(s)$. To begin, we would like to construct a grid of $B^{\dagger}_{grid,p}$ and approximate inverses $B_{grid,p}$ for $p = 0,\dots, K_{grid} - 1$. That is, we would like to have $B^{\dagger}_{grid,p} \approx DF(\overline{w}_{grid,p})$ and $B_{grid,p} \approx DF(\overline{w}_{grid,p})^{-1}$ for each $p$. We first define some notations. We introduce $\Pi^N, \Pi_N$ and $\mathscr{h}_n$ for $\mathscr{h} \bydef (\eta,h) \in X_{j,\nu}$. These are
\begin{align}
    \Pi^N \bydef \begin{bmatrix}
        1 & 0 \\
        0 & \pi^N 
    \end{bmatrix}, ~~ \Pi_N \bydef \begin{bmatrix}
        0 & 0 \\
        0 & \pi_N 
    \end{bmatrix}, ~~(\mathscr{h})_n \bydef (\eta,h_n).
\end{align}
Then, we define the action of $B^{\dagger}_{grid,p}$ as
\begin{align}
    (B^{\dagger}_{grid,p}\mathscr{h})_n \bydef \begin{cases}
        \left[\Pi^N DF(\overline{w}_{grid,p}) \Pi^N\mathscr{h}\right]_n & n \in I^N \\
        \begin{bmatrix}
            0 \\
            ((1 + |\mathcal{L} \tilde{n}|^2)^2) h_n
        \end{bmatrix} & n \in \mathcal{Z}_{\mathrm{red}}(D_j) \setminus I^N
    \end{cases} 
\end{align}
Note that we have excluded the $\overline{\mu}_{grid,p}$ in the tail for simplicity.
Following this, we choose $B^N_{grid,p} \approx (\Pi^N DF(\overline{w}_{grid,p})\Pi^N)^{-1}$ and then each $B_{grid,p}$ is defined as
\begin{align}
    (B_{grid,p}\mathscr{h})_n \bydef \begin{cases}
        (B^N_{grid,p}\Pi^N \mathscr{h})_n & n \in I^N \\
        \begin{bmatrix}
            0 \\
            \frac{h_n}{(1 + |\mathcal{L}\tilde{n}|^2)^2}
        \end{bmatrix} & n \in \mathcal{Z}_{\mathrm{red}}(D_j) \setminus I^N
    \end{cases}.
\end{align}
At this point, we now use $\mathcal{T}^{-1}$ again to compute the approximate branch of $B^{\dagger}(s)$ and  approximate inverses $B(s)$. As before, we would approximately have
{\small\begin{align}
    &B^{\dagger}\left(\frac{2(s_{grid,p} - s_{fix})}{s_{fix}} + 1\right) \approx B^{\dagger}_{grid,p},~B\left(\frac{2(s_{grid,p} - s_{fix})}{s_{fix}} + 1\right) \approx B_{grid,p}, ~~ \text{for} ~~ p = 0,\dots,K_{grid} - 1
\end{align}}
when evaluating at the grid points $s_{grid,p}$. Finally, we need an estimate for $\|\Pi_N B(s)\|_{\mathcal{B}(X_{j,\nu,con})}$. To begin, let $z(s) \in X_{j,\nu,\mathrm{con}}$. Notice that since $B(s)$ has $K$ many Chebyshev coefficients, we get
\begin{align}
    \|\Pi_N B(s)\|_{\mathcal{B}(X_{j,\nu,con})} &= \|\Pi_N B(s)z(s)\|_{X_{j,\nu,con}} \\
    &\leq \|\Pi_N B_0 z_0\|_{X_{j,\nu}} + 2\sum_{k = 1}^{K} \|\Pi_N B_k z_k \|_{X_{j,\nu}} \\
    &\leq \|\Pi_N B_0\|_{\mathcal{B}(X_{j,\nu})} \|z_0\|_{X_{j,\nu}} + 2\sum_{k = 1}^{K} \|\Pi_N B_k \|_{\mathcal{B}(X_{j,\nu})} \|z_k\|_{X_{j,\nu}}.
\end{align}
Now, we have
{\small\begin{align}
    \|\Pi_N B_k\|_{\mathcal{B}(X_{j,\nu})} \leq \max_{m \in \mathbb{Z}^2 \setminus I^N} \frac{1}{|(1+|\mathcal{L}|\tilde{m}|^2)^2|} \leq  \max_{m \in \mathbb{Z}^2 \setminus I^N} \frac{1}{|(1 + |\mathcal{L} \tilde{m}|^2)^2|} \bydef \frac{1}{L_{N,0}},
\end{align}}
where
\begin{align}
    L_{N,0} \bydef \min_{m \in \mathbb{Z}^2 \setminus I^N} |(1 + |\mathcal{L} \tilde{m}|^2)^2|.\label{def : L_NK}
\end{align}
The final estimate is uniform which is why we excluded $\mu$ in the definition of the tail for $B_{grid,p}$ and $B^{\dagger}_{grid,p}$. Indeed, we obtain
\begin{align}
    \|\Pi_N B(s) \|_{\mathcal{B}(X_{j,\nu,con})} &\leq \|\Pi_N B_0\|_{\mathcal{B}(X_{j,\nu})} \|\Pi_N z_0\|_{X_{j,\nu}} + 2\sum_{k = 1}^{K} \|\Pi_N B_k \|_{\mathcal{B}(X_{j,\nu})} \|\Pi_N z_k \|_{X_{j,\nu}} \\
    &\leq \frac{1}{L_{N,0}} \|z_0\|_{X_{j,\nu}} + 2 \sum_{k = 1}^{K} \frac{1}{L_{N,0}}\|z_k \|_{X_{j,\nu}}  \\
    &= \frac{1}{L_{N,0}} \left( \|z_0\|_{X_{j,\nu}} + 2\sum_{k = 1}^K \|z_k \|_{X_{j,\nu}}\right) \\
    &\leq \frac{1}{L_{N,0}} \left(\|z(s)\|_{X_{j,\nu,con}} + 2\sum_{k = 1}^K \|z(s)\|_{X_{j,\nu,con}}\right) \\
    &=\frac{2K+1}{L_{N,0}}.\label{B(s)_norm_estimate}
\end{align}
Note that the factor $2K+1$ appears as we had to use the bound  
$ \|z_p\|_{X_{j,\nu}} \leq \|z(s)\|_{X_{j,\nu}}$ for each $p$ in order to obtain a uniform estimate. We would have no reason to expect the sum of the first $K$ terms to be bounded by $\|z(s)\|_{X_{j,\nu}}$ in general. Note that in certain cases, we can avoid this additional factor as we can use 
\begin{align}
    \frac{1}{L_{N,0}} \left( \|z_0\|_{X_{j,\nu}} + 2\sum_{k = 1}^K \|z_k \|_{X_{j,\nu}}\right) &\leq  \frac{1}{L_{N,0}} \left( \|z_0\|_{X_{j,\nu}} + 2\sum_{k \in \mathbb{N}_0} \|z_k \|_{X_{j,\nu}}\right) \\
    &= \frac{1}{L_{N,0}} \|z(s)\|_{X_{j,\nu,con,ineq}}.\label{tail ineq estimate}
\end{align}
This result still depends on $z(s)$; however, in cases where its $X_{j,\nu,con,ineq}$ norm is computable, it can provide us with a sharper result. As mentioned previously, we use the $ineq$-norm estimates in practice whenever the norm is computable; hence, this estimate will be useful for our computations.
This is a description of the numerical tools needed in order to apply Theorem \ref{th: radii polynomial continuation}. Let us now discuss the computation of the bounds.
\subsection{Computing the Bounds for Branches of Solutions}
In this section, we will compute the bounds $Y_0, Z_0, Z_1,$ and $Z_2(r)$. These computations will be similar to those done in Section \ref{sec : bounds}, but involve the use of the transformation $\mathcal{T}$ and its inverse. First, let us introduce some notation. We will choose $\mathcal{T}$ to the Fourier Transform, $\mathcal{F}$. For our purposes, we will use the Fast Fourier Transform (FFT) algorithm. This means we would choose $K_{grid} \bydef \frac{K_{\mathrm{FFT}}}{2}$ for some  $K_{\mathrm{FFT}}$ which is a power of $2$ larger than $K$. In other words, $K_{\mathrm{FFT}}$ is at least the next power of $2$ after $K$. Let 
$\mathcal{F}_{\mathcal{K}}[\cdot]$ denote the Fourier transform with $\mathcal{K}$ coefficients of a Chebyshev sequence. That is, for a chosen $\mathcal{K}$ we will compute the next power of $2$ and zero pad the Chebyshev sequence if necessary. Then we take the Chebyshev FFT. We similarly define $\mathcal{F}_{\mathcal{K}}^{-1}[\cdot]$ to be the analogous definition for the Inverse Fast Fourier Transform (IFFT). At this point, we also define pointwise multiplication, denoted $.*$ as
\begin{align} 
    w(s).*z(s) \bydef (w_0z_0,w_1z_1,\dots,w_{\mathcal{K}}z_{\mathcal{K}})~~ \text{where} ~~ (w(s))_{k \in \{0,\dots,\mathcal{K}\}}, (z(s))_{k \in \{0,\dots,\mathcal{K}\}}.
\end{align}
We are now ready compute $Y_0$ and $Z_0$.
\begin{lemma}\label{lem : Y0s}
Let $L_{N,0}$ be defined as in \eqref{def : L_NK}. Let $Y_{0}, Z_0$ be defined as
{\footnotesize\begin{align}
    Y_0 \bydef  \|\mathcal{F}_{4K}^{-1}[\mathcal{F}_{4K}[B^N(s)].*F(\mathcal{F}_{4K}[\overline{w}(s)])\|_{X_{j,\nu,con}}+ \frac{1}{L_{N,0}}\left\|\mathcal{F}_{3K}^{-1}[(\pi^{3N} - \pi^N)G(\mathcal{F}_{3K}[\overline{w}(s)])]\right\|_{\ell^1_{j,\nu,con,ineq}}.
\end{align}}
\vspace{-0.5cm}
\begin{align}
    Z_0 \bydef \left\| \mathcal{F}^{-1}_{2K}\left[ \Pi^N - \mathcal{F}_{2K}[B^N(s)].*\mathcal{F}_{2K}[\Pi^NDF(\overline{w}(s))\Pi^N]\right]\right\|_{\mathcal{B}(X_{j,\nu,con})}.
\end{align}
Then, we have $\sup_{s \in [-1,1]} \|B(s)F(\overline{w}(s))\|_{X_{j,\nu,con}} \leq Y_0$ and $\|I_d - B(s)B^{\dagger}(s)\|_{\mathcal{B}(X_{j,\nu,con})} = Z_0$.
\end{lemma}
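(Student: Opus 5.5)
I plan to mirror the proof of Lemma \ref{lem : Y0 and Z0}, with two additions. First, every product of Chebyshev series must be realised exactly through the transform $\mathcal{F}$. Second, the tail estimate \eqref{tail ineq estimate} replaces the scalar bound $1/L_N$.

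For $Y_0$, I fix $s\in[-1,1]$ and split $B(s)F(\overline{w}(s))$ with $\Pi^N$ and $\Pi_N$, giving
\begin{align}
\|B(s)F(\overline{w}(s))\|_{X_{j,\nu}} \leq \|B^N(s)\Pi^N F(\overline{w}(s))\|_{X_{j,\nu}} + \|\Pi_N B(s) F(\overline{w}(s))\|_{X_{j,\nu}}.
\end{align}

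\emph{Finite part.} $B^N(s)$ has $K$ Chebyshev coefficients. Since $G$ is cubic and the phase condition is a product, $F(\overline{w}(s))$ is a polynomial in $s$ of degree at most $3K$. Hence the product $B^N(s)F(\overline{w}(s))$ has degree at most $4K$. Sampling at $4K$ (zero-padded, power-of-two) Chebyshev nodes and applying $\mathcal{F}^{-1}_{4K}$ therefore recovers its Chebyshev coefficients exactly: pointwise products at the nodes followed by the inverse transform reproduce the coefficient convolution. Taking the supremum over $s$ then gives the first term of $Y_0$. In practice one uses the ineq-norm upper bound, which is still an upper bound.

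\emph{Tail.} The linear part is diagonal and vanishes on $\Pi_N$ of the residual, so only $\pi_N G(\overline{u}(s))=(\pi^{3N}-\pi^N)G(\overline{u}(s))$ survives. The $\mu$ component is also zero in the tail by the definition of $B_{grid,p}$. This residual has degree $3K$ in $s$, so $\mathcal{F}_{3K}$ represents it exactly. I then apply \eqref{tail ineq estimate} with $z(s)$ equal to this tail residual, which bounds the tail by $\frac{1}{L_{N,0}}$ times its $\ell^1_{j,\nu,con,ineq}$ norm. Using \eqref{tail ineq estimate} rather than \eqref{B(s)_norm_estimate} avoids the factor $2K+1$. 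Adding the two parts and taking $\sup_s$ proves the $Y_0$ claim.

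For $Z_0$, I check that the tails of $B(s)$ and $B^\dagger(s)$ are exact inverses. Both use $(1+|\mathcal{L}\tilde n|^2)^2$ with no $\mu$-dependence and no $s$-dependence, so $\Pi_N(I_d-B(s)B^\dagger(s))=0$ for every $s$. This leaves $\Pi^N-B^N(s)\Pi^N DF(\overline{w}(s))\Pi^N$. Its first factor has degree $K$ in $s$ and its second at most $K$, since $DF$ at $\overline w$ depends linearly on $\overline w(s)$ through the relevant entries. The product is therefore a polynomial of degree at most $2K$, and $\mathcal{F}_{2K}$ followed by $\mathcal{F}^{-1}_{2K}$ represents it exactly, which gives equality.

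The step I expect to be the main obstacle is justifying these degree counts and the exactness of the discrete Chebyshev transform. In particular one must check that $DF(\overline{w}(s))$ really has degree at most $2K$. It contains $\overline u(s)^2$ through the $3u^2$ term, which may force $\mathcal{F}_{3K}$ rather than $\mathcal{F}_{2K}$. I would first try to argue that the choice of $\mathcal{F}_{2K}$ is correct; failing that, I would note that increasing the grid size only reproduces the same coefficients, so the stated equality holds provided the grid exceeds the true degree. Beyond this, the remaining steps are routine repetitions of Lemma \ref{lem : Y0 and Z0}.
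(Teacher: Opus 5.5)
Your outline is the paper's proof. The $\Pi^N/\Pi_N$ splitting, the degree counts ($3K$ for $F(\overline w(s))$, $4K$ for $B^N(s)F(\overline w(s))$), the use of \eqref{tail ineq estimate} on $(\pi^{3N}-\pi^N)G(\overline w(s))$ to avoid the factor $2K+1$, and the exact cancellation of the $\mu$-free tails of $B(s)$ and $B^\dagger(s)$ are all exactly what the paper does. The $Y_0$ half is fine.

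The gap is the degree count for $Z_0$, which is the obstacle you flagged yourself. Your earlier sentence that $DF$ at $\overline w$ ``depends linearly on $\overline w(s)$'' is false. The $u$-block of $DF(\overline w(s))$ is $(I_d+\nabla^2)^2+\overline\mu(s)-2\gamma\overline u(s)+3\overline u(s)^2$, with the last two terms acting as convolution operators. It therefore has degree $2K$ in $s$; Lemma \ref{lem : Z1s} itself treats $\overline v(s)=-2\gamma\overline u(s)+3\overline u(s)^2$ as a polynomial of order $2K$. Consequently $B^N(s)\Pi^N DF(\overline w(s))\Pi^N$ has degree $3K$, not $2K$.

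The paper's proof makes the same slip: it asserts that $\Pi^N DF(\overline w(s))\Pi^N$ has order $K$. So do not try to justify $\mathcal{F}_{2K}$; it cannot be done. With fewer than $3K+1$ Chebyshev coefficients, the inverse transform returns the coefficients of the aliased interpolant of the nodal values. For instance, with $K=31$, $2K=62$ is padded to $64$ while $3K=93$. The norm of that interpolant need not equal, nor even bound, the norm of $\Pi^N-B^N(s)\Pi^N DF(\overline w(s))\Pi^N$.

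Your fallback is the correct repair, but it proves a corrected lemma, not the one as stated. Define $Z_0$ with $\mathcal{F}_{3K}$ (any grid with more than $3K$ nodes), and your argument then goes through verbatim. The only way to keep $\mathcal{F}_{2K}$ would be to take the finite part of $B^\dagger(s)$ to be a degree-$K$ Chebyshev approximation of $\Pi^N DF(\overline w(s))\Pi^N$. But then $\Pi^N(DF(\overline w(s))-B^\dagger(s))\Pi^N\neq 0$, and that difference would have to be added to $Z_1$ in Lemma \ref{lem : Z1s}.
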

\begin{proof}
Beginning with $Y_0$, observe that we can write
{\small\begin{align}
    \|B(s) F(\overline{w}(s))\|_{X_{j,\nu,con}} &\leq \left\| \Pi^N B(s) F(\overline{w}(s))\right\|_{X_{j,\nu,con}} + \left\| \Pi_N B(s) F(\overline{w}(s))\right\|_{X_{j,\nu,con}} \\
    &\leq \|B^N(s)F(\overline{w}(s))\|_{X_{j,\nu,con}} + \left\| \Pi_NB(s)\right\|_{\mathcal{B}(X_{j,\nu,con})} \left\| \begin{bmatrix}
        0 \\
        \pi_N f(\overline{w}(s))
    \end{bmatrix}\right\|_{X_{j,\nu,con,ineq}} \\
    &= \|B^N(s)F(\overline{w}(s))\|_{X_{j,\nu,con}} + \frac{1}{L_{N,0}} \left\| (\pi^{3N} - \pi^N)G(\overline{w}(s))\right\|_{\ell^1_{j,\nu,con,ineq}}    \end{align}}
where we used \eqref{tail ineq estimate} on the final step. Note that the use of \eqref{Xjnuconineq} is sufficient as the term $G(\overline{w}(s))$ is computable. We must now compute the pointwise products, which we will do this using the FFT. With respect to $s$, $B^N(s)$ is a polynomial of order $K$ and $F(\overline{w}(s))$ is a polynomial of order $3K$. Hence, it follows that $B(s)F(\overline{w}(s))$ is a polynomial of order $4K$ with respect to $s$. Therefore, we will use $4K$ Chebyshev coefficients when computing our Fourier transforms for this term. Moreover, $G(\overline{w}(s))$ is a polynomial of order $3K$, so we can use $3K$ for this Fourier transform. Once we have found this, we can directly apply the result of Lemma \ref{lem : Y0 and Z0} for $Y_0$ to obtain the estimate.
\par For $Z_0$, similarly as in Lemma \ref{lem : Y0 and Z0}, the tails of $B(s)$ and $B^{\dagger}(s)$ cancel exactly. This leave us with with the finite parts. Note that $B^N(s)$ and $\Pi^N DF(\overline{w}(s))\Pi^N$ are polynomials of order $K$. Hence, it follows that $B^N(s)\Pi^N DF(\overline{w}(s))\Pi^N$ is a polynomial of order $2K$ with respect to $s$. Therefore, we use $2K$ Chebyshev coefficients when computing our Fourier transforms. Once we have this, we directly apply the result of Lemma \ref{lem : Y0 and Z0} to obtain the estimate. 
\end{proof}
Next, we must compute $Z_2(r)$. Let us state the lemma for it.
\begin{lemma}\label{lem : Z2s}
Let $q(s) = (q_n)_{n \in \{0,\dots,K\}}$ and each $q_n$ is a Fourier series with $kth$ coefficient $(q_n)_k = -2\gamma \delta_k + 6(\overline{u}_n)_k)$ where $\overline{u}(s) = (\overline{u}_n)_{n \in \{ 0,\dots,K\}}$, $(\overline{u}_n)_k$ is the $kth$ Fourier coefficient, and  $\delta_k$ is defined as in \eqref{kronecker delta}. Let $L_{N,0}$ be defined as in \eqref{def : L_NK}. Now, let $Z_2(r) : (0,\infty) \to [0,\infty)$ be defined as
\begin{align}
    Z_2(r) \bydef \max\left(\|B^N(s)\|_{\mathcal{B}(X_{j,\nu,con})}, \frac{2K+1}{L_{N,0}}\right)\left(1 + \|q(s)\|_{\ell^1_{j,\nu,con}} + 3r\right).
    \end{align}
Then, it follows that $\sup_{s \in [-1,1]} \|B(s)(DF(w(s)) - DF(\overline{w}(s)))\|_{\mathcal{B}(X_{j,\nu,con})} \leq Z_2(r)r$.
\end{lemma}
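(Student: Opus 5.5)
I would follow the argument of Lemma \ref{lem : Z2} almost verbatim, with two changes: $B(s)$ is split into its finite and tail parts, which accounts for the factor $2K+1$, and the extra scalar row of the augmented system \eqref{eq : psuedo arclength system} accounts for the additive $1$.

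First I would bound the operator norm of $B(s)$ in $\mathcal{B}(X_{j,\nu,con})$. Write $B(s)=\Pi^N B(s)+\Pi_N B(s)$, with $\Pi^N B(s)=B^N(s)$ and $\|\Pi_N B(s)\|_{\mathcal{B}(X_{j,\nu,con})}\le \frac{2K+1}{L_{N,0}}$ by \eqref{B(s)_norm_estimate}. Since $\Pi^N$ and $\Pi_N$ act on disjoint index sets and the norm of $X_{j,\nu}$ is an $\ell^1$-type sum over indices, for any $z$ I get $\|B(s)z\|\le \|B^N(s)\Pi^Nz\|+\|\Pi_NB(s)\Pi_Nz\|\le \max\bigl(\|B^N(s)\|,\frac{2K+1}{L_{N,0}}\bigr)(\|\Pi^Nz\|+\|\Pi_Nz\|)$. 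As in \eqref{pulling_out_A_Z2}, this gives
\begin{align}
\|B(s)(DF(w(s))-DF(\overline{w}(s)))\|\le \max\Bigl(\|B^N(s)\|_{\mathcal{B}(X_{j,\nu,con})},\tfrac{2K+1}{L_{N,0}}\Bigr)\,\|DF(w(s))-DF(\overline{w}(s))\|_{\mathcal{B}(X_{j,\nu,con})}.
\end{align}

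Next I would compute the difference of derivatives. Write $w=\overline{w}+v$ with $v=(\eta,\tilde v)$ and $\|v\|_{X_{j,\nu,con}}\le r$, and let $\mathscr{h}=(\zeta,h)$ be a unit direction. The first row of $F$ is $(u-\overline{u},\dot u)_2$. Its derivative is $h\mapsto(h,\dot u)_2$, which does not depend on $w$, so it contributes nothing. (If the reference point is taken along the moving branch instead, it contributes at most $r$; that is where I would place the summand $1\cdot r$ in the statement, so the factor $1$ is there conservatively.) In the second row, $f(\mu,u)=Lu+\mu u+G(u)$, so the derivative in the $\mu$ direction is $\zeta u$. The difference of these terms is $\zeta\tilde v+\eta h$, which has norm at most $2r$ by the Banach algebra property. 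I would absorb one $r$ into the "$1$" term and the other into the $\|q\|$ term, since $|\mu|$ enters $q$ only through this bookkeeping; this is the point I would check carefully. The $u$-derivative part is exactly the computation of Lemma \ref{lem : Z2}: $(DG(u)-DG(\overline{u}))h=(q(s)\tilde v+3\tilde v^2)h$ with $q(s)=-2\gamma\delta+6\overline{u}(s)$. Its norm is at most $\|q(s)\|r+3r^2$, using that $\ell^1_{j,\nu,con}$ is a Banach algebra. This holds because the sup over $s$ of a product is at most the product of the sups, and pointwise in $s$ we are in $\ell^1_{j,\nu}$.

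Adding the pieces gives $\|DF(w)-DF(\overline{w})\|\le r(1+\|q(s)\|_{\ell^1_{j,\nu,con}}+3r)$. Taking the supremum over $s\in[-1,1]$ then yields the claim. The main obstacle is the bookkeeping of the $\mu$-coupling terms in the second row, together with the justification of the splitting of $B(s)$ into $\Pi^N$ and $\Pi_N$ parts in the con-norm. For the latter I would bound $\Pi_N z$ by $z$ in the con-norm so that \eqref{B(s)_norm_estimate} applies directly.
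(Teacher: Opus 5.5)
Your overall route is the paper's. You pull out $\|B(s)\|\le\max\bigl(\|B^N(s)\|_{\mathcal{B}(X_{j,\nu,con})},\frac{2K+1}{L_{N,0}}\bigr)$ as in Lemma \ref{lem : Z2}, then bound $DF(w(s))-DF(\overline{w}(s))$ blockwise with the Banach algebra property. However, the bookkeeping that is supposed to produce the additive $1$ does not work as written. You correctly find that the $\mu$-couplings give $\zeta\tilde v+\eta h$ in the second row. (The paper's matrix keeps only the $\tilde v$ entry in the $\mu$-column and has no $\eta I$ term in the $u$-block.) You then bound this by $2r$ and propose to absorb the second $r$ into $\|q(s)\|_{\ell^1_{j,\nu,con}}r$ ``since $|\mu|$ enters $q$''. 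It does not: $q(s)=-2\gamma\delta+6\overline{u}(s)$ is independent of $\mu$, and $\|q(s)\|r$ is already entirely used to bound $q\tilde v h$. With your estimates the pieces add up to $r(2+\|q(s)\|+3r)$, which is not the claimed $Z_2(r)r$. The parenthetical placing the $1$ in the first row is also a red herring: $(\zeta,h)\mapsto(h,\dot u)_2$ does not depend on $w$ and contributes exactly zero.

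The missing observation is that $X_{j,\nu}$ carries the sum norm. Fix $s$. Then $|\zeta|+\|h\|_{1,\nu}\le 1$ and $|\eta|+\|\tilde v\|_{1,\nu}\le r$, so
\[
|\zeta|\,\|\tilde v\|_{1,\nu}+|\eta|\,\|h\|_{1,\nu}\le\bigl(|\zeta|+\|h\|_{1,\nu}\bigr)\bigl(|\eta|+\|\tilde v\|_{1,\nu}\bigr)\le r .
\]
Thus both cross terms together cost a single $r$. Adding $\|q(s)\|r+3r^2$ for $(q\tilde v+3\tilde v^2)h$ gives exactly $r\bigl(1+\|q(s)\|_{\ell^1_{j,\nu,con}}+3r\bigr)$, and you then take the supremum over $s$. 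This also shows the stated $Z_2$ remains valid once the $\eta h$ term is included.

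The same pointwise-in-$s$ principle resolves your worry about splitting $B(s)$. Let $M$ denote the maximum appearing in $Z_2$. At each fixed $s$, $B(s)$ is block diagonal with respect to $\Pi^N,\Pi_N$, with $\|\Pi_NB(s)\|_{\mathcal{B}(X_{j,\nu})}\le\frac{2K+1}{L_{N,0}}$ (use $|T_k(s)|\le 1$). Hence $\|B(s)z(s)\|_{X_{j,\nu}}\le M\bigl(\|\Pi^Nz(s)\|_{X_{j,\nu}}+\|\Pi_Nz(s)\|_{X_{j,\nu}}\bigr)=M\|z(s)\|_{X_{j,\nu}}$, and you take the supremum afterwards. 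Your fallback of bounding $\|\Pi^Nz\|_{X_{j,\nu,con}}$ and $\|\Pi_Nz\|_{X_{j,\nu,con}}$ separately by $\|z\|_{X_{j,\nu,con}}$ would turn the max into a sum, which is weaker than the statement.
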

\begin{proof}
To begin, since $w(s) \in B_r(\overline{w}(s))$, there exists a $v(s) \in B_r(0)$ such that $w(s) = \overline{w}(s) + v(s)$. Note that $\|v(s)\|_{\ell^1_{j,\nu,con}} \leq r$. The proof now follows similar steps to those used in Lemma \ref{lem : Z2}. That is, we obtain
{\footnotesize\begin{align}
    \|B(s)(DF(w(s)) - DF(\overline{w}(s)))\|_{\mathcal{B}(X_{j,\nu,con})} &\leq \|B(s)\|_{\mathcal{B}(X_{j,\nu,con})}\left\|\begin{bmatrix}
        0 & 0 \\
        -v(s) & \mathbb{q}(s)\mathbb{v}(s) + 3\mathbb{v}(s)^2
    \end{bmatrix}\right\|_{\mathcal{B}(X_{j,\nu,con})} \\
    &\hspace{-5cm}\leq \max\left(\|B^N(s)\|_{\mathcal{B}(X_{j,\nu,con})},\frac{2K+1}{L_{N,0}}\right)\left(\|v(s)\|_{\mathcal{B}(\mathbb{R}_{con},\ell^1_{k,\nu,con})} + \|\mathbb{q}(s)\mathbb{v}(s)
    \|_{\mathcal{B}(\ell^1_{j,\nu,con})} + 3 \|\mathbb{v}^2\|_{\mathcal{B}(\ell^1_{j,\nu,con})}\right) \\
    &\hspace{-4cm}\leq \max\left(\|B^N(s)\|_{\mathcal{B}(X_{j,\nu,con})}, \frac{2K+1}{L_{N,0}}\right)\left(r + \|q(s)\|_{\ell^1_{j,\nu,con}} r + 3r^2\right) \bydef Z_{2}(r)r
\end{align}}
where $\mathbb{q}$ and $\mathbb{v}$ are multiplication operators as in \eqref{def : discrete conv operator} and we used that $\ell^1_{j,\nu,con}$ is a Banach algebra and similar steps to those used in Lemma \ref{lem : Z2} to obtain the estimate on $\|B(s)\|_{\mathcal{B}(X_{j,\nu,con})}$. Note that here we were forced to use the estimate with $2K+1$; however, in practice, the maximum will be $\|B^N(s)\|_{\mathcal{B}(X_{j,\nu,con})}$. As a result, the estimate does not usually factor in. This concludes the proof.
\end{proof}
Finally, we compute the $Z_1$ bound. First, we will define $\overline{v}(s) \in \ell^1_{j,\nu,con}$ as 
\begin{align}
    \overline{v}(s) \bydef -2\gamma \overline{u}(s) + 3\mathcal{F}_{2K}^{-1}[\mathcal{F}_{2K}[\overline{u}(s)].*\mathcal{F}_{2K}[\overline{u}(s)]].
\end{align}
Note that $\overline{u}(s)$ is a polynomial of order $K$ with respect to $s$. Hence, we expect $\overline{u}(s)^2$ to be a polynomial of order $2K$ with respect to $s$, so we choose our FFT size appropriately. We now state the result for $Z_1$ as a lemma.
\begin{lemma}\label{lem : Z1s}
Let $\overline{V}(s) \bydef (\overline{V}_k)_{k \in \{0,\dots,2K\}},\phi(s) \bydef (\phi_k)_{k \in \{0,\dots,2K\}} \in \ell^1_{j,\nu,con}$ where
\begin{align}
    (\overline{V}_k)_n \bydef \begin{cases}
        0 & n = (0,0) \\
        (\overline{v}_k)_n & \mathrm{else}
    \end{cases}, ~~(\phi_k)_n \bydef \begin{cases} \frac{\|\overline{V}_k\|_{\infty}}{\nu^{N+1}} & n \in I^N \\
    0 & \mathrm{else} 
    \end{cases}.
\end{align}
Let $L_{N,0}$ be defined as in \eqref{def : L_NK}. Let $Z_{1} > 0$ be defined as
\begin{align}
    Z_1 \bydef \left\| B^N(s) \begin{bmatrix}
        0 \\
        \phi(s)
    \end{bmatrix}\right\|_{X_{j,\nu,con}} + \frac{1}{L_{N,0}} \|\overline{v}(s)\|_{\ell^1_{j,\nu,con,ineq}}.
\end{align}
Then, it follows that $\sup_{s \in [-1,1]}\|B(s)(DF(\overline{w}(s)) - B^{\dagger}(s))\|_{\mathcal{B}(X_{j,\nu,con})} \le Z_1$. 
\end{lemma}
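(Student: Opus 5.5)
The plan is to mirror the proof of Lemma \ref{lem : Z1}, now with every object depending polynomially on $s$. Fix $s \in [-1,1]$ and take $\mathscr{h}(s) = (\eta(s),h(s)) \in X_{j,\nu,con}$ with $\|\mathscr{h}(s)\|_{X_{j,\nu,con}} \leq 1$. Set $z(s) \bydef (DF(\overline{w}(s)) - B^{\dagger}(s))\mathscr{h}(s)$. I will first identify $z(s)$ explicitly. In the finite block, $B^{\dagger}(s)$ reproduces $\Pi^N DF(\overline{w}(s))\Pi^N$, including the scalar row and the $\mu$-column. In the tail, $B^{\dagger}(s)$ keeps only $(1+|\mathcal{L}\tilde{n}|^2)^2$ and drops $\overline{\mu}(s)$. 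I will treat the $\mu$-contributions that are not cancelled (the tail of $\eta\,\partial_\mu f = \eta \overline{u}$, the term $\overline{\mu}(s)\pi_N h$, and the tail of the phase condition) as small or absent, so that the analysis focuses on the convolution part. With this convention, the second component is $\pi^N z = \overline{v}(s) * \pi_N h(s)$ on $I^N$ and $\pi_N z = \pi_N(\overline{v}(s) * h(s))$ in the tail. The first (scalar) component of $z$ vanishes, since the phase row is kept in $\Pi^N$. I would flag this modelling step explicitly rather than hide it.

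Next, I split $\|B(s) z(s)\|_{X_{j,\nu,con}} \leq \|\Pi^N B(s) z(s)\| + \|\Pi_N B(s) z(s)\|$, following \eqref{split Z1}.

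For the finite part, fix $s$ and apply pointwise the argument of Lemma \ref{lem : Z1} leading to \eqref{maximum_step}. This gives $|(\overline{v}(s) * \pi_N h(s))_n| \leq \nu^{-(N+1)} \|\overline{V}(s)\|_\infty \|h(s)\|_{1,\nu}$. I then need to pass from $\|\overline{V}(s)\|_\infty$ to the Chebyshev coefficients by $\|\overline{V}(s)\|_\infty \leq \|\overline{V}_0\|_\infty + 2\sum_k \|\overline{V}_k\|_\infty |T_k(s)|$, so that componentwise $|\pi^N z(s)| \leq \phi(s)$ in the sense of the $\mathbb{R}_{con,ineq}$-type majorant. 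This is exactly why $\phi$ is defined coefficientwise with $k$ up to $2K$, matching the polynomial degree of $\overline{v}(s)$. Then $\|B^N(s)\pi^N z(s)\| \leq \|B^N(s)[0;\phi(s)]\|_{X_{j,\nu,con}}$. Strictly, this requires a monotonicity property of $|B^N|$, so I would state it with $|B^N(s)|$ entrywise, following the paper's convention in Lemma \ref{lem : Z1}. The product $B^N(s)\phi(s)$, of degree $3K$, is evaluated with $\mathcal{F}_{3K}$ as in Lemma \ref{lem : Y0s}.

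For the tail, I use \eqref{tail ineq estimate} instead of the cruder \eqref{B(s)_norm_estimate}, which gives $\|\Pi_N B(s) z(s)\| \leq L_{N,0}^{-1}\|z(s)\|_{ineq}$. I then bound $\|\pi_N(\overline{v}(s)*h(s))\|$ by the Banach algebra property in $\ell^1_{j,\nu,con}$, namely $\|\overline{v}(s)\|_{\ell^1_{j,\nu,con,ineq}}\|h(s)\| \leq \|\overline{v}(s)\|_{\ell^1_{j,\nu,con,ineq}}$. Summing the finite and tail parts and taking the supremum over $s$ gives $Z_1$.

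The main obstacle is the uniformity in $s$ of the finite-part bound. The majorant $\phi(s)$ must dominate $|\pi^N z(s)|$ for every $s$, yet it is built from Chebyshev coefficients of $\overline{V}$. I would handle this by comparing coefficientwise through the triangle inequality on $T_k$. The secondary delicate point is the bookkeeping of the $\mu$-dependent tail terms dropped from $B^{\dagger}$, which the stated $Z_1$ implicitly omits.
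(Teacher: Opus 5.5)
Your plan follows the paper's proof step for step. The paper also computes $(DF(\overline{w}(s))-B^{\dagger}(s))\mathscr{h}(s)=\begin{bmatrix}0\\ z(s)\end{bmatrix}$ and splits off $B^N(s)$ and $\Pi_N B(s)$. It bounds the finite part by the majorant $\phi(s)$ inherited from Lemma \ref{lem : Z1}, and the tail by \eqref{tail ineq estimate} plus the Banach algebra property.

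\textbf{The modelling step you flag.} Of the three terms you list, two are not merely ``small'' but identically zero, and the paper says why. Since $\pi^N\overline{u}(s)=\overline{u}(s)$, the term $\eta(s)\overline{u}(s)$ has no tail. Since $\dot{u}(s)\pi^N=\dot{u}(s)$, the phase row cancels exactly. The third term, $\overline{\mu}(s)\pi_N h(s)$, is genuinely uncancelled, because $B^{\dagger}(s)$ omits $\overline{\mu}$ in the tail. The paper's proof drops it without comment as well. So your flag is justified, but assuming the term away does not prove the inequality as stated. The honest fix is to add $\frac{1}{L_{N,0}}\|\overline{\mu}(s)\|_{\mathbb{R}_{con,ineq}}$ to $Z_1$; equivalently, use $\overline{v}(s)+\overline{\mu}(s)\delta$ in the tail term only. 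The finite block is unaffected, since $\Pi^N DF(\overline{w}(s))\Pi^N$ reproduces $\overline{\mu}(s)\pi^N h(s)$ exactly. Numerically the extra term is harmless because $L_{N,0}$ is large.

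\textbf{A second leap in the finite part, also shared with the paper.} Your pointwise majorant $\phi_0+2\sum_k\phi_k|T_k(s)|$ is not the Chebyshev series $\phi(s)=\phi_0+2\sum_k\phi_kT_k(s)$. The latter can be much smaller, even zero, at some $s$, because the $T_k(s)$ change sign. So $|\pi^N z(s)|\le\phi(s)$ need not hold. It follows that $\|B^N(s)\begin{bmatrix}0\\ \phi(s)\end{bmatrix}\|_{X_{j,\nu,con}}$, computed as a Chebyshev product via $\mathcal{F}_{3K}$, is not justified as an upper bound. What your argument actually gives, at each $s$ and with entrywise absolute values, is
$\big\|\big(|B^N_0|+2\sum_i|B^N_i|\big)\begin{bmatrix}0\\ \hat{\phi}\end{bmatrix}\big\|_{X_{j,\nu}}$ with $\hat{\phi}\bydef\phi_0+2\sum_k\phi_k$.

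\textbf{The tail.} Invoking \eqref{tail ineq estimate} requires the Chebyshev-coefficient norm of $\overline{v}(s)h(s)$, and that norm is not controlled by $\|\mathscr{h}(s)\|_{X_{j,\nu,con}}\le 1$. It is cleaner to work at fixed $s$. Every $B_{grid,p}$ has the same diagonal tail, so the tail of $B(s)$ is constant in $s$. Hence
$\|\Pi_N B(s)\begin{bmatrix}0\\ z(s)\end{bmatrix}\|_{X_{j,\nu}}\le L_{N,0}^{-1}\|\overline{v}(s)\|_{1,\nu}\le L_{N,0}^{-1}\|\overline{v}(s)\|_{\ell^1_{j,\nu,con,ineq}}$, plus the $\overline{\mu}$ term above.
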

\begin{proof}
To begin, let $\mathscr{h}(s) \bydef (\eta(s),h(s)) \in X_{j,\nu,con}$. Observe that
\begin{align}
    (DF(\overline{w}(s)) - B^{\dagger}(s))\mathscr{h}(s) &= \begin{bmatrix}
        (h(s),\dot{u}(s))_2 \\
        \eta(s)\overline{u}(s) + Df(\overline{w}(s))h(s)
    \end{bmatrix} - \begin{bmatrix}
        (h(s),\dot{u}(s)\pi^N)_2 \\
        \eta(s) \pi^N \overline{u}(s) + \pi^N Df(\overline{w}(s))\pi^N h(s)
    \end{bmatrix} \\
    &= \begin{bmatrix}
        (h(s),\dot{u}(s) - \dot{u}(s) \pi^N)_2 \\
        \eta(s)(\overline{u}(s) - \pi^N \overline{u}(s)) + (Df(\overline{w}(s)) - \pi^N Df(\overline{w}(s))\pi^N)h(s) 
    \end{bmatrix} \\
    &= \begin{bmatrix}
        0 \\
        (Df(\overline{w}(s)) - \pi^N Df(\overline{w}(s))\pi^N)h(s)
    \end{bmatrix}
\end{align}
where the last step followed from the fact that $\pi^N \overline{u}(s) = \overline{u}(s), \dot{u}(s) \pi^N = \dot{u}(s)$ by definition. Hence, we introduce $z(s) \bydef (Df(\overline{w}(s)) - \pi^N Df(\overline{w}(s))\pi^N)h(s)$ and write
\begin{align}
    \|B(s)(DF(\overline{w}(s)) - B^{\dagger}(s))\|_{\mathcal{B}(X_{j,\nu,con})} &= \left\| B(s) \begin{bmatrix}
        0 \\
        z(s)
    \end{bmatrix}\right\|_{X_{j,\nu,con}} \\
    &=\left\| B^N(s) \begin{bmatrix}
        0 \\
        z(s)
    \end{bmatrix}\right\|_{X_{j,\nu,con}} + \left\|\Pi_N B(s) \begin{bmatrix}
        0 \\
        z(s)
    \end{bmatrix}\right\|_{X_{j,\nu,con}} \\
    &\hspace{-3.1cm}\leq \left\| B^N(s) \begin{bmatrix}
        0 \\
        \overline{v}(s) \pi_N h(s)
    \end{bmatrix}\right\|_{X_{j,\nu,con}} + \|\Pi_N B(s)\|_{\mathcal{B}(X_{j,\nu,con})} \left\| \Pi_N \begin{bmatrix}
        0 \\ z(s)
    \end{bmatrix}\right\|_{X_{j,\nu,con,ineq}} \\
    &\leq \left\| B^N(s) \begin{bmatrix}
        0 \\
        \phi(s)
    \end{bmatrix}\right\|_{X_{j,\nu,con}} + \frac{1}{L_{N,0}} \|\overline{v}(s)\|_{\ell^1_{j,\nu,con,ineq}} \bydef Z_1
\end{align}
where we used \eqref{tail ineq estimate} since $z(s)$ is computable and we would use it anyway. This concludes the proof.
\end{proof}
This concludes the computation of the bounds needed to apply Theorem \ref{th: radii polynomial continuation}. We are now able to prove the existence of symmetric branches of solutions periodic on a triangle and hexagon.
\subsection{Hexagonal and Triangular Branches of Periodic Solutions}\label{sec : results branches}
In this section, we present the proofs of five branches of solutions in \eqref{eq : swift_hohenberg}. The first two will possess $D_3$-symmetry, meaning each periodic pattern on the branch can be generated by tiling with $\Delta_1$ and $\Delta_2$ according to Theorem \ref{th : triangle periodic}. The latter three will possess $D_6$-symmetry, meaning each solution on the branch is periodic on $\varhexagon_0$ according to Theorem \ref{th : hexagon periodic}. We now present the results.
\begin{theorem}[\bf The First Triangular Branch]\label{th : triangle branch 1}
Let $s_{fix} = 0.05, \gamma = 1.6, j = 3, \nu = 1.1$. Moreover, let $r_0 \bydef 2 \times 10^{-5}, $. Then there exists a unique solution $\tilde{w}(s)$ to \eqref{eq : psuedo arclength system} in $\overline{B_{r_0}(\overline{w}(s))} \subset X_{3,1.1,con}$ and we have that $\sup_{s \in [-1,1]} \|\tilde{w}(s)-\overline{w}(s)\|_{X_{3,1.1,con}} \leq r_0$. This corresponds to a branch of periodic solutions in \eqref{eq : swift_hohenberg} under which the pattern can be generated by a tiling of $\Delta_1$ and $\Delta_2$.     
\end{theorem}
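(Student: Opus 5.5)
The proof will be computer-assisted: apply Theorem \ref{th: radii polynomial continuation} with the bounds of Lemmas \ref{lem : Y0s}, \ref{lem : Z2s} and \ref{lem : Z1s}. Then transfer symmetry and tiling to every point of the branch via Lemma \ref{lem : G-fourier series} and Theorem \ref{th : triangle periodic}.

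\textbf{Step 1: the approximate branch.} Starting from a $D_3$-symmetric approximate solution at $\gamma = 1.6$ (for instance the one underlying Theorem \ref{th : 1_1}, with the same choice of $d$), I compute the tangent vector $\dot{w}$ by solving the bordered linear system. I then run numerical pseudo-arclength continuation over total arclength $s_{fix}=0.05$, with step sizes given by the Chebyshev grid $s_{grid,p}$ of Section \ref{sec : continuation construction}, and refine each grid point by Newton on the augmented system \eqref{eq : psuedo arclength system}. All of this is done in the $D_3$ sequence structure of \cite{dominic_dihedral_julia}, with indices restricted to $I^N\subset\mathcal{Z}_{\mathrm{red}}(D_3)$.

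Applying $\mathcal{T}^{-1}$ to the grid gives the Chebyshev coefficients $\overline{w}_k=(\overline{\mu}_k,\overline{u}_k)$, $k\le K$. In the same way I form $B^N_{grid,p}\approx(\Pi^N DF(\overline{w}_{grid,p})\Pi^N)^{-1}$ and interpolate to $B^N(s)$. I choose $K$ so that the Chebyshev coefficients of $\overline{w}$ and $B^N$ decay to near machine precision; otherwise $Y_0$ and $Z_0$ will not be small.

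\textbf{Step 2: the bounds and the radii polynomial.} With interval arithmetic \cite{julia_interval}, I evaluate $Y_0$ and $Z_0$ by FFTs of sizes $4K$, $3K$ and $2K$ as in Lemma \ref{lem : Y0s}. I evaluate $Z_1$ through $\overline{V}(s)$ and $\phi(s)$ as in Lemma \ref{lem : Z1s}, and $Z_2(r_0)$ as in Lemma \ref{lem : Z2s}, using $L_{N,0}$ from \eqref{def : L_NK} with $\nu=1.1$. I then check \eqref{condition radii polynomial continuation} at $r=r_0=2\times10^{-5}$. Theorem \ref{th: radii polynomial continuation} then yields, for each $s\in[-1,1]$, a unique $\tilde{w}(s)$ in the ball, depending smoothly on $s$, with $F(\tilde{w}(s))=0$. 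In particular $f(\tilde{\mu}(s),\tilde{u}(s))=0$, so each $\tilde u(s)$ is a steady state of \eqref{original SH}.

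\textbf{Step 3: symmetry and tiling.} The fixed point lies in $X_{3,1.1,con}$, which is built on $\ell^1_{3,\nu}$. Hence each $\tilde{u}(s)$ is a $D_3$-Fourier series in the sense of Corollary \ref{corr : reduced_set}, and is therefore $D_3$-symmetric by Lemma \ref{lem : G-fourier series}. The weight $\nu>1$ gives analyticity, so $\tilde u(s)$ is a genuine smooth periodic function on $\parallelogram_0$. Theorem \ref{th : triangle periodic} then shows that the pattern is generated by translates of $\Delta_1$ and $\Delta_2$, uniformly along the branch.

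\textbf{Main obstacle.} I expect the difficulty to be making $Z_1+Z_0+Z_2(r_0)r_0<1$ hold uniformly in $s$. $\|B^N(s)\|$ can be large (compare the value $92.86$ in Theorem \ref{th : 1_1}), and it enters both $Z_2$ and, through $\phi$, $Z_1$. The crude uniform $Z_1$ estimate may also be too loose. If the check fails, I would first increase $N$ or reduce $s_{fix}$ by splitting the branch into subintervals. If that is not enough, I would return to the sharper pointwise-in-$n$ estimate \eqref{gabriel_stopping_point} in place of the uniform $\|\overline{V}\|_\infty$ bound.
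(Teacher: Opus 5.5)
Your proposal is correct and follows the paper's proof: build $\overline{w}(s)$ and $B^N(s)$ as in Section~\ref{sec : continuation construction}, rigorously evaluate the bounds of Lemmas~\ref{lem : Y0s}, \ref{lem : Z2s} and \ref{lem : Z1s}, check \eqref{condition radii polynomial continuation} at $r_0=2\times10^{-5}$, and read off the $D_3$ symmetry and the $\Delta_1,\Delta_2$ tiling from the sequence space and Theorem~\ref{th : triangle periodic}. The differences are only in the concrete choices: the paper takes $N=40$, $d=5$ and just $K=3$, so machine-precision Chebyshev decay is not needed, and obtains $Y_0\approx 7.15\times10^{-6}$, $Z_0\approx 0.027$, $Z_1\approx 0.474$, $Z_2(r_0)\approx 1.12\times10^{4}$, hence $Z_0+Z_1+Z_2(r_0)r_0\approx 0.72<1$; its branch lies near $\mu\approx-0.25$ to $-0.3$, rather than being continued from the $d=10$, $\mu=0.01$ solution of Theorem~\ref{th : 1_1}.
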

\begin{proof}
Choose $N = 40, d = 5, K = 3$. Then, we perform the full construction described in Section \ref{sec : continuation construction} to build $\overline{w}(s)$ and $B^N(s)$. Using \cite{julia_blanco_D3D6,dominic_dihedral_julia}, we choose $r_0 \bydef 2 \times 10^{-5} $ and obtain
{\small\begin{align}
    \|B^N(s)\|_{\mathcal{B}(X_{3,1.1,con})} \leq  113.916\text{,}~Y_0 \bydef 7.152 \times 10^{-6} \text{,}~Z_{2}(r_0) \bydef 11174.51   \text{,}~Z_1 \bydef 0.4737,~Z_0 \bydef 0.02671.
    \end{align}}
We prove that these values satisfy Theorem \ref{th: radii polynomial continuation}. 
\end{proof}
\begin{figure}[H]
\centering
 \begin{minipage}{.33\linewidth}
  \centering\epsfig{figure=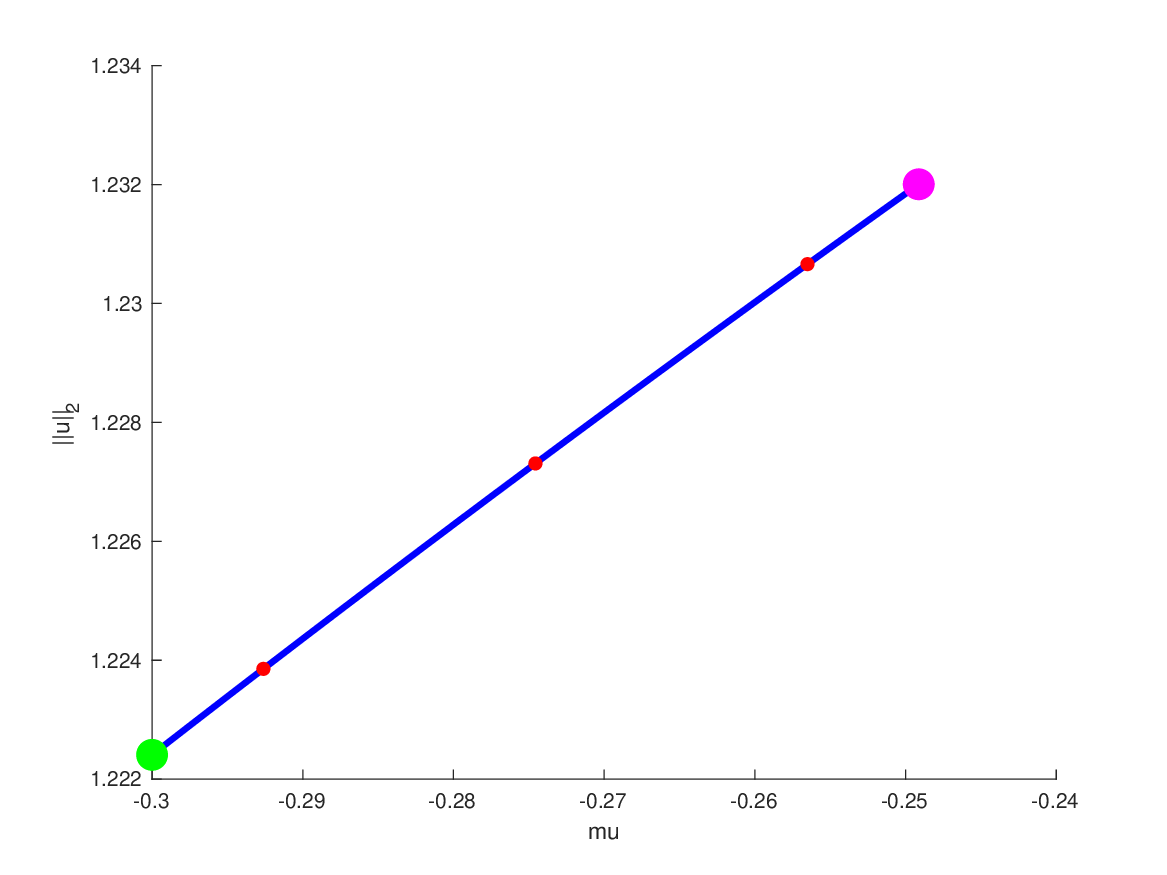,width=\linewidth}
  \end{minipage}%
 \begin{minipage}{.33\linewidth}
  \centering\epsfig{figure=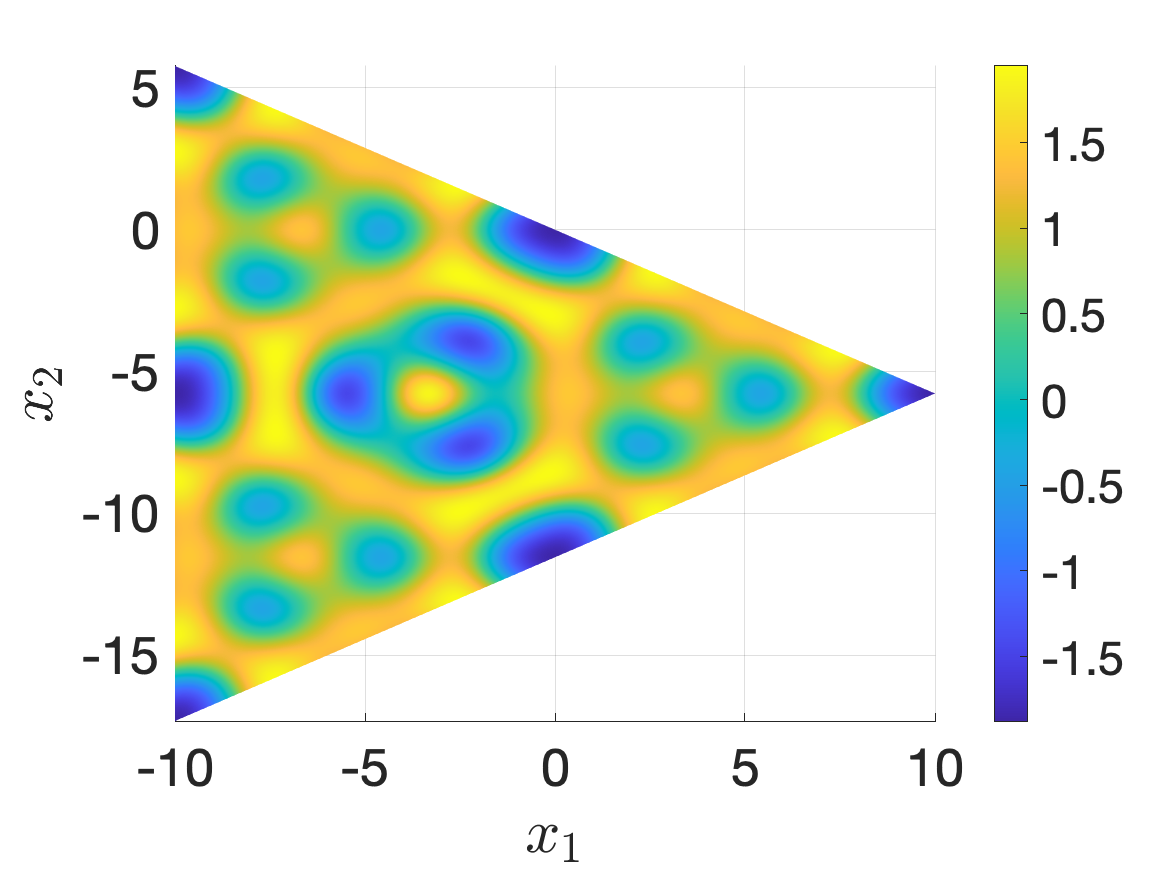,width=\linewidth}
 \end{minipage} 
\begin{minipage}{.33\linewidth}
  \centering\epsfig{figure=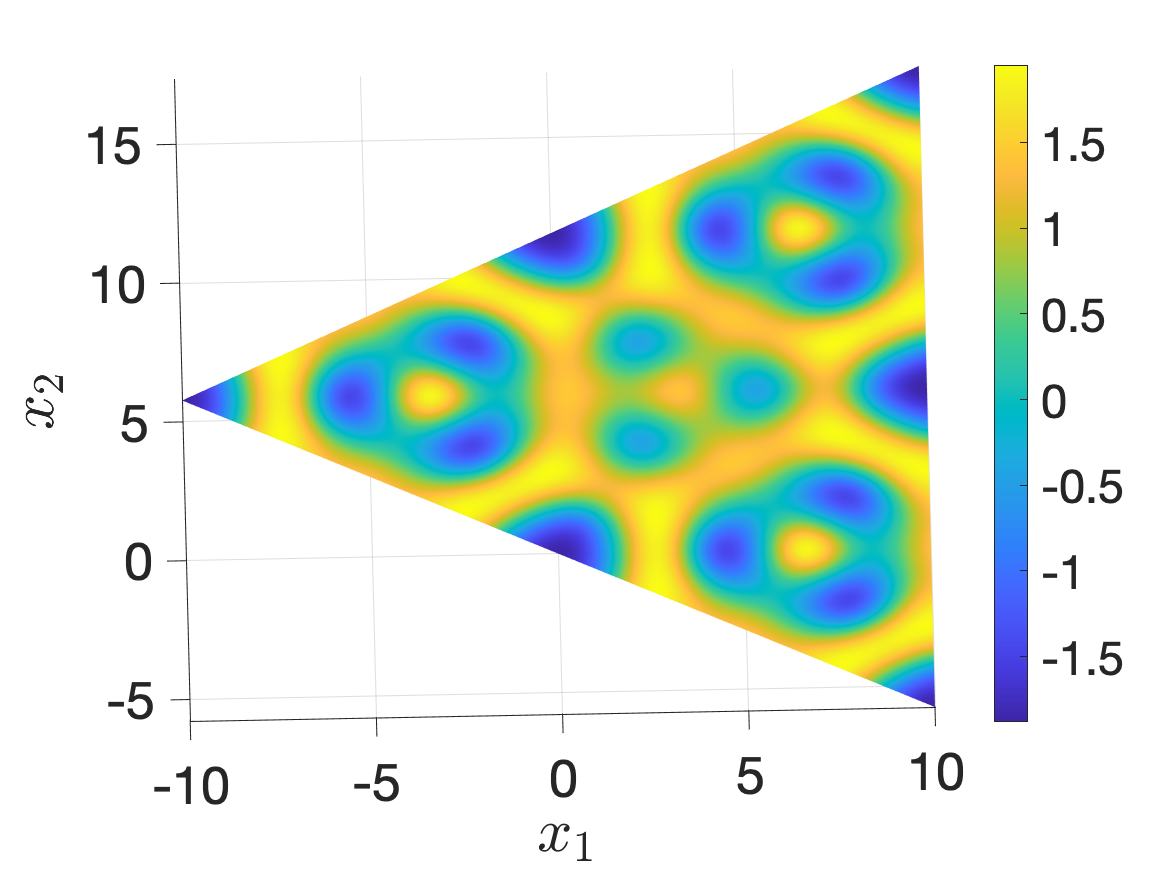,width=\linewidth}
 \end{minipage}
 \caption{Plot of the branch of approximate solutions proven in Swift Hohenberg with $D_3$-symmetry (left) used in  Theorem \ref{th : triangle branch 1}. The approximation on the branch (magenta point) when $\mu \approx -0.24913$ on $\Delta_1$ (center) and $\Delta_2$ (right). Note that the green point is plotted in Figure \ref{fig : intro} (center) and (right).}\label{fig : th3_1}
 \end{figure}%
\begin{theorem}[\bf The Second Triangular Branch]\label{th : triangle branch 2}
Let $s_{fix} = 0.5, \gamma = 0.3, j=3, \nu = 1.4$. Moreover, let $r_0 \bydef 2 \times 10^{-6}, $. Then there exists a unique solution $\tilde{w}(s)$ to \eqref{eq : psuedo arclength system} in $\overline{B_{r_0}(\overline{w}(s))} \subset X_{3,1.4,con}$ and we have that $\sup_{s \in [-1,1]} \|\tilde{w}(s)-\overline{w}(s)\|_{X_{3,1.4,con}} \leq r_0$. This corresponds to a branch of periodic solutions in \eqref{eq : swift_hohenberg} under which the pattern can be generated by a tiling of $\Delta_1$ and $\Delta_2$.     
\end{theorem}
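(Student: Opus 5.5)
The proof is a direct application of Theorem \ref{th: radii polynomial continuation}, organized as in the proof of Theorem \ref{th : triangle branch 1}. First, fix the discretization parameters: $N$, $d$ (I would start with $d = 5$ as before) and a small number of Chebyshev coefficients $K$. Since $\nu = 1.4$ is larger than before and $\gamma = 0.3$ gives a weaker quadratic term, I expect the Fourier coefficients of $\overline{u}$ to decay fast enough that $N = 40$, or a bit smaller, suffices. Then perform the construction of Section \ref{sec : continuation construction}:
\begin{itemize}
\item obtain a $D_3$-symmetric approximate solution at some starting $\mu$ by the randomized Newton procedure of Section \ref{sec : numerical aspects}, using the $D_3$ sequence structure of \cite{dominic_dihedral_julia};
\item run numerical pseudo-arclength continuation over the Chebyshev grid $s_{grid,p}$ with $s_{fix} = 0.5$;
\item recover $\overline{w}(s)$ via $\mathcal{T}^{-1}$;
\item build $B^\dagger_{grid,p}$ and $B^N_{grid,p}$ at each grid point and interpolate them into $B^\dagger(s)$ and $B^N(s)$.
\end{itemize}

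Next, evaluate the bounds rigorously with interval arithmetic \cite{julia_interval,julia_olivier}:
\begin{itemize}
\item $Y_0$ and $Z_0$ from Lemma \ref{lem : Y0s}, using FFTs of sizes $4K$, $3K$ and $2K$;
\item $Z_1$ from Lemma \ref{lem : Z1s}, through $\overline{v}(s)$, $\overline{V}(s)$ and $\phi(s)$, with the tail controlled by $1/L_{N,0}$;
\item $Z_2(r_0)$ from Lemma \ref{lem : Z2s}, with $r_0 = 2\times 10^{-6}$.
\end{itemize}
Then check the two inequalities \eqref{condition radii polynomial continuation}. This yields, for every $s\in[-1,1]$, a unique zero $\tilde w(s)$ of $F$ in $\overline{B_{r_0}(\overline w(s))}$, with smooth dependence on $s$.

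To obtain the symmetry and tiling claims, note that the whole contraction takes place in $X_{3,1.4,con}$, whose Fourier part is indexed on $\mathcal{Z}_{\mathrm{red}}(D_3)$. Each $\tilde u(s)$ is therefore a $D_3$-Fourier series, so Lemma \ref{lem : G-fourier series} gives $D_3$-invariance. Theorem \ref{th : triangle periodic} then shows that each pattern on the branch is generated by tiling with $\Delta_1$ and $\Delta_2$. The second component of $F$ vanishing means $f(\tilde\mu(s),\tilde u(s)) = 0$, so each point is a steady state of \eqref{eq : swift_hohenberg}.

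The main obstacle is getting $Z_0 + Z_1 + Z_2(r_0) r_0 < 1$ over the long arclength $s_{fix} = 0.5$, ten times that of the first branch. Two effects work against this. With only a few Chebyshev modes, $B^N(s)$ may invert $\Pi^N DF(\overline w(s))\Pi^N$ poorly between grid points, making $Z_0$ large. The estimate $\|\Pi_N B(s)\|\le (2K+1)/L_{N,0}$ also grows with $K$. If the check fails, I would first increase $K$, and the FFT size with it, to reduce $Z_0$ and $Y_0$, relying on the $ineq$ estimate \eqref{tail ineq estimate} for the tail. If that is insufficient, I would split the arclength into subintervals and prove each piece, noting that the overlapping endpoints glue into one branch by local uniqueness.
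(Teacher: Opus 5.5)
Your proposal follows the paper's proof: it applies Theorem \ref{th: radii polynomial continuation} with the bounds of Lemmas \ref{lem : Y0s}, \ref{lem : Z1s} and \ref{lem : Z2s} evaluated rigorously on the computer, and the $D_3$ symmetry and tiling come from working in the reduced index space together with Theorem \ref{th : triangle periodic}. The only difference is the concrete discretization: the paper takes $N = 24$, $d = 5$ and a fairly large $K = 31$ (not a small $K$), and obtains $Y_0 = 1.789\times 10^{-7}$, $Z_0 = 8.063\times 10^{-3}$, $Z_1 = 0.102$ and $Z_2(r_0) = 257070.1$, so $Z_0 + Z_1 + Z_2(r_0)\, r_0 \approx 0.62 < 1$ and the whole arclength is covered without subdivision.
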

\begin{proof}
Choose $N = 24, d = 5, K = 31$. Then, we perform the full construction described in Section \ref{sec : continuation construction} to build $\overline{w}(s)$ and $B^N(s)$. Using \cite{julia_blanco_D3D6,dominic_dihedral_julia}, we choose $r_0 \bydef 2 \times 10^{-6} $ and obtain
{\small\begin{align}
    \|B^N(s)\|_{\mathcal{B}(X_{3,1.4,con})} \leq  3241.65\text{,}~Y_0 \bydef 1.789 \times 10^{-7} \text{,}~Z_{2}(r_0) \bydef 257070.1   \text{,}~Z_1 \bydef 0.102,~Z_0 \bydef 8.063 \times 10^{-3}.
    \end{align}}
We prove that these values satisfy Theorem \ref{th: radii polynomial continuation}. 
\end{proof}
\begin{figure}[H]
\centering
 \begin{minipage}{.33\linewidth}
  \centering\epsfig{figure=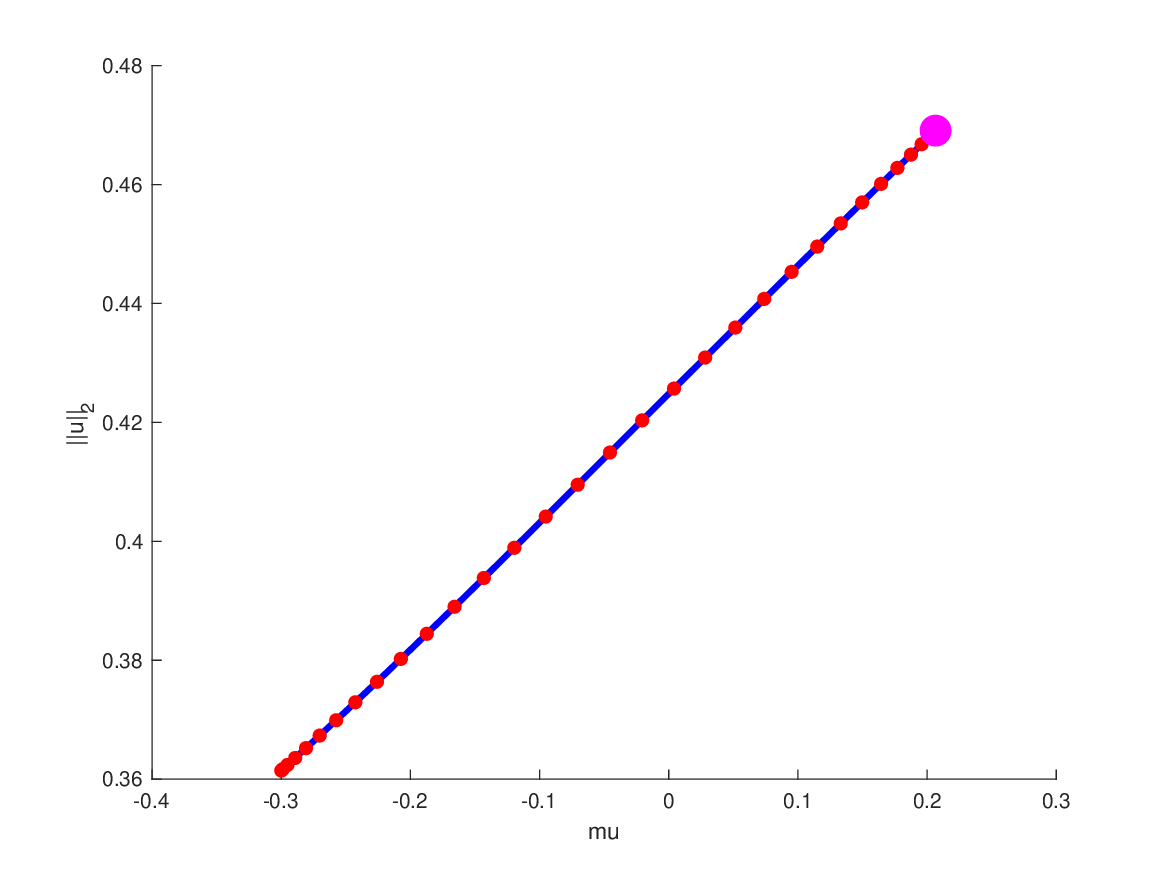,width=\linewidth}
  \end{minipage}%
 \begin{minipage}{.33\linewidth}
  \centering\epsfig{figure=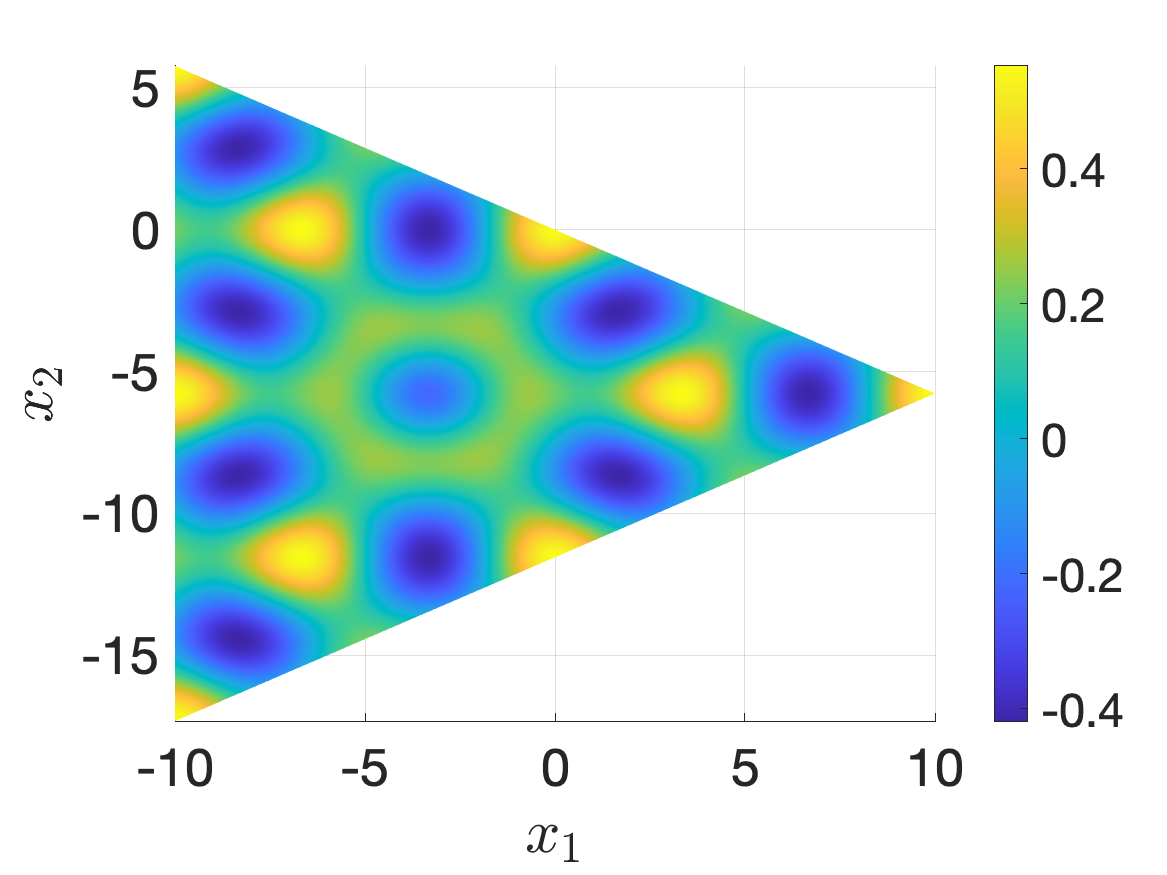,width=\linewidth}
 \end{minipage} 
 \begin{minipage}{.33\linewidth}
  \centering\epsfig{figure=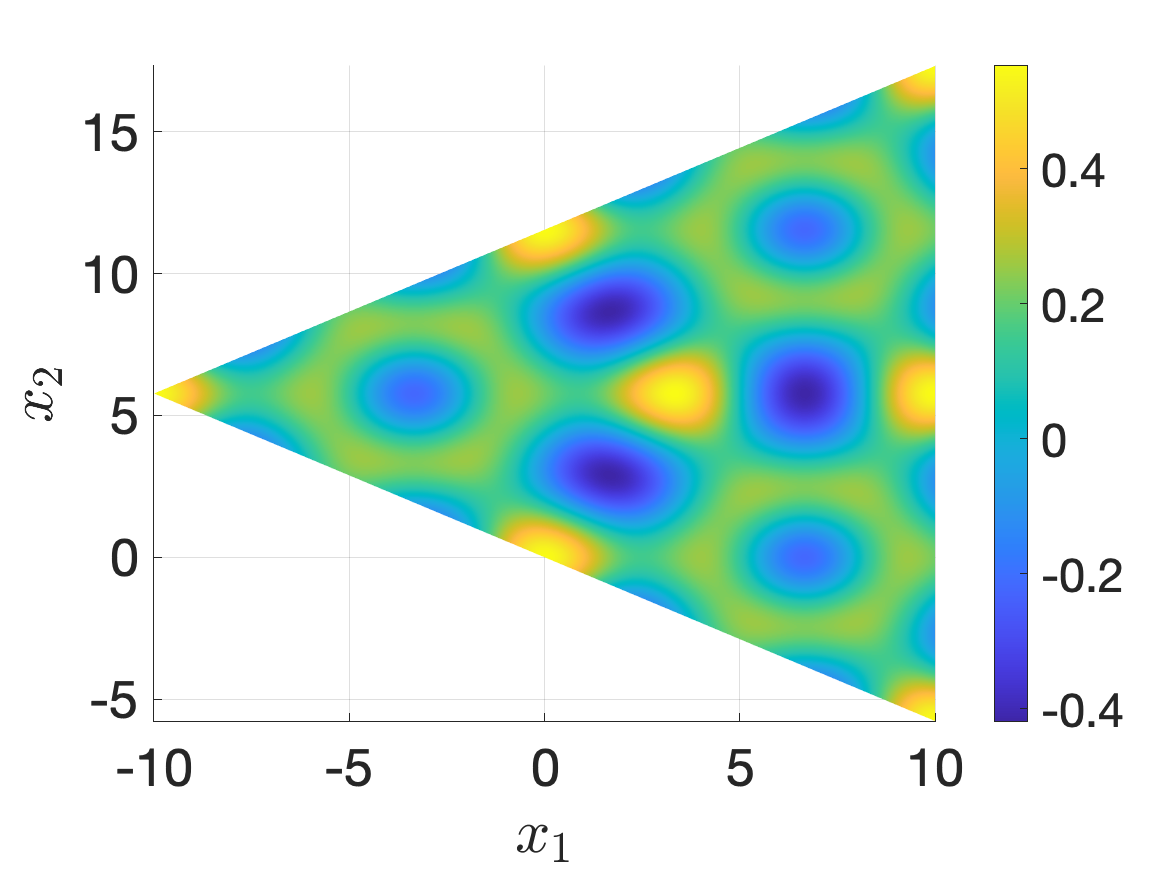,width=\linewidth}
 \end{minipage}
 \caption{Plot of the branch of approximate solutions proven in Swift Hohenberg with $D_3$-symmetry (left) used in  Theorem \ref{th : triangle branch 2}. The approximation on the branch (magenta point) when $\mu \approx 0.206565$ on $\Delta_1$ (center) and $\Delta_2$ (right).}\label{fig : th3_2}
 \end{figure}%
We now present the proof of the hexagonal branch of solutions.
\begin{theorem}[\bf The First Hexagonal Branch]\label{th : hexagon branch}
Let $s_{fix} = 0.4, \gamma = 1.6, j = 6, \nu = 1.1$. Moreover, let $r_0 \bydef, 1 \times 10^{-4}$. Then there exists a unique solution $\tilde{w}(s)$ to \eqref{eq : psuedo arclength system} in $\overline{B_{r_0}(\overline{w}(s))} \subset X_{6,1.1,con}$ and we have that $\sup_{s \in [-1,1]} \|\tilde{w}(s)-\overline{w}(s)\|_{X_{6,1.1,con}} \leq r_0$. This corresponds to a branch of periodic solutions on $\varhexagon_0$ to \eqref{eq : swift_hohenberg}. 
\end{theorem}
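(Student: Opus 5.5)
The proof follows the same template as Theorems \ref{th : triangle branch 1} and \ref{th : triangle branch 2}, now using the $D_6$ sequence structures of \cite{dominic_dihedral_julia}. First I fix the discretization parameters. A natural first choice is $N = 40$, $d = 5$ and a moderate number of Chebyshev coefficients such as $K = 7$; $K$ can be increased if the $Z_0$ bound is too large. I then carry out the construction of Section \ref{sec : continuation construction}. Starting from a $D_6$-symmetric approximate zero obtained by the Newton procedure of Section \ref{sec : numerical aspects}, I perform pseudo-arclength continuation with step sizes given by the Chebyshev grid $s_{grid,p}$ over total arclength $s_{fix} = 0.4$. At each grid point I compute the tangent vector $\dot w_{grid,p}$, and I recover the Chebyshev coefficients of $\overline{w}(s)$ via $\mathcal{T}^{-1}$ (the IFFT). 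In the same way I build the grid matrices $B^N_{grid,p} \approx (\Pi^N DF(\overline{w}_{grid,p})\Pi^N)^{-1}$ and interpolate them into $B^N(s)$. Since all objects live in $X_{6,1.1,con}$, the symmetry is enforced automatically along the whole branch.

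Next I evaluate the four bounds rigorously with interval arithmetic, using Lemmas \ref{lem : Y0s}, \ref{lem : Z2s} and \ref{lem : Z1s}.
\begin{itemize}
\item $Y_0$ uses FFTs of size $4K$ for the finite part and $3K$ for the tail $(\pi^{3N}-\pi^N)G$, divided by $L_{N,0}$.
\item $Z_0$ uses FFT size $2K$ for the product $B^N(s)\Pi^N DF(\overline{w}(s))\Pi^N$.
\item $Z_1$ requires $\overline{v}(s)$, obtained by a $2K$ FFT, its truncation $\overline{V}$, the uniform sequence $\phi(s)$, and the $\ell^1_{6,1.1,con,ineq}$ norm of $\overline{v}(s)$.
\item $Z_2(r_0)$ requires $\|B^N(s)\|_{\mathcal{B}(X_{6,1.1,con})}$, $(2K+1)/L_{N,0}$ and $\|q(s)\|$.
\end{itemize}
With $r_0 = 10^{-4}$, I then check the two inequalities \eqref{condition radii polynomial continuation}. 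Theorem \ref{th: radii polynomial continuation} then gives, for each $s\in[-1,1]$, a unique zero $\tilde w(s)$ of $F$ in the ball, depending smoothly on $s$. The second component of $F$ shows that each $\tilde u(s)$ is a steady state of \eqref{eq : swift_hohenberg} at parameter $\tilde\mu(s)$. Because $\tilde u(s)\in\ell^1_{6,1.1}$, Lemma \ref{lem : G-fourier series} gives $D_6$-symmetry, and Theorem \ref{th : hexagon periodic} then yields periodicity on $\varhexagon_0$.

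I expect the main obstacle to be making $Z_0 + Z_1 + Z_2(r_0)r_0 < 1$ hold over a fairly long arclength such as $0.4$.

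\begin{itemize}
\item \emph{$Z_0$.} This term measures how well the Chebyshev interpolant $B^N(s)$ inverts $\Pi^N DF(\overline{w}(s))\Pi^N$ between grid points, so it degrades if $K$ is too small relative to the curvature of the branch. I would address this first by increasing $K$, and if needed by shortening $s_{fix}$ or splitting the branch into subintervals each proven separately.
\item \emph{$Z_1$.} This term needs a large enough $N$, or a smaller $\nu$, since $\phi$ carries the factor $\nu^{-(N+1)}$.
\item \emph{$Z_2(r_0)r_0$.} This term is large whenever $\|B^N(s)\|$ is large. That happens near folds, so I would keep $r_0$ small enough but above the size forced by $Y_0$.
\end{itemize}
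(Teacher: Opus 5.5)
Your outline follows the paper's method step for step: pseudo-arclength continuation on the Chebyshev grid to build $\overline{w}(s)$ and $B^N(s)$, rigorous evaluation of $Y_0,Z_0,Z_1,Z_2$ via Lemmas \ref{lem : Y0s}, \ref{lem : Z1s}, \ref{lem : Z2s}, then Theorem \ref{th: radii polynomial continuation}, and finally $D_6$-symmetry and periodicity on $\varhexagon_0$ from Lemma \ref{lem : G-fourier series} and Theorem \ref{th : hexagon periodic}. What it leaves out is the one step that actually proves this statement: exhibiting a discretization for which \eqref{condition radii polynomial continuation} holds at the prescribed $r_0 = 10^{-4}$. For a computer-assisted result, that verification is the proof, and it cannot be inferred from the template. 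The paper takes $N = 60$, $d = 10$, $K = 7$ (your $K$ matches) and obtains $\|B^N(s)\|_{\mathcal{B}(X_{6,1.1,con})} \le 45.28$, $Y_0 = 4.999\times 10^{-5}$, $Z_0 = 1.005\times 10^{-2}$, $Z_1 = 0.2327$ and $Z_2(r_0) = 3957.77$. These give $Z_0 + Z_1 + Z_2(r_0)r_0 \approx 0.639 < 1$ and $\frac{1}{2}Z_2(r_0)r_0^2 - (1-Z_0-Z_1)r_0 + Y_0 \approx -5.9\times 10^{-6} < 0$. The margin is thin, since $Y_0$ is already about $r_0/2$. So whether your choice $N = 40$, $d = 5$ would pass is genuinely open.

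Your choice of $d$ matters beyond numerics. It is not a discretization parameter: it fixes the domain of periodicity $\mathcal{L}^{-T}(-d,d)^2$ and hence $\varhexagon_0$ itself. With $d = 5$ you would be continuing a different family of patterns on a hexagon of half the linear size, not the $d = 10$ branch the paper validates. Only $N$, $K$ and the starting solution are genuinely free.

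Your fallbacks are also unavailable. Shortening $s_{fix}$, splitting the branch into separately proven pieces, or retuning $r_0$ or $\nu$ would all change the statement, since $s_{fix} = 0.4$, $\nu = 1.1$ and $r_0 = 10^{-4}$ are part of it.

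Finally, the claim that $\|B^N(s)\|$ becomes large near folds is inaccurate. The bordered map $F$ in \eqref{eq : psuedo arclength system} is built precisely so that $DF$ stays invertible at regular folds; this is how the paper passes the folds in Theorems \ref{th : hexagon branch 2} and \ref{th : hexagon branch 3}. In the paper's run, the dominant term $Z_2(r_0)r_0 \approx 0.40$ comes mostly from the factor $1 + \|q(s)\|_{\ell^1_{6,1.1,con}} + 3r_0$ (note $Z_2(r_0)/45.28 \approx 87$). That is, it reflects the weighted size of $\overline{u}(s)$ rather than $\|B^N(s)\|$.
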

\begin{proof}
Choose $N = 60, d = 10, K = 7$. Then, we perform the full construction described in Section \ref{sec : continuation construction} to build $\overline{w}(s)$ and $B^N(s)$. Using \cite{julia_blanco_D3D6,dominic_dihedral_julia}, we choose $r_0 \bydef 1 \times 10^{-4}$ and obtain
{\small\begin{align}
    \|B^N(s)\|_{\mathcal{B}(X_{6,1.1,con})} \leq 45.28 \text{,}~Y_0 \bydef 4.999 \times 10^{-5} \text{,}~Z_{2}(r_0) \bydef 3957.77   \text{,}~Z_1 \bydef 0.2327,~Z_0 \bydef 1.005 \times 10^{-2}.
    \end{align}}
We prove that these values satisfy Theorem \ref{th: radii polynomial continuation}. 
\end{proof}
\begin{figure}[H]
\centering
 \begin{minipage}{.5\linewidth}
  \centering\epsfig{figure=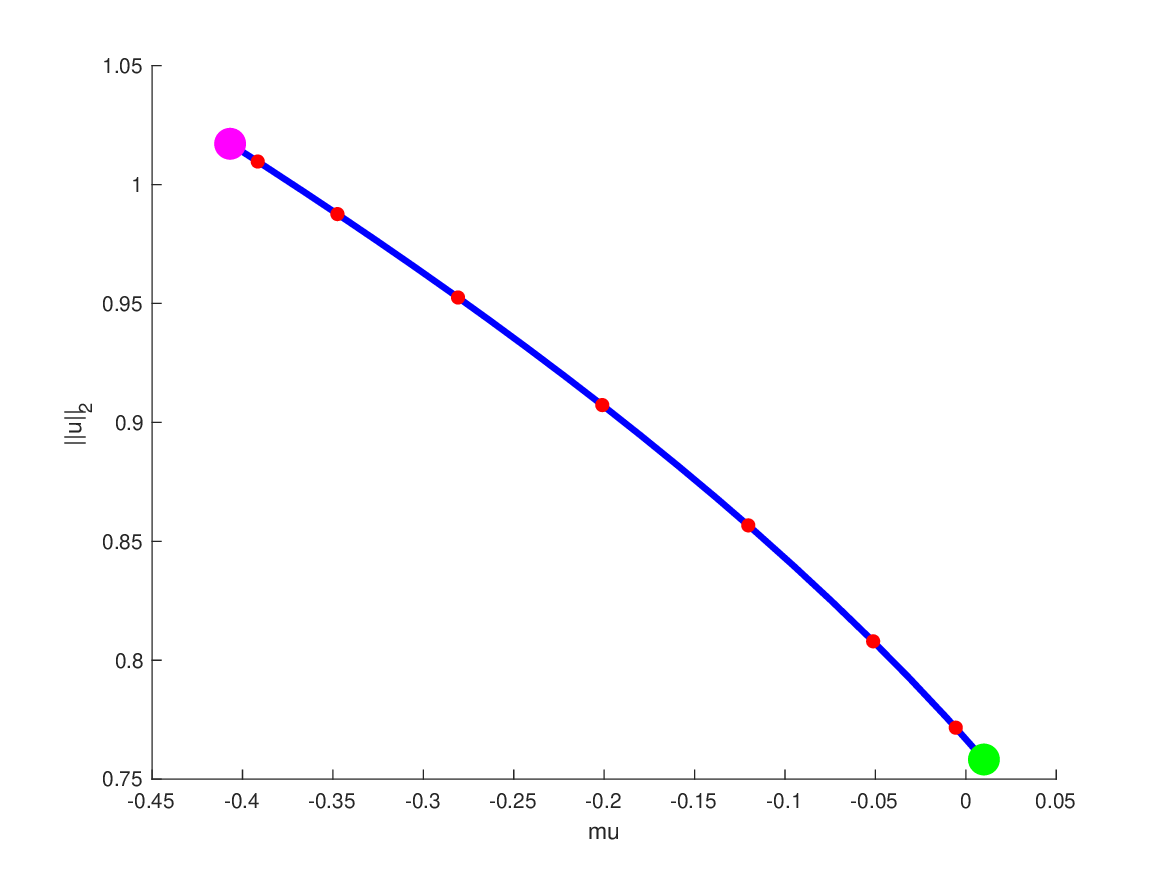,width=\linewidth}
  \end{minipage}%
 \begin{minipage}{.5\linewidth}
  \centering\epsfig{figure=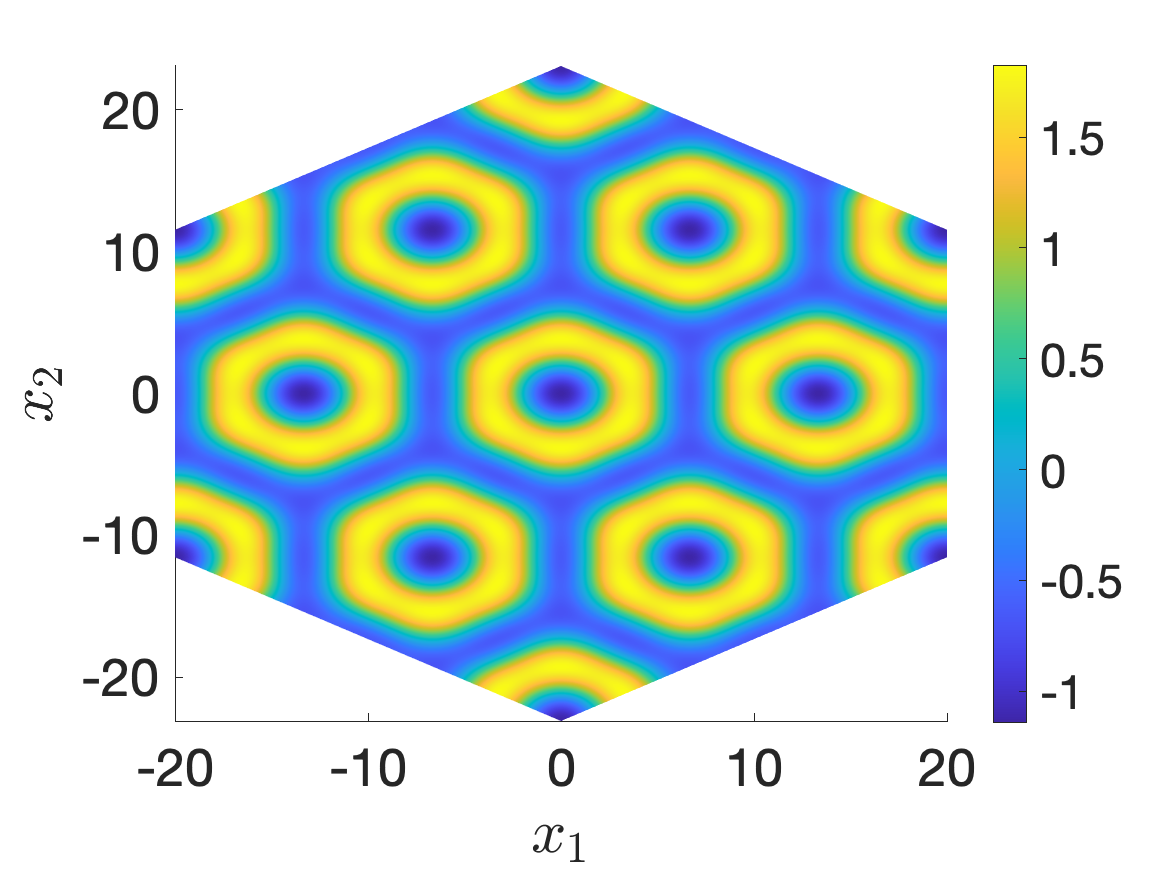,width=\linewidth}
 \end{minipage} 
 \caption{Plot of the branch of approximate solutions proven in Swift Hohenberg with $D_6$-symmetry (left) used in  Theorem \ref{th : hexagon branch}. An approximate solution on the branch (magenta point) when $\mu \approx -0.407$ (right). Note that the green point is plotted in Figure \ref{fig : intro} (left).}\label{fig : th4_1}
 \end{figure}%
\begin{theorem}[\bf The Second Hexagonal Branch]\label{th : hexagon branch 2}
Let $s_{fix} = 0.18, \gamma = 1.6, j = 6, \nu = 1.25$. Moreover, let $r_0 \bydef, 5 \times 10^{-5}$. Then there exists a unique solution $\tilde{w}(s)$ to \eqref{eq : psuedo arclength system} in $\overline{B_{r_0}(\overline{w}(s))} \subset X_{6,1.25,con}$ and we have that $\sup_{s \in [-1,1]} \|\tilde{w}(s)-\overline{w}(s)\|_{X_{6,1.25,con}} \leq r_0$. This corresponds to a branch of periodic solutions on $\varhexagon_0$ to \eqref{eq : swift_hohenberg}. 
\end{theorem}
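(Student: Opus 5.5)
The proof follows the same template as the proofs of Theorems \ref{th : triangle branch 1}, \ref{th : triangle branch 2} and \ref{th : hexagon branch}. It is a direct application of Theorem \ref{th: radii polynomial continuation}, with the bounds $Y_0, Z_0, Z_1, Z_2(r)$ supplied by Lemmas \ref{lem : Y0s}, \ref{lem : Z2s} and \ref{lem : Z1s}, evaluated rigorously with interval arithmetic through \cite{julia_blanco_D3D6,dominic_dihedral_julia}.

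First, we fix the discretization parameters: the Fourier truncation $N$, the lattice size $d$ and the number of Chebyshev coefficients $K$. Since this branch passes through a saddle-node (fold) bifurcation, $K$ will have to be larger than for the first hexagonal branch; a natural first attempt is $d = 10$, $N$ around $40$ to $60$, and $K$ around $15$ to $31$.

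Next, we run the construction of Section \ref{sec : continuation construction} with $s_{fix} = 0.18$ and $\gamma = 1.6$, working in the $D_6$ sequence structure so that symmetry is enforced.
\begin{enumerate}[(i)]
\item Obtain a $D_6$-symmetric seed solution via Newton's method, as in Section \ref{sec : numerical aspects}.
\item Compute the tangent vector and perform pseudo-arclength steps on the Chebyshev grid $s_{grid,p}$.
\item Refine each grid point with Newton's method applied to the augmented system \eqref{eq : psuedo arclength system}.
\item Recover the coefficients of $\overline{w}(s)$ through $\mathcal{F}^{-1}$.
\item Build the matrices $B^N_{grid,p}$ as numerical inverses of $\Pi^N DF(\overline{w}_{grid,p})\Pi^N$, and interpolate them into $B^N(s)$.
\end{enumerate}
The pseudo-arclength parametrization is what makes $DF(\overline{w}(s))$ invertible at the fold, even though the $u$-block $Df$ becomes singular there.

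Then, with $\nu = 1.25$, we evaluate the bounds.
\begin{itemize}
\item $Y_0$ and $Z_0$ come from Lemma \ref{lem : Y0s}, using FFTs of sizes $4K$, $3K$ and $2K$ so that the products in $s$ are represented exactly.
\item $Z_1$ comes from Lemma \ref{lem : Z1s}, using $\overline{v}(s)$, $\overline{V}(s)$ and $\phi(s)$.
\item $Z_2(r_0)$ comes from Lemma \ref{lem : Z2s}, with $\|B^N(s)\|$ bounded by the $ineq$-norm of its Chebyshev coefficients and the tail constant $L_{N,0}$.
\end{itemize}
We then check the two inequalities in \eqref{condition radii polynomial continuation} at $r = r_0 = 5\times 10^{-5}$. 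This yields, for each $s\in[-1,1]$, a unique zero $\tilde w(s)$ of $F$ in the ball, depending smoothly on $s$. Because the zero lies in $X_{6,1.25,con}$, it is $D_6$-symmetric, so Theorem \ref{th : hexagon periodic} shows that every solution on the branch is periodic on $\varhexagon_0$.

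The main obstacle is making $Z_0 + Z_1 < 1$ with enough margin near the fold. There $\|B^N(s)\|$ grows, which inflates $Z_2$ and amplifies the interpolation error in $Z_0$. In addition, the crude uniform estimate $\|\overline{V}_k\|_\infty/\nu^{N+1}$ in $Z_1$, together with the factor $1/L_{N,0}$ (which omits $\mu$), must remain small. If the check fails, I would first increase $K$ (or shrink $s_{fix}$ and split the branch into subsegments) to reduce $Z_0$, and only then increase $N$ or adjust $\nu$ to control $Z_1$.
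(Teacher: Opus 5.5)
Your plan matches the paper's proof. The paper applies Theorem~\ref{th: radii polynomial continuation} directly, with the bounds of Lemmas~\ref{lem : Y0s}, \ref{lem : Z1s} and \ref{lem : Z2s} evaluated rigorously through \cite{julia_blanco_D3D6,dominic_dihedral_julia}, using $N = 20$, $d = 5$, $K = 31$; it obtains $\|B^N(s)\| \leq 36.122$, $Y_0 = 1.854\times 10^{-5}$, $Z_0 = 0.03361$, $Z_1 = 0.2195$ and $Z_2(r_0) = 8235.94$, which satisfy \eqref{condition radii polynomial continuation} at $r_0 = 5\times 10^{-5}$.

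The one thing to correct is your suggested $d = 10$. $d$ is not a discretization parameter: it is the period that defines $\parallelogram_0$ and $\varhexagon_0$, so changing it changes the problem being solved. With $\nu = 1.25$ fixed, it also effectively squares the exponential weight at a given physical wavenumber, whereas the branch the paper proves is the $d = 5$ one.
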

\begin{proof}
Choose $N = 20, d = 5, K = 31$. Then, we perform the full construction described in Section \ref{sec : continuation construction} to build $\overline{w}(s)$ and $B^N(s)$. Using \cite{julia_blanco_D3D6,dominic_dihedral_julia}, we choose $r_0 \bydef 5 \times 10^{-5}$ and obtain
\begin{align}
    \|B^N(s)\|_{\mathcal{B}(X_{6,1.25,con})} \leq 36.122 \text{,}~Y_0 \bydef 1.854 \times 10^{-5} \text{,}~Z_{2}(r_0) \bydef 8235.94   \text{,}~Z_1 \bydef 0.2195,~Z_0 \bydef 0.03361.
    \end{align}
We prove that these values satisfy Theorem \ref{th: radii polynomial continuation}. 
\end{proof}
\begin{figure}[H]
\centering
 \begin{minipage}{.33\linewidth}
  \centering\epsfig{figure=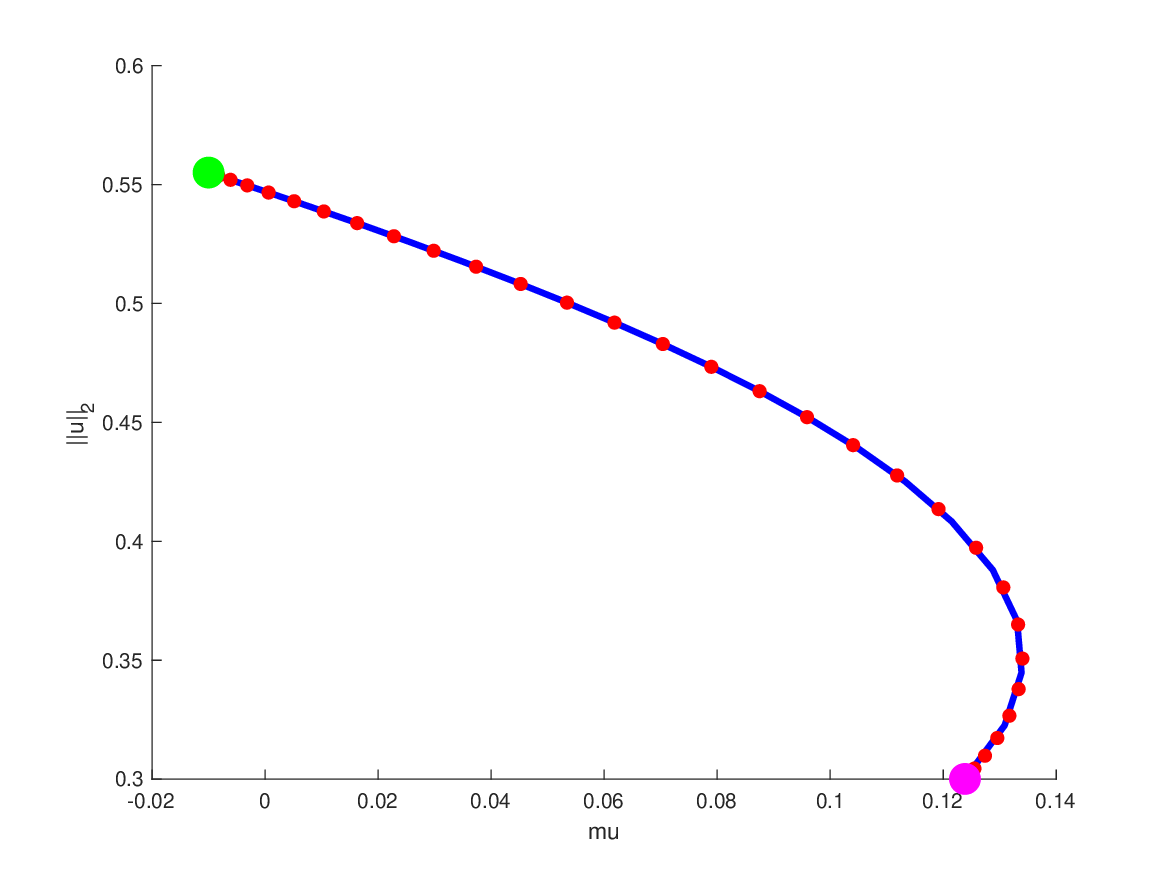,width=\linewidth}
  \end{minipage}%
 \begin{minipage}{.33\linewidth}
  \centering\epsfig{figure=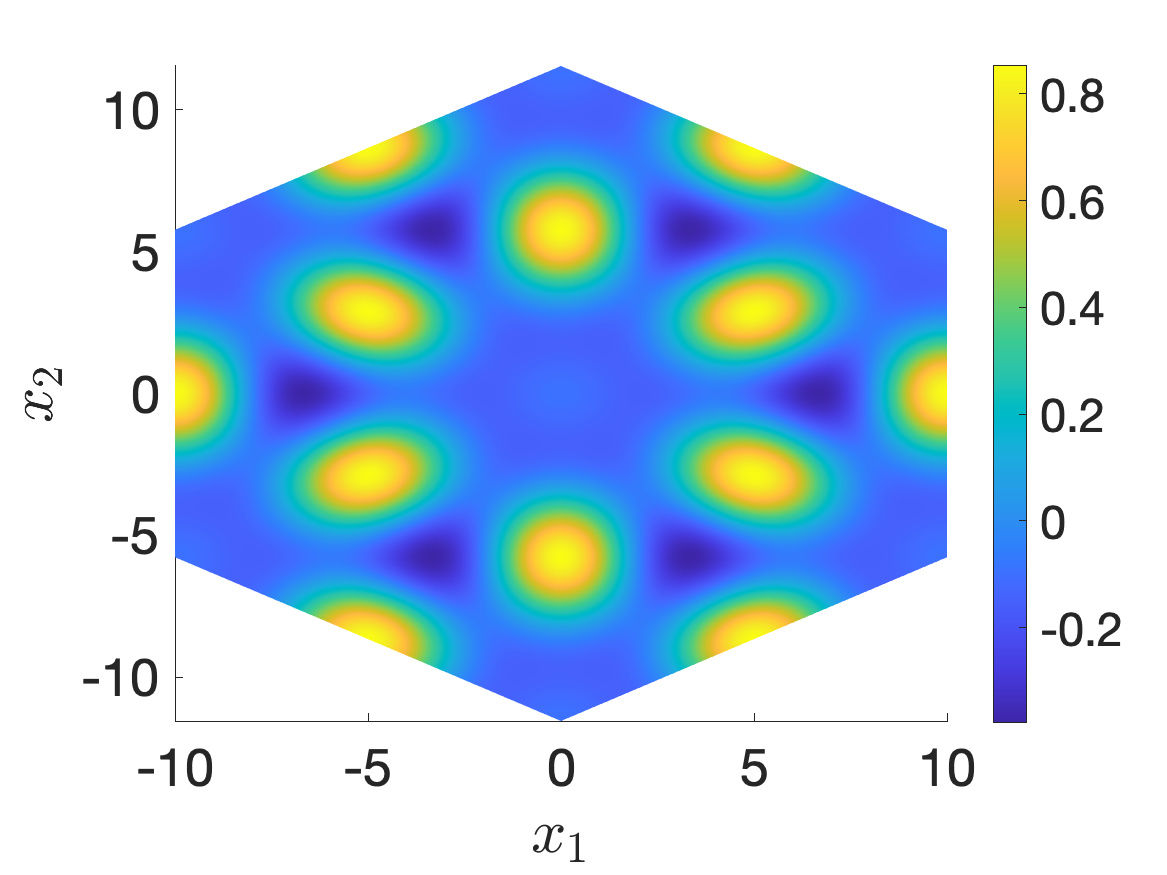,width=\linewidth}
 \end{minipage} 
 \begin{minipage}{.33\linewidth}
  \centering\epsfig{figure=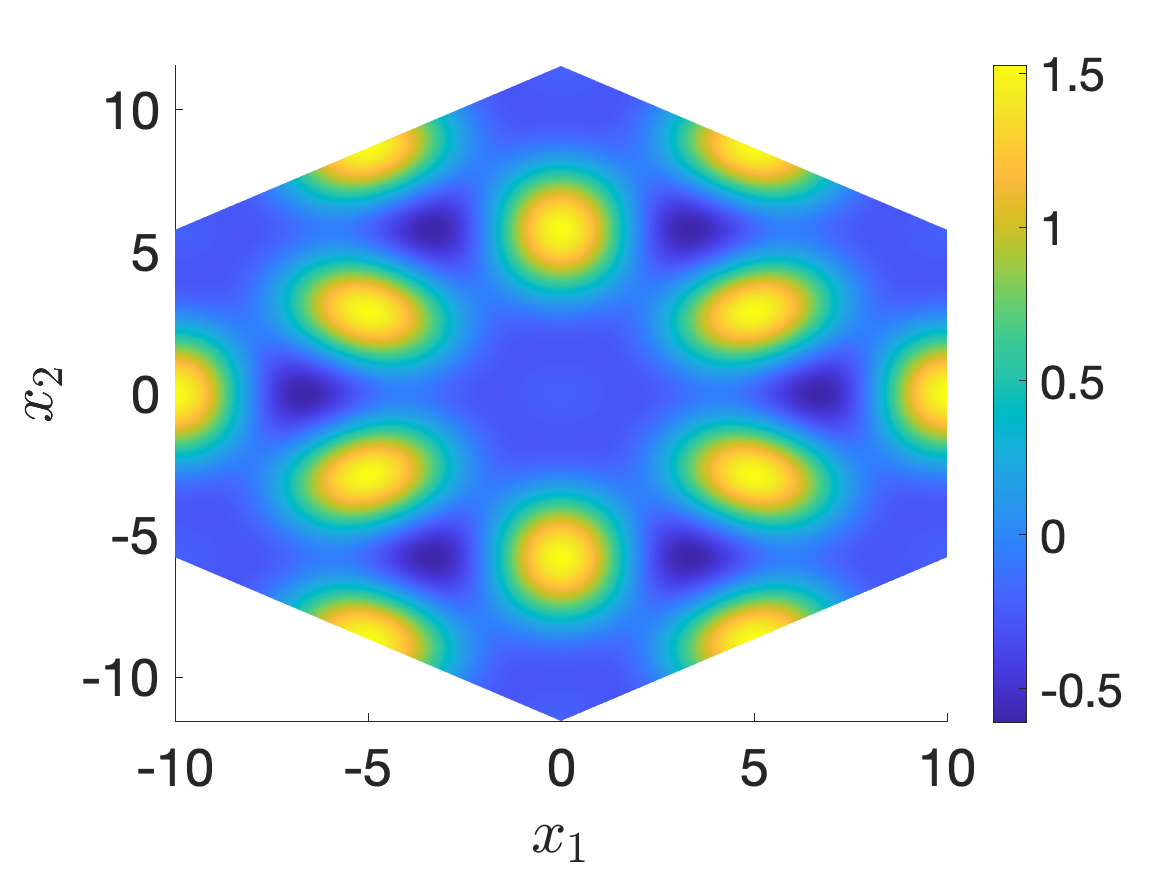,width=\linewidth}
 \end{minipage} 
 \caption{Plot of the branch of approximate solutions proven in Swift Hohenberg with $D_6$-symmetry (left) used in  Theorem \ref{th : hexagon branch 2}. An approximate solution on the branch (magenta point) when $\mu \approx 0.12385$ (center). An approximate solution on the branch (green point) when $\mu \approx -0.01$ (right).}\label{fig : th4_2}
 \end{figure}%
\begin{theorem}[\bf The Third Hexagonal Branch]\label{th : hexagon branch 3}
Let $s_{fix} = 0.23, \gamma = 1.6, j = 6, \nu = 1.25$. Moreover, let $r_0 \bydef 3 \times 10^{-5}$. Then there exists a unique solution $\tilde{w}(s)$ to \eqref{eq : psuedo arclength system} in $\overline{B_{r_0}(\overline{w}(s))} \subset X_{6,1.25,con}$ and we have that $\sup_{s \in [-1,1]} \|\tilde{w}(s)-\overline{w}(s)\|_{X_{6,1.25,con}} \leq r_0$. This corresponds to a branch of periodic solutions on $\varhexagon_0$ to \eqref{eq : swift_hohenberg}. 
\end{theorem}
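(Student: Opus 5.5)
The proof will follow the same template as Theorems \ref{th : hexagon branch} and \ref{th : hexagon branch 2}: verify the hypotheses of Theorem \ref{th: radii polynomial continuation} with explicit, computer-evaluated bounds, then transfer the symmetry and periodicity via Theorem \ref{th : hexagon periodic}.

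First, build the approximate branch. Starting from a numerically converged $D_6$-symmetric approximate solution of \eqref{eq : swift_hohenberg} with $\gamma = 1.6$, $d = 5$, I would run pseudo-arclength continuation over the arclength $s_{fix} = 0.23$. I expect to take $N = 20$ and $K = 31$, as in Theorem \ref{th : hexagon branch 2}, since this branch also passes through a fold. The continuation is done on the Chebyshev grid of Section \ref{sec : continuation construction}, and the grid data is mapped to Chebyshev coefficients by $\mathcal{F}^{-1}$ to obtain $\overline{w}(s)$. At each grid point I then form $B^{\dagger}_{grid,p}$ and $B^N_{grid,p} \approx (\Pi^N DF(\overline{w}_{grid,p})\Pi^N)^{-1}$ in the dihedral.jl $D_6$ sequence structure, and interpolate these to get $B^N(s)$. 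Because every object lives on $\mathcal{Z}_{\mathrm{red}}(D_6)$, the $D_6$ symmetry is built into the whole construction.

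Second, evaluate the bounds with interval arithmetic, with $\nu = 1.25$:
\begin{itemize}
\item $Y_0$ and $Z_0$ from Lemma \ref{lem : Y0s}, using FFT sizes $4K$, $3K$ and $2K$ as prescribed there;
\item $Z_1$ from Lemma \ref{lem : Z1s}, using $\overline{v}(s)$, its truncation $\overline{V}(s)$ and the uniform vector $\phi(s)$;
\item $Z_2(r_0)$ from Lemma \ref{lem : Z2s}, using $\|B^N(s)\|_{\mathcal{B}(X_{6,1.25,con})}$ and $(2K+1)/L_{N,0}$.
\end{itemize}
I would then check that $r_0 = 3\times 10^{-5}$ satisfies both inequalities in \eqref{condition radii polynomial continuation}. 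This gives a unique smooth $\tilde{w}(s)$ in $\overline{B_{r_0}(\overline{w}(s))}$.

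Third, transfer the symmetry. Since $X_{6,1.25,con}$ consists only of $D_6$-symmetric sequences, Lemma \ref{lem : G-fourier series} shows that each $\tilde{u}(s)$ is $D_6$-symmetric. Theorem \ref{th : hexagon periodic} then shows that each $\tilde{u}(s)$ is periodic on $\varhexagon_0$, so the branch consists of such solutions of \eqref{eq : swift_hohenberg}.

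The main obstacle is the fold. Near the turning point, $\Pi^N DF$ in the augmented system is well-conditioned, but $\|B^N(s)\|$ grows, which inflates $Z_2$. At the same time $Z_0$ and $Z_1$ must stay well below one despite the Chebyshev interpolation error in $B^N(s)$. If the inequalities fail, I would first increase $K$ or the number of grid points to reduce $Z_0$. If that is not enough, I would increase $N$ to reduce the tail terms in $Z_1$, or split the arclength interval into subintervals and prove each separately.
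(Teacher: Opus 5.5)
Your proposal follows the paper's proof. You build $\overline{w}(s)$ and $B^N(s)$ via the construction of Section~\ref{sec : continuation construction}, evaluate $Y_0,Z_0,Z_1,Z_2(r_0)$ from Lemmas~\ref{lem : Y0s}, \ref{lem : Z1s} and \ref{lem : Z2s} in interval arithmetic, and check both radii-polynomial conditions at $r_0=3\times10^{-5}$; the $D_6$ symmetry, and hence periodicity on $\varhexagon_0$, comes from working in $X_{6,1.25,con}$. The only discrepancy is your parameter guess: the paper uses $N=20$, $d=5$ but $K=15$ rather than $31$, obtaining $\|B^N(s)\|\le 44.04$, $Y_0=1.49\times10^{-6}$, $Z_0=0.1857$, $Z_1=0.4403$ and $Z_2(r_0)=6065.28$, which satisfy both inequalities (e.g.\ $Z_0+Z_1+Z_2(r_0)r_0\approx 0.81<1$).
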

\begin{proof}
Choose $N = 20, d = 5, K = 15$. Then, we perform the full construction described in Section \ref{sec : continuation construction} to build $\overline{w}(s)$ and $B^N(s)$. Using \cite{julia_blanco_D3D6,dominic_dihedral_julia}, we choose $r_0 \bydef 3 \times 10^{-5}$ and obtain
\begin{align}
    \|B^N(s)\|_{\mathcal{B}(X_{6,1.25,con})} \leq 44.04 \text{,}~Y_0 \bydef 1.49 \times 10^{-6} \text{,}~Z_{2}(r_0) \bydef 6065.28   \text{,}~Z_1 \bydef 0.4403,~Z_0 \bydef 0.1857.
    \end{align}
We prove that these values satisfy Theorem \ref{th: radii polynomial continuation}. 
\end{proof}
\begin{figure}[H]
\centering
 \begin{minipage}{.25\linewidth}
  \centering\epsfig{figure=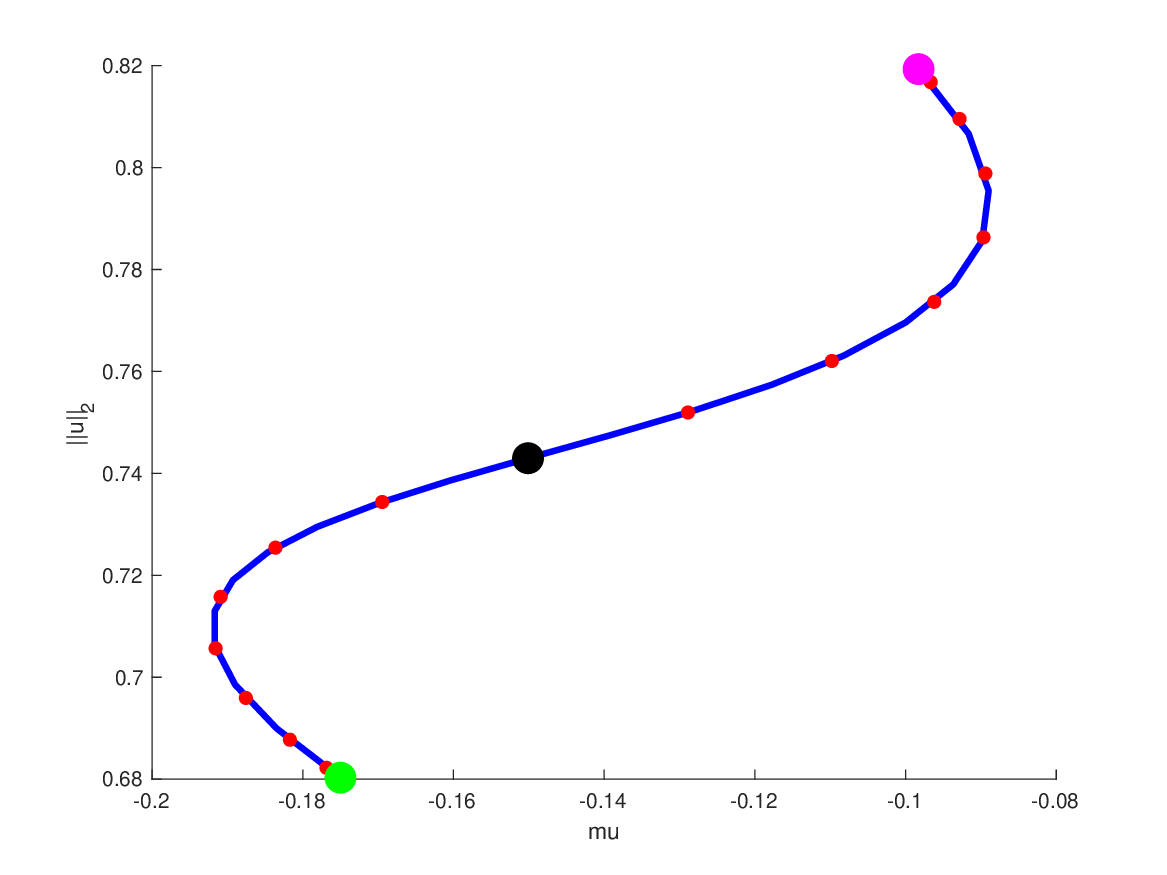,width=\linewidth}
  \end{minipage}%
 \begin{minipage}{.24\linewidth}
  \centering\epsfig{figure=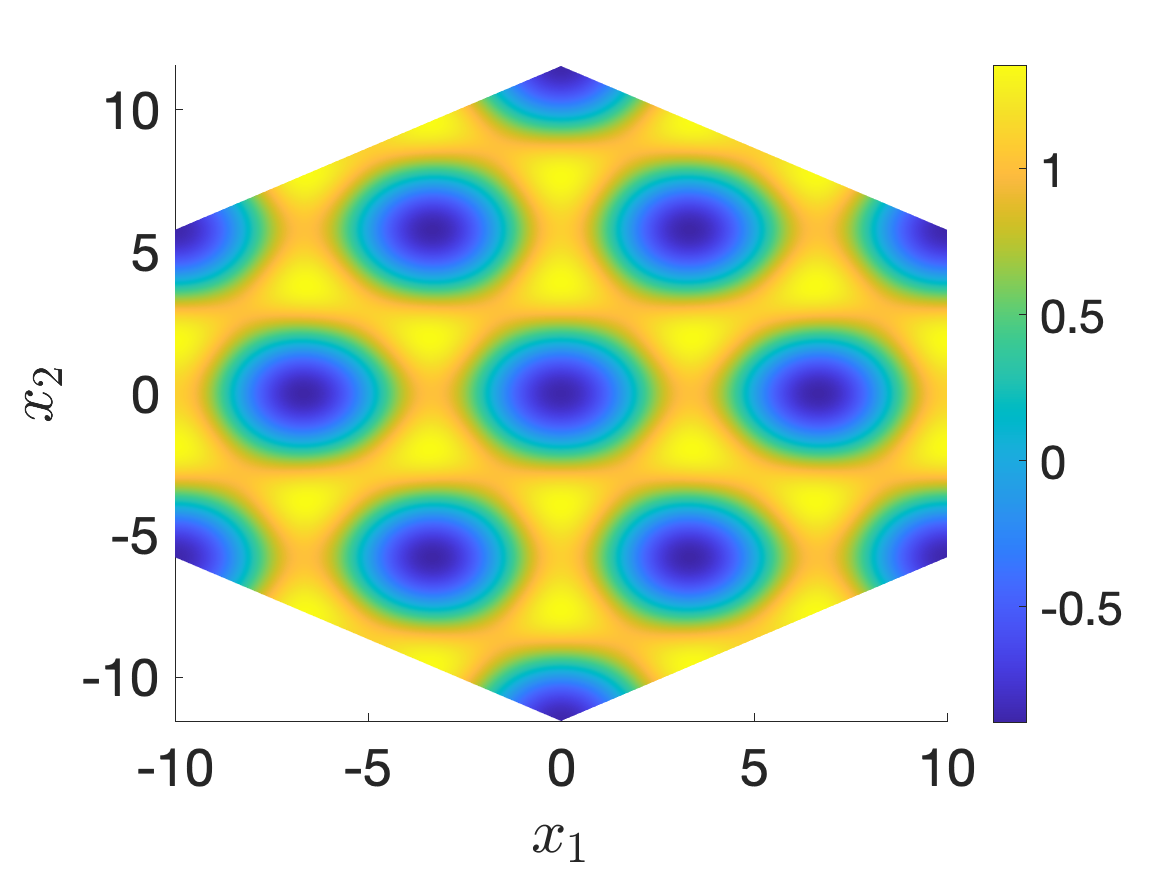,width=\linewidth}
 \end{minipage} 
 \begin{minipage}{.24\linewidth}
  \centering\epsfig{figure=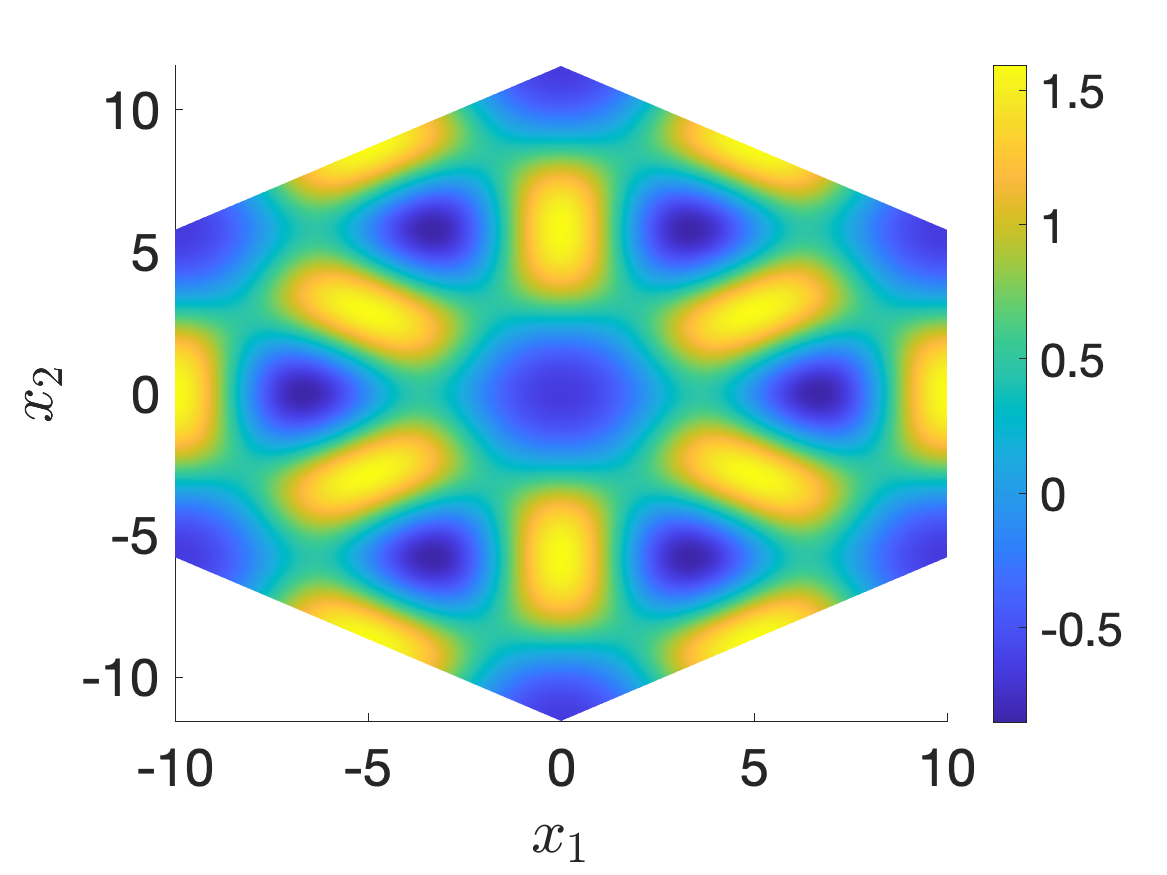,width=\linewidth}
 \end{minipage}
 \begin{minipage}{.24\linewidth}
  \centering\epsfig{figure=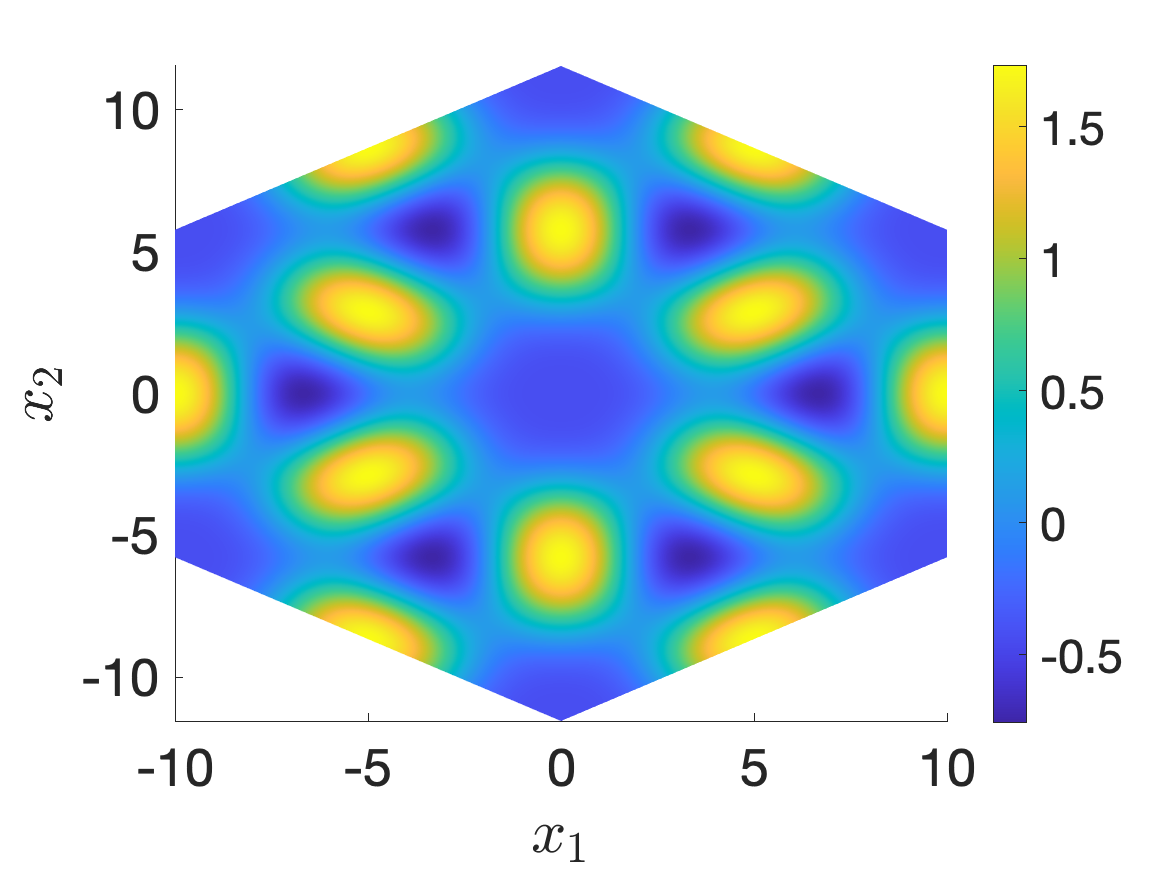,width=\linewidth}
 \end{minipage}
 \caption{Plot of the branch of approximate solutions proven in Swift Hohenberg with $D_6$-symmetry (left) used in  Theorem \ref{th : hexagon branch 3}. An approximate solution on the branch (magenta point) when $\mu \approx -0.0983$ (middle left). An approximate solution on the branch (black point) when $\mu \approx -0.1501$ (middle right). An approximate solution on the branch (green point) when $\mu \approx -0.175$ (right).}\label{fig : th4_3}
 \end{figure}%
\section{Conclusion}\label{sec : conclusion}
In this paper, we provided an approach for rigorously proving the existence, local uniqueness, and $D_j$ for $j = 3,6$ symmetry of periodic solutions to the 2D Swift Hohenberg PDE. We provided the necessary tools via \cite{dominic_dihedral_julia} for performing the computer assisted proof using \cite{julia_olivier}. Additionally, for $D_3$, we showed that we can generate the periodic tiling via two triangles invariant under $D_3$-symmetry. For $D_6$, we showed that the same result can be obtained with only one hexagon leading to a periodic function defined on a hexagon.
\par There are many future works that could build on our approach. One could be to use the reduced set for $D_3$ and $D_6$ to prove localized patterns with those symmetries. This is related to the approach developed by the authors of \cite{symmetry_blanco_cadiot}. In their methodology, it is a requirement that the Fourier series of $u_0$ is on the square lattice. The problematic is that this method requires one to consider the domain $\parallelogram_0$ and extend the function by $0$ on $\mathbb{R}^2$ rather than periodically. Since the domain $\parallelogram_0$ itself is not invariant under $D_3$ nor $D_6$-symmetry, one cannot construct an approximate solution of a localized pattern this way. This means the authors cannot utilize the useful symmetry reduction that hexagonal lattice symmetries provide, which would be particularly helpful when proving localized patterns as they are often more computationally intensive. Furthermore, it was shown by the authors of \cite{sh_cadiot} that along with the localized pattern, we obtain a proof of a branch of periodic solutions converging to the localized pattern. Currently, this branch of periodic solutions would be on the square due to the fact that the method developed in \cite{symmetry_blanco_cadiot} requires a Fourier series posed on the square lattice. Since hexagons tile the plane, and we only need one hexagon to generate the periodic tiling, taking the limit as the period tends to infinity, one can expect related localized patterns. This would make it of great interest to combine the approaches, and we consider it future work to obtain a branch of periodic solutions on a hexagon converging to a $D_6$ localized pattern. 
\par Another possible interest is computational. In its current form, the sequence structures for $D_3$ and $D_6$ have many indices which are of zero value. Since many operations, particularly convolution, are done by looping over all of $I^N$, there are many extra loop iterations being performed that are unnecessary. The ability to improve this would make the use of \cite{dominic_dihedral_julia} of higher interest for more computationally intensive problems.
\bibliographystyle{abbrv}
\bibliography{biblio}
\end{document}